\definecolor{mygreen}{HTML}{278444}
\theoremstyle{plain}
\newtheorem{theorem}[equation]{Theorem}
\newtheorem{proposition}[equation]{Proposition}
\newtheorem{lemma}[equation]{Lemma}
\newtheorem{corollary}[equation]{Corollary}
\theoremstyle{definition}
\newtheorem{definition}[equation]{Definition}
\newtheorem{remark}[equation]{Remark}
\newtheorem{example}[equation]{Example}
\newtheorem{notation}[equation]{Notation}
\newtheorem{question}[equation]{Question}
\newtheorem*{ack}{Acknowledgments}
\newcommand{\co}{\colon \thinspace}
\newcommand{\Q}{{\mathbb Q}}
\newcommand{\R}{{\mathbb R}}
\newcommand{\Z}{{\mathbb Z}}
\newcommand{\Hyp}{{\mathbb H}}
\newcommand{\abs}[1]{\lvert {#1} \rvert}
\renewcommand{\leq}{\leqslant}
\renewcommand{\geq}{\geqslant}
\renewcommand{\epsilon}{\varepsilon}
\renewcommand{\phi}{\varphi}
\newcommand{\N}{{\mathbb N}}
\DeclareMathOperator{\id}{id}
\DeclareMathOperator{\Cay}{Cay}
\DeclareMathOperator{\Aut}{Aut}
\DeclareMathOperator{\divideby}{div}
\DeclareMathOperator{\lcm}{lcm}
\DeclareMathOperator{\image}{Im}
\DeclareMathOperator{\Vol}{Vol}
\DeclareMathOperator{\cells}{cells}
\DeclareMathOperator{\Out}{Out}
\newenvironment{pict}[2]%
	{\setlength{\unitlength}{1mm}
	\begin{center}
	\begin{picture}(#1,#2)
	\scriptsize
%
%
}%
	{\end{picture}
	\end{center}

	\noindent}
\newcommand{\zindex}[3]{\put(#1,#2){\makebox(0,0){${#3}$}}}
\numberwithin{equation}{section}
  \def\tagform@#1{\maketag@@@{%
   \textbf{(\ignorespaces#1\unskip\@@italiccorr)}}}%
   \renewcommand{\eqref}[1]{\textup{\maketag@@@{(\ignorespaces%
        {\ref{#1}}\unskip\@@italiccorr)}}}
\begin{document}

\title[Incommensurable lattices]{Incommensurable lattices in
  Baumslag--Solitar complexes}

\author{Max Forester}
\address{Mathematics Department\\
        University of Oklahoma\\
        Norman, OK 73019\\
        USA}

\email{mf@ou.edu}

\begin{abstract}
This paper concerns locally finite $2$--complexes $X_{m,n}$ which are
combinatorial models for the Baumslag--Solitar groups $BS(m,n)$. We
show that, in many cases, the locally compact group $\Aut(X_{m,n})$
contains incommensurable uniform lattices. The lattices we construct
also admit isomorphic Cayley graphs and are finitely presented,
torsion-free, and coherent. 
\end{abstract}


\maketitle

\thispagestyle{empty}

\section{Introduction}

In this paper we study locally finite CW complexes $X_{m,n}$ which
are combinatorial models for the Baumslag--Solitar groups
$BS(m,n)$. The group $BS(m,n)$ acts freely and cocompactly on
$X_{m,n}$ with quotient space $Z_{m,n}$ homeomorphic to the standard
presentation $2$--complex of $BS(m,n)$. We wish to 
understand lattices in the locally compact group $\Aut(X_{m,n})$ of
combinatorial automorphisms. $BS(m,n)$ is one such
lattice but we are interested in others. 

Bass and Kulkarni have shown that if $T$ is a locally finite tree then 
any two uniform lattices in $\Aut(T)$ are commensurable up to conjugacy
\cite{basskulkarni}, so in a sense there is only ``one'' lattice. On the other
hand, if $T_1$ and $T_2$ are non-isomorphic locally finite trees then
$\Aut(T_1 \times T_2) \cong \Aut(T_1) \times \Aut(T_2)$ has a very
rich lattice theory, including the celebrated examples of Burger and
Mozes \cite{burgermozes} and Wise \cite{wisethesis,wisecsc}. 

The group $\Aut(X_{m,n})$ has some similarities with both $\Aut(T)$
and $\Aut(T_1) \times \Aut(T_2)$, and can be viewed as lying
intermediate between the two. Let $T_{m,n}$ denote the regular
directed tree in which every vertex has $m$ incoming and $n$ outgoing
edges. This is the usual Bass-Serre tree for $BS(m,n)$. The complex
$X_{m,n}$ is homeomorphic to $T_{m,n} \times \R$, with projection
inducing a homomorphism $\pi_*\co \Aut(X_{m,n}) \to
\Aut(T_{m,n})$. This homomorphism is continuous and \emph{injective}
when $m\not= n$ (though not an embedding; see Remark
\ref{notembedding}). Thus, while an element of $\Aut(T_1 \times T_2)$
is an arbitrary pair of tree automorphisms, an element of
$\Aut(X_{m,n})$ is just a single one.

Nevertheless, our main result shows that for many values of $m$ and
$n$, $\Aut(X_{m,n})$ is large enough to accommodate
incommensurable uniform lattices, just as $\Aut(T_1 \times T_2)$
does. Our main construction appears in Theorem \ref{mainthm}, restated
as follows. 

\begin{theorem}\label{main1}
If\/ $\gcd(k,n) \not= 1$ then $\Aut(X_{k,kn})$ contains uniform lattices
$G_1, G_2$ that are not abstractly commensurable. 
\end{theorem}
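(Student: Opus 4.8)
The plan is to exhibit the two lattices explicitly as fundamental groups of finite complexes that are locally modeled on $X_{k,kn}$. Recall that $BS(m,n)$ itself is the fundamental group of $Z_{m,n}$, and $X_{m,n}$ is its universal cover; a uniform lattice in $\Aut(X_{k,kn})$ is the same thing as $\pi_1$ of a finite complex $Y$ equipped with a locally isometric (combinatorial) covering map $Y \to Z$, where $Z$ is some fixed ``reference orbifold'' quotient of $X_{k,kn}$ — concretely, a complex of groups structure on a finite graph of groups whose Bass--Serre-type development is $X_{k,kn}$. So first I would set up the correct notion of ``$X_{k,kn}$-complex'': a graph-of-spaces (or graph-of-groups) presentation in which every vertex group is infinite cyclic, every edge group is infinite cyclic, and the two inclusions of an edge group $\Z$ into its endpoint vertex groups $\Z$ have indices whose ratio is $k/(kn)=1/n$, so that locally around each vertex the link is the same as in $X_{k,kn}$ (an $(k,kn)$-biregular structure on the directed tree $T_{k,kn}$). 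Any torsion-free such graph of groups with finite underlying graph gives a uniform lattice in $\Aut(X_{k,kn})$, and conversely.

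Next I would construct two specific such graphs of groups $G_1,G_2$. The simplest is $BS(k,kn)$ itself: one vertex, one edge, inclusions of $\Z$ with indices $k$ and $kn$. For a second, incommensurable example I would use the hypothesis $\gcd(k,n)\neq 1$: write $d=\gcd(k,n)>1$. The idea is to take a graph of groups on, say, a single vertex with several loops, or a small finite graph, where the edge inclusions have indices that still produce a locally $(k,kn)$-biregular development but whose ``total'' behavior differs. A clean choice: let $G_2$ be the HNN-type or amalgam-type group built so that the collection of edge-index pairs at the single vertex, counted with multiplicity, is some multiset $\{(a_i,b_i)\}$ with $\sum \text{(incoming slots)} = k$, $\sum\text{(outgoing slots)} = kn$, and $b_i/a_i$ all equal to $n$ — for instance splitting one edge of multiplicity into two edges using the factor $d$. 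Because $d \mid k$ and $d\mid n$, one can genuinely refine the single $(k,kn)$ edge into $d$ parallel edges each contributing $(k/d, kn/d)$... wait, more carefully: one wants the \emph{ascending/descending} link sizes to remain $k$ and $kn$ at every vertex of the development, so the correct move is to pass to a different finite quotient of $X_{k,kn}$ altogether, e.g.\ one whose underlying graph is a circle of length $\ell$ with edge indices alternating in a pattern governed by $d$. I would then compute the covolumes (Bass--Serre / Euler-characteristic-type invariants: $\sum 1/\abs{\text{vertex group acting}}$ type sums are all zero here since groups are infinite, so instead I would use the \emph{deficiency}, the modular homomorphism, or the Euler characteristic of the finite complex) and arrange $G_1,G_2$ to be distinguished by some commensurability invariant.

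The key obstacle — and the real content of the theorem — is showing $G_1$ and $G_2$ are \emph{not abstractly commensurable}, not merely non-conjugate inside $\Aut(X_{k,kn})$. For groups in the $BS$ family and their relatives, natural commensurability invariants come from: (i) the structure of the (unique, when $m\neq n$) abelian JSJ / the elementary flats, (ii) the set of ``elementary Baumslag--Solitar subgroups'' $BS(p,q)$ that embed, up to the equivalence $p/q$, and (iii) the modular homomorphism $\Delta\co G \to \Q^{\times}$ and its image, which is a commensurability invariant up to the obvious equivalence (replacing $G$ by a finite-index subgroup changes $\Delta$ in a controlled way). My plan is to compute $\image(\Delta_{G_i})$ for both groups and show that for the carefully chosen $G_2$ this image — or better, the pair consisting of the image together with the minimal ``denominators'' realized by actual edge groups in the JSJ — cannot be matched under commensurability because $\gcd(k,n)\neq 1$ forces $G_2$ to have an edge-index ratio, or a modular value, not commensurable-equivalent to anything in $G_1=BS(k,kn)$. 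I expect the bulk of the proof to be (a) pinning down the right invariant so that it is genuinely commensurability-invariant (this needs the $m\neq n$ injectivity $\pi_*$ and the canonical JSJ of generalized Baumslag--Solitar groups), and (b) verifying the arithmetic inequality it produces under the hypothesis $\gcd(k,n)\neq1$. The construction of $G_2$ and the verification that it is finitely presented, torsion-free, and coherent (it is a finite graph of $\Z$'s, hence a GBS group, hence all of these automatically) should be comparatively routine, as should producing the common Cayley graph by arranging $G_1,G_2$ to act on $X_{k,kn}$ with a common finite quotient graph, so that a theorem of Leighton-type (uniform tree lattices with the same local data have a common cover) applies.
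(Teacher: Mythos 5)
Your proposal stops short of the two things that constitute the actual content of this theorem. First, the second lattice is never actually constructed: the passage ``split one edge using the factor $d$\dots wait, more carefully\dots'' ends without a definite labeled graph, and the local criterion you state (each edge having inclusion indices with ratio $1/n$) is not the condition that makes the development equal to $X_{k,kn}$ --- for instance $BS(1,n)$ satisfies it but acts on $X_{1,n}$, not $X_{k,kn}$. The correct criterion (Proposition \ref{latticeprop}, and in general Theorem \ref{gbslatticechar}) is that at every vertex the labels of outgoing and incoming edges sum to $kn$ and $k$ respectively, and that every edge produces $(m/k,n/k)$--strips; the paper's $G_2$ is a specific bipartite labeled graph with edges $(k,kn)$ and $k$ edges $(1,n)$ verifying these conditions.

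Second, and more seriously, the incommensurability argument is deferred to ``some commensurability invariant,'' and the candidates you name would not close the gap. The image of the modular homomorphism is not commensurability-invariant as stated (it already changes from $\langle n\rangle$ to $\langle n^2\rangle$ when passing from $BS(k,kn)$ to its index--$2$ subgroup $G_1$), and in any case it fails to separate the paper's examples: both $G_1$ and $G_2$ have modular image generated by $n^2$. Likewise, edge-index data of a GBS (``JSJ'') decomposition is not invariant under elementary deformations or passage to finite-index subgroups, so ``minimal denominators realized by edge groups'' is not a usable invariant without substantial further work. Nowhere does your outline show where $\gcd(k,n)\neq 1$ would enter. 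The paper's proof hinges on a genuinely new invariant, the depth profile $\mathcal{D}(G,V)=\{[V:V\cap V^g] \mid q(g)=\pm 1\}$ considered up to the truncated-division equivalence, together with its invariance under finite index and change of $V$ (Theorem \ref{invariant}), the computation $\mathcal{D}(G_1,V_1)=\{n^i\}$, and a computation for $G_2$ carried out on a finite-index subgroup $H_2$ obtained from an explicit admissible branched covering, giving $\{n^{2i}\}\cup\{n^{2i}c\}$ with $c=n/\gcd(k,n)$. The groups are then distinguished by the eventual ratio sequences $(n,n,\dots)$ versus $(c,n^2/c,c,\dots)$, which differ precisely because $\gcd(k,n)\neq 1$ forces $c\neq n$. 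None of this machinery, nor any workable substitute for it, appears in your plan, so the proposal has a genuine gap at the heart of the theorem.
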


A similar statement holds for other cases of $\Aut(X_{k,kn})$, by a
different construction. Combining Theorems \ref{g3thm} and \ref{g4thm}
we have:

\begin{theorem}\label{main2}
If either 
\begin{enumerate}
\item $n$ has a non-trivial divisor $p\not= n$ such that $p < k$, or
\item $n < k$ and $k \equiv 1 \mod n$ 
\end{enumerate}
then\/ $\Aut(X_{k,kn})$ contains uniform lattices that are not abstractly
commensurable. 
\end{theorem}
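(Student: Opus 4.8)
Since the statement is the union of Theorems~\ref{g3thm} and~\ref{g4thm}, my plan is to reduce at once to proving those two results separately --- one supplies the pair of lattices under hypothesis~(1), the other under hypothesis~(2) --- with the two proofs sharing a common setup that I would establish first. That setup is a dictionary between torsion-free uniform lattices in $\Aut(X_{k,kn})$ and finite graphs of infinite cyclic groups: such a lattice is, up to conjugacy, the fundamental group of a finite oriented graph of groups $\Gamma$, all of whose vertex and edge groups are $\Z$, carrying a positive integer on each end of each edge so that at every vertex the labels on the incoming ends sum to $k$ and those on the outgoing ends sum to $kn$ --- together with whatever further local compatibility with the square structure of $X_{k,kn}$ is needed, and with the facts that such $\pi_1(\Gamma)$ are finitely presented, torsion-free, and coherent; all of this should already be available from the earlier sections. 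Given the dictionary, each theorem amounts to (i) writing down a labelled graph $\Gamma$ of the required type, and (ii) showing $\pi_1(\Gamma)$ is not abstractly commensurable to a reference lattice, either $BS(k,kn)$ itself or the output of the companion construction.

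For step (i) under hypothesis~(1) I would use a non-trivial divisor $p \mid n$ with $p < k$ to build $\Gamma$ with a deliberate asymmetry: the inequality $p < k$ is exactly what lets an edge-end labelled $p$ sit at a vertex whose incoming labels must total $k$ (the leftover $k-p$ being carried by auxiliary edges), while $p \mid n$ is what keeps the complementary outgoing labels integral and the modulus equal to $n$; the construction is arranged so that $\Gamma$ cannot be simplified, through elementary deformations of graphs of $\Z$'s, to anything resembling the one-loop graph of $BS(k,kn)$. Under hypothesis~(2), writing $k = qn+1$ with $q \geq 1$, I would instead place at one vertex an ascending loop of $BS(1,n)$ type together with $q$ further loops or edges that absorb the remaining $k-1 = qn$ incoming units and the compensating outgoing units; the congruence $k \equiv 1 \bmod n$ is precisely what makes this bookkeeping land on positive integers meeting the two vertex-sum conditions.

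Step (ii), the incommensurability, is the substance of both theorems and the step I expect to be the main obstacle. It cannot be read off from coarse geometry, since the lattices even have isomorphic Cayley graphs, nor from the modular homomorphism, whose image for every lattice in $\Aut(X_{k,kn})$ is constrained by the local structure of $T_{k,kn}$ to be commensurable to $\langle n\rangle \leq \Q^{>0}$. The plan is to proceed in two moves. First, invoke rigidity of non-unimodular generalized Baumslag--Solitar groups (non-unimodular here because $k \neq kn$, so the modular homomorphism is nontrivial): their Bass--Serre trees are canonical up to elementary deformation, so an abstract isomorphism between finite-index subgroups of the two lattices would upgrade to a relation between their quotient graphs of groups, forcing those two graphs to admit a common finite covering in Bass's sense. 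Second, refute that common covering: it would be a finite graph of $\Z$'s mapping with some degrees onto both copies of $\Gamma$, and the asymmetric label data built into the constructions imposes Diophantine constraints --- on the covering degrees, and on how the incoming- and outgoing-label sums multiply along the covering --- that admit no solution. This last refutation is a failure-of-Leighton's-theorem phenomenon for graphs of groups, engineered by the hypotheses of the theorem; isolating the correct covering-invariant of the labelled graphs and verifying that it genuinely obstructs is the delicate, case-specific core of each of Theorems~\ref{g3thm} and~\ref{g4thm}. Once both are in hand, Theorem~\ref{main2} follows immediately by pairing, for any admissible $k$ and $n$, the constructed lattice with its reference lattice in whichever case applies.
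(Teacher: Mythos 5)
Your step (i) is essentially the paper's construction (the lattices $G_3$ and $G_4$ are produced exactly by writing down labelled graphs meeting the conditions of Proposition~\ref{latticeprop}), but the substance of Theorems~\ref{g3thm} and~\ref{g4thm} is your step (ii), and there your plan has a genuine gap. You propose to upgrade an abstract commensuration to a \emph{common finite covering, in Bass's sense}, of the two labelled graphs, and then to rule that out by Diophantine constraints. This reduction is not available. Finite-index subgroups of a GBS group correspond to admissible \emph{branched} coverings of the labelled graph (Proposition~\ref{admissibleprop}), in which the labels themselves change, not to ordinary coverings of graphs of groups; and an isomorphism between finite-index subgroups of the two lattices only tells you that the two induced labelled graphs define the same group, i.e.\ are related by an \emph{elementary deformation} (collapses, expansions, slides), which can alter the underlying graph and label data drastically. ``Canonical up to elementary deformation'' is therefore far too weak to force a common covering: the isomorphism problem for labelled graphs is open in general, and the groups at hand are exactly the flexible ones --- $k$ divides $kn$, so many slides are available, and Figure~\ref{easycase} together with Remark~\ref{envelope} exhibits precisely the kind of unexpected coincidence your rigidity step would have to exclude. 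Non-unimodularity does not rescue the argument; the stronger rigidity theorems for GBS groups need hypotheses (no nontrivial integral moduli, slide-freeness, etc.) that fail here.

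Moreover you explicitly leave the key object unspecified (``isolating the correct covering-invariant \dots is the delicate, case-specific core''), and that object is the entire content of the two theorems. The paper's solution is the depth profile $\mathcal{D}(G,V)=\{\,[V:V\cap V^g] \mid q(g)=\pm1\,\}$ of Section~\ref{invariantsec}, proved in Theorem~\ref{invariant} to be a commensurability invariant up to the truncated-division equivalence --- an invariant insensitive both to passing to finite index and to the choice of GBS tree, which is exactly what the deformation flexibility demands. One then deforms $G_3$ and $G_4$ by a collapse and slides into wedge forms such as $BS(1,n^2)\vee\bigvee BS(1,1)\vee\bigvee BS(n,n)\vee BS(pn,pn)$, computes the profiles via Proposition~\ref{wedgecomputation}, and compares the tails of the successive-ratio sequences (three- or two-periodic in $n,p,n/p$ for $G_3$, constant $n^2$ for $G_4$) with the constant sequence $(n,n,n,\dotsc)$ coming from the reference lattice $G_1<BS(k,kn)$; the hypotheses $p\neq n$, $p<k$, respectively $n<k$ and $k\equiv 1 \bmod n$, are what make the tails disagree. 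Without an invariant of this kind your step (ii), and hence the proof of Theorem~\ref{main2}, does not go through.
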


The lattices in the previous results are all torsion-free. A 
combinatorial characterization of the torsion-free lattices in
$\Aut(X_{m,n})$ is given in Theorem \ref{gbslatticechar}. 

In the other direction, there is just one situation in which we can
say for sure that incommensurable lattices do not exist, namely, when
$\Aut(X_{m,n})$ is discrete. This occurs if and only if
$\gcd(m,n)=1$, by Theorem \ref{discrete}. 

It is perhaps not surprising that the incommensurable lattices of
Theorems \ref{main1} and \ref{main2} lie in $\Aut(X_{m,n})$ for which $m$
divides $n$. This behavior illustrates the large degree of symmetry
enjoyed by these complexes. By the same token, the groups $BS(k,kn)$
are unusually symmetric. When $k,\abs{n}>1$, the groups
$\Aut(BS(k,kn))$ and $\Out(BS(k,kn))$ fail to be finitely generated,
by \cite{collinslevin} (see also \cite{clay} for a newer, more
geometric proof). 

\subsection*{Methods}
Every torsion-free lattice in $\Aut(X_{m,n})$ is a generalized
Baumslag--Solitar group (GBS group). These are the fundamental
groups of graphs of groups in which all edge and vertex groups are
$\Z$. We make use of many of the standard tools for studying these
groups, such as deformations and covering theory for
finite-index subgroups.

The main new tool developed here is a commensurability invariant
called the \emph{depth profile}. It is a subset of $\N$ defined
relative to a choice of elliptic subgroup $V$.
It records the manner in which certain conjugates $V^g$ meet $V$. 
If two GBS groups are commensurable then their depth
profiles are the same, provided both groups contain $V$. To accommodate
changes in $V$, we impose an equivalence relation on the set of
subsets of $\N$. In this way we obtain a well defined 
invariant of GBS groups; see Section \ref{invariantsec}. Note that
this is an invariant of \emph{abstract} commensurability. 

Computing the depth
profile of a GBS group is not completely straightforward and we do it
only under certain assumptions. It becomes helpful to pass to finite
index subgroups in order to meet these assumptions; this is where the
covering theory comes in. 

One case where the depth profile can be computed easily is the case of 
Baumslag--Solitar groups. Using the depth profile we obtain a new and
much simpler proof of one of the main results of \cite{CRKZ}, in which
it is shown that certain Baumslag--Solitar groups are not 
commensurable.
It is precisely the most difficult case of that result 
that is handled easily using the depth profile. 
See Example \ref{bsexample} and Corollary \ref{crkz}. 

Section \ref{examplessec} is the heart of the paper, in which we
present the main examples and compute their depth profiles.
In Section \ref{furthersec} we present the additional examples 
comprising Theorem \ref{main2}. 

In Section \ref{cayleysec} we show that the incommensurable lattices
of Theorems \ref{main1} and \ref{main2} all admit isomorphic Cayley
graphs; see Corollary \ref{cayleycor}. This result is not completely 
obvious since their actions on $X_{m,n}$ are not transitive on
the vertices. However, it is enough that they act on the vertices with
the same orbits, which is what happens here. 

There are many basic questions remaining regarding lattices in
$\Aut(X_{m,n})$, such as determining the pairs $m,n$ for which
incommensurable lattices exist. We have said nothing about lattices
with torsion, either. Also, there is a phenomenon (see Figure
\ref{easycase} and Remark \ref{envelope}) of different groups
$\Aut(X_{m,n})$ and $\Aut(X_{m',n'})$ admitting isomorphic lattices in
an unexpected way. These questions are gathered together in Section
\ref{questions}. 

The motivation for this work came in part from the following
question of Stark and Woodhouse \cite[Question 1.9]{starkwoodhouse}: 
If $H$ and $H'$ are one-ended residually finite hyperbolic groups that
act geometrically on the same simplicial complex, are $H$ and $H'$
abstractly commensurable? The groups considered here are certainly not
hyperbolic or residually finite, but they provide another setting, 
in addition to lattices in products of trees, in which incommensurable
groups can have a common combinatorial model. See also
\cite{dergachevaklyachko} for another recent example of this
phenomenon. 

\begin{ack}
The author was partially supported by Simons Foundation award
\#638026. He thanks Alex Margolis for helpful comments and for
suggesting Theorem \ref{discrete}. He thanks an anonymous referee
for noticing a flaw in the original definition of depth profile,
and for several helpful comments. 
\end{ack}

\section{Preliminaries}\label{sec:prelim}

\subsection{Graphs}
A \emph{graph} $A$ is a pair of sets $(V(A),E(A))$ with maps
$\partial_0, \partial_1 \co E(A) \to V(A)$ and a free involution $e
\mapsto \overline{e}$ on $E(A)$, such that $\partial_i(\overline{e})
= \partial_{1-i}(e)$ for all $e$. An element of $E(A)$ is an oriented
edge with initial vertex $\partial_0(e)$ and terminal vertex
$\partial_1(e)$; then $\overline{e}$ is the ``same'' edge with the
opposite orientation. An edge is a \emph{loop} if $\partial_0(e)
= \partial_1(e)$. For each $v\in V(A)$ we define $E_0(v) = \{ e \in
E(A) \mid \partial_0(e) = v \}$. 

A \emph{directed graph} is a graph $A$ together with a partition $E(A)
= E^+(A) \sqcup E^-(A)$ that separates every pair $\{e,
\overline{e}\}$. The edges in $E^+(A)$ are called \emph{directed}
edges. The ``direction'' is from $\partial_0(e)$ to
$\partial_1(e)$. For each $v \in V(A)$ we define $E^+_0(v) = E^+(A)
\cap E_0(v)$ and $E^-_0(v) = E^-(A) \cap E_0(v)$.

\subsection{$G$--trees} 
By a \emph{$G$--tree} we mean a simplicial tree $X$ with an action of
$G$ by simplicial automorphisms, without inversions. Given $X$, if 
$g\in G$ fixes a vertex we call $g$ \emph{elliptic}. If it is not
elliptic, then there is a $g$--invariant line in $X$, called the
\emph{axis} of $g$, on which $g$ acts by a non-trivial
translation. In this case, we call $g$ \emph{hyperbolic}. 
 An \emph{elliptic subgroup} is a subgroup $H < G$ that
fixes a vertex.

\subsection{Automorphisms of CW complexes} 
Let $X$ be a CW complex. A \emph{topological automorphism} of $X$ is a
homeomorphism which preserves the cell structure, and in particular,
the partition of $X$ into open cells. The group of such automorphisms
will be denoted by $\Aut_{top}(X)$.

We are interested in a more combinatorial
notion, however. The \emph{combinatorial automorphism group},
denoted 
by $\Aut(X)$, is the quotient of $\Aut_{top}(X)$
in which two automorphisms are considered the same if they induce the
same permutation on the set of cells of $X$. 

In many cases one can lift $\Aut(X)$ to a subgroup of
$\Aut_{top}(X)$ by imposing a metric constraint on topological
automorphisms. For the complexes $X_{m,n}$ considered in this paper,
there are piecewise hyperbolic or Euclidean metrics available that can
be used in this way; see Section \ref{metricsect}.

\subsection{Locally finite complexes}
If $X$ is connected and locally finite then $\Aut(X)$ is a locally
compact group, as we  explain here. For any CW complex $X$ the
topology on $\Aut(X)$ has a subbasis given by the sets  
\[ U^X_{\sigma \to \tau} \ = \ \{ \, f \in \Aut(X) \mid f(\sigma) = \tau
  \, \}\]
for all pairs of cells $\sigma, \tau$ of $X$. A basis is given by
finite intersections of these sets. That is, two automorphisms are
close if they agree on a large finite collection of cells of $X$. 

Define two cells to be \emph{adjacent} if their closures
intersect nontrivially. For any cell $\sigma$ let $B_{\sigma}(r)$
denote the combinatorial ball of radius $r$ about $\sigma$. This is
defined recursively as follows: $B_{\sigma}(0) = \{\sigma\}$, and
$\tau \in B_{\sigma}(r)$ if $\tau$ is equal or adjacent to a cell in
$B_{\sigma}(r-1)$. In a locally finite CW complex, every cell is
adjacent to only finitely many cells, and each $B_{\sigma}(r)$ is
finite. If $X$ is connected then $X = \bigcup_r B_{\sigma}(r)$ for any
$\sigma$. 

Now let $G = \Aut(X)$ with $X$ connected and locally finite. If
$\sigma$ is a cell, its stabilizer $G_{\sigma}$ 
is open (it equals $U^X_{\sigma \to \sigma}$) and has the structure
\[ G_{\sigma} \ = \ \underset{r}{\lim_{\longleftarrow}} \,
\image\bigl(G_{\sigma} \overset{\text{res}}{\longrightarrow}
\Aut(B_{\sigma}(r))\bigr).  \]
As an inverse limit of finite groups, $G_{\sigma}$ is profinite. 
Since compact open subgroups are always commensurable, all
cell stabilizers are commensurable. Finally, the existence of compact
open subgroups implies that $\Aut(X)$ is locally compact.

\subsection{Lattices}
In a locally compact group $G$, a discrete subgroup $\Gamma < G$ is
called a \emph{lattice} if $G/\Gamma$ carries a finite positive
$G$-invariant measure, and a \emph{uniform lattice} if $G/\Gamma$ is
compact. 

Now let $X$ be a connected, locally finite CW
complex. Let $G = \Aut(X)$. A subgroup $\Gamma < G$ is discrete 
if and only if every cell stabilizer $\Gamma_{\sigma}$ is finite. In
this case, define the \emph{covolume} of $\Gamma$ to be
\[ \Vol(X/\Gamma) \ = \ \sum_{[\sigma] \in
  \cells(X/\Gamma)} 1/\abs{\Gamma_{\sigma}}.\]
The sum is taken over a set of representatives of the 
$\Gamma$--orbits of cells of $X$. 

The next result follows directly from \cite[1.5--1.6]{basslubotzky}. 
\nocite{basslubotzky}

\begin{proposition}\label{blprop}
  Let $X$ be a connected locally finite CW complex. 
  Suppose that $G = \Aut(X)$ acts cocompactly on $X$ and let $\Gamma <
  G$ be a discrete subgroup. Then
  \begin{enumerate}
  \item\label{bl1} $\Gamma$ is a lattice if and only if\/
    $\Vol(X/\Gamma)< \infty$ 
  \item\label{bl2} $\Gamma$ is a uniform lattice if and only if\/
    $X/\Gamma$ is compact. 
  \end{enumerate}
\end{proposition}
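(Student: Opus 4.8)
The plan is to deduce Proposition~\ref{blprop} directly from the general machinery of Bass--Lubotzky on lattices acting on spaces, specifically \cite[1.5--1.6]{basslubotzky}, by checking that our setup satisfies their hypotheses. First I would recall that $G = \Aut(X)$, with $X$ connected and locally finite, is a locally compact group with a well-defined (left) Haar measure; normalize it so that some fixed compact open cell stabilizer has measure $1$. Since $G$ acts cocompactly on $X$ and each cell stabilizer $G_\sigma$ is a compact open subgroup, the action is what Bass--Lubotzky call a cocompact action with ``small'' stabilizers, and their theory applies verbatim.

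Next I would make the bridge between the topological covolume $\Vol(X/\Gamma)$ defined combinatorially above and the measure-theoretic covolume of $\Gamma$ in $G$. The key computation is the standard orbit-counting identity: for a discrete $\Gamma < G$ acting on $X$ with all $\Gamma_\sigma$ finite, one has
\[
\mathrm{vol}(G/\Gamma) \ = \ \sum_{[\sigma]} \frac{\mu(G_\sigma)}{\abs{\Gamma_\sigma}},
\]
the sum over $\Gamma$--orbit representatives of cells, where $\mu$ is Haar measure. Because all cell stabilizers $G_\sigma$ are commensurable compact open subgroups (as noted in the discussion of locally finite complexes), the factors $\mu(G_\sigma)$ are all positive and mutually comparable; in particular $\sum_{[\sigma]} \mu(G_\sigma)/\abs{\Gamma_\sigma}$ is finite if and only if $\sum_{[\sigma]} 1/\abs{\Gamma_\sigma} = \Vol(X/\Gamma)$ is finite. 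This is exactly the content of \cite[1.5]{basslubotzky}, and it gives part~\eqref{bl1}: $\Gamma$ is a lattice (finite positive covolume in $G$) iff $\Vol(X/\Gamma) < \infty$. Positivity is automatic since each summand is positive and the sum is nonempty.

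For part~\eqref{bl2}, I would invoke \cite[1.6]{basslubotzky}: a lattice $\Gamma$ is uniform iff $G/\Gamma$ is compact, and under a cocompact $G$--action on $X$ with compact open stabilizers this translates to compactness of the quotient $X/\Gamma$. Concretely: $X/\Gamma$ is compact iff $\Gamma$ has finitely many orbits of cells; when that holds, $\Vol(X/\Gamma)$ is a finite sum, so $\Gamma$ is automatically a lattice, and the double-coset decomposition $G = \bigsqcup \Gamma g_i G_\sigma$ (finitely many terms) shows $G/\Gamma$ is covered by finitely many compact pieces $g_i G_\sigma$, hence compact. Conversely if $G/\Gamma$ is compact then, since $G$ acts cocompactly on $X$ and $X/\Gamma = (G/\Gamma)/\!/G_\sigma$-type quotient, $X/\Gamma$ is a continuous image of a compact space, hence compact.

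The only real subtlety — and the step I would flag as the main obstacle, though it is minor — is being careful that the ``combinatorial'' automorphism group $\Aut(X)$ (a quotient of $\Aut_{top}(X)$) is the right group to which the Bass--Lubotzky formalism applies, and that ``cells of $X/\Gamma$'' is the correct index set. This is fine here because $\Aut(X)$ acts on the \emph{set of cells} of $X$, the stabilizers of cells are compact open, and the action is cocompact by hypothesis; so the combinatorial and measure-theoretic pictures agree. Once that identification is in place, both statements are immediate translations of \cite[1.5--1.6]{basslubotzky}, and no further argument is needed.
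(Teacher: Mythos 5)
Your proposal is correct and follows the same route as the paper, which offers no argument beyond the citation: it states that the proposition ``follows directly from \cite[1.5--1.6]{basslubotzky}.'' Your elaboration—normalizing Haar measure, the orbit-counting identity $\mathrm{vol}(G/\Gamma)=\sum_{[\sigma]}\mu(G_\sigma)/\abs{\Gamma_\sigma}$ (with comparability of the $\mu(G_\sigma)$ coming from cocompactness of the $G$--action, which gives finitely many $G$--orbits of cells, rather than from commensurability alone), and the compactness translation for uniformity—is exactly the standard bridge the Bass--Lubotzky results supply.
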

Note that it follows from \ref{blprop} that every torsion-free lattice
is uniform. (Indeed, every lattice with a bound on the size of finite
subgroups is uniform.)
The lattices considered in this paper will generally be torsion-free.

\section{Generalized Baumslag--Solitar groups}

\subsection{Definitions} 
A \emph{generalized Baumslag--Solitar group} (or \emph{GBS group}) is
a group that admits a 
graph of groups decomposition in which all vertex and edge groups are
$\Z$. Equivalently, it is a group that acts without inversions on a
simplicial tree such that all vertex and edge stabilzers are
infinite cyclic. Note that some authors require that GBS groups be
finitely generated. We have no such requirement here. 
We refer to \cite{forester3,forester2,levitt-auto}
for general background on these groups. 

If $G$ is a GBS group with corresponding Bass--Serre tree $X$, we call
$X$ a \emph{GBS tree} for $G$. Its quotient graph of groups has every
edge and vertex group isomorphic to $\Z$, with inclusion maps given by
multiplication by various non-zero integers. Thus this graph of groups
is specified by a connected graph $A$ ($=X/G$) and a \emph{label
function} $\lambda \co E(A) \to (\Z-\{0\})$. We denote the
corresponding graph of groups by $(A,\lambda)_{\Z}$. Letting $\lambda
\co E(X) \to (\Z - \{0\})$ denote the induced label function on $X$,
we have 
\begin{equation}\label{labelindex}
\abs{\lambda(e)} \ = \ [G_{\partial_0(e)} : G_e]
\end{equation}
for all edges $e\in E(X)$.

\subsection{Fibered $2$--complexes}
It is often useful to take a topological viewpoint as in
\cite{scottwall}. A GBS group is then the fundamental group of the 
total space of a graph of spaces in which each edge and vertex space
is a circle. Given a labeled graph $(A,\lambda)$ there are oriented
circles $C_v$ for each vertex $v$ and $C_e = C_{\overline{e}}$ for
each $e \in E(A)$. For each $e$ let $M_e$ be the mapping cylinder of
the degree $\lambda(e)$ covering map $C_e \to C_{\partial_0(e)}$. Note
that $M_e$ contains embedded copies of both $C_e$ and
$C_{\partial_0(e)}$. The total space of the graph of spaces is now
defined as the quotient \[Z_{(A,\lambda)} \ = \ \bigsqcup_{e\in E(A)}
M_e \Big\slash \!\sim\] where all copies of $C_v$ (in $M_e$ for  $e
\in E_0(v)$) are identified by the identity map, and $C_e$ and
$C_{\overline{e}}$ (in $M_e$ and $M_{\overline{e}}$) are identified by
the identity, for all $v\in V(A)$ and  $e\in E(A)$. We will call
$Z_{(A,\lambda)}$ the \emph{fibered $2$--complex} associated to
$(A,\lambda)$. It is naturally foliated by circles, and may be thought
of as a $2$--dimensional analogue of a Seifert fibered space, with
singular fibers the vertex circles $C_v$. The leaf space of
$Z_{(A,\lambda)}$ is $A$ (realized topologically as a
$1$--complex). Each $M_e$ is embedded in $Z_{(A,\lambda)}$, and under
the map to the leaf space, $M_e$ is the pre-image of a closed
half-edge in $A$. 

\begin{example}\label{seifert} 
Let $M$ be a Seifert fibered $3$--manifold that is not closed. Let
$\Sigma$ be the quotient $2$--orbifold, which has only isolated cone
singularities. The underlying surface (also non-closed) contains a
\emph{spine}, i.e., a $1$--complex embedded as a deformation
retract. This spine $S$ may be chosen so that the singularities of
$\Sigma$ are all vertices of $S$. The union of the fibers over $S$ is
then a deformation retract of $M$, and it has the structure of a
fibered $2$--complex. Hence $\pi_1(M)$ is a GBS group. 
\end{example}

Other well known examples of GBS groups include free-by-cyclic groups
$F_n \rtimes_{\phi} \Z$ with periodic monodromy $\phi$ \cite{levitt1}
and one-relator groups with non-trivial center
\cite{pietrowski}. Every finite index subgroup of a GBS group is 
a GBS group. 

\begin{example}\label{bsname} 
Let $(A,\lambda)$ be the labeled graph having one vertex and one loop
$e$ with labels $\lambda(e) = m$, $\lambda(\overline{e}) = n$. The
corresponding GBS group is the \emph{Baumslag--Solitar group} 
\begin{equation}\label{presentation}
BS(m,n) \ = \ \langle \, a, t \mid ta^m
  t^{-1} = a^n\, \rangle.
\end{equation}
This is the \emph{standard}
GBS structure for $BS(m,n)$. In this case we give the space
$Z_{(A,\lambda)}$ the additional name $Z_{m,n}$. 
\end{example}

\begin{notation}
We denote by $\bigvee_{i=1}^k BS(m_i,n_i)$ the GBS group defined by
the labeled graph having one vertex and $k$ loops, each with labels
$m_i$ and $n_i$. 
\end{notation}

\subsection{Non-elementary GBS groups} 
A GBS group $G$ is \emph{elementary} if it is isomorphic to $\Z$,
$\Z\times \Z$, or the Klein bottle group, or is 
the union of an infinitely ascending chain of infinite cyclic groups 
$C_0 \subset C_1 \subset C_2 \subset \dotsm$. 
In the last case, $G$ is not finitely
generated. Being elementary characterizes the property that some (equivalently, 
every) GBS tree for $G$ has limit set of cardinality $0$, $1$, or
$2$. 
The three finitely generated cases correspond to limit sets of size $0$
or $2$ by \cite[Lemma 2.6]{forester2}. A GBS tree with
one-point limit set yields the given description of $G$ as an
ascending union by 
\cite[7.2]{bass} or 
\cite[(3.4)]{tits}. 

A fundmental property of GBS groups other than $\Z\times \Z$ and the
Klein bottle group (and therefore of all non-elementary GBS groups) is
that any two GBS trees for the same group $G$ define the same
partition of $G$ into elliptic and hyperbolic elements \cite[Lemmas
2.5, 2.6]{forester2}, and moreover have the same elliptic subgroups.

\subsection{Segments and index}
Let $X$ be a locally finite $G$--tree. An \emph{edge path} of length
$k$ is either a vertex $v_0$ (if $k=0$) or a sequence of edges $\sigma
= (e_1, \dotsc, e_k)$ with $\partial_1(e_i) = v_i =  
\partial_0(e_{i+1})$ for each $i$. The \emph{initial} and
\emph{terminal vertices} are $\partial_0(\sigma) = v_0$ and
$\partial_1(\sigma) = v_k$ respectively. The
\emph{reverse} of $\sigma$ is $\overline{\sigma} = (\overline{e}_k, \dots,
\overline{e}_1)$.

A \emph{segment} is an edge path with no
backtracking, meaning that $e_{i+1} \not= \overline{e}_i$ for $1 \leq
i < k$. We call a segment \emph{non-trivial} if its length is positive. 
The pointwise stabilizer of $\sigma$ is $G_{\sigma}
= G_{\partial_0 \sigma} \cap G_{\partial_1\sigma}$. If $x,y$ are
vertices of $X$ then $[x,y]$ denotes the segment with initial vertex
$x$ and terminal vertex $y$. The \emph{index} of $\sigma$ is the
number $i(\sigma) \ = \ [G_{\partial_0 \sigma}: G_{\sigma}]$. This
index is also defined for edges, since an edge is a segment of length
one. Note that length zero segments have index $1$.

If $X$ is a GBS tree with label function $\lambda$ induced by the
quotient graph of groups, then $i(e) = \abs{\lambda(e)}$ for all $e
\in E(X)$, as noted in \eqref{labelindex}. The main difference between
\emph{labels} and \emph{indices} is that labels may be negative. 

In order to compute indices of segments we will use the following
result, which is Corollary 3.6 of \cite{forester3}. The original
statement had edges and labels instead of segments and indices, but the
proof is the same. 

\begin{lemma}\label{fixpathlemma}
Let $X$ be a GBS tree and suppose the segment $\sigma$ is a
concatenation of non-trivial segments $\sigma = \sigma_1 \dotsm \sigma_k$. Let
$n_i = i(\sigma_i)$ and $m_i = i(\overline{\sigma}_i)$ for each
$i$. Then $G_{\sigma_1}$ fixes $\sigma$ if and only if 
\begin{equation}\label{fixpath}
  \prod_{i=2}^{\ell} n_i \ \text{ divides } \ \prod_{i=1}^{\ell - 1} m_i
\end{equation}
for all $\ell = 2, \dotsc, k$. \qed
\end{lemma}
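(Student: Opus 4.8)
The plan is to analyze when the stabilizer $G_{\sigma_1}$, i.e.\ $G_{\partial_0\sigma}\cap G_{\partial_1\sigma_1}$, continues to fix each successive edge of $\sigma$ as we extend past $\partial_1\sigma_1$. Recall the basic local picture in a GBS tree: at a vertex $v$ with stabilizer $G_v\cong\Z$, an edge $e$ with $\partial_0 e = v$ has $G_e$ of index $i(e)=\abs{\lambda(e)}$ in $G_v$; and if $w=\partial_1 e$, then $G_e$ has index $i(\bar e)=\abs{\lambda(\bar e)}$ in $G_w$. So as one crosses an edge, a cyclic subgroup is replaced by a commensurable cyclic subgroup, with the two ``generators'' related by the ratio of the two labels. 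First I would set up notation writing $\sigma=(f_1,\dots,f_r)$ as an edge path (refining the $\sigma_i$), let $v_0,v_1,\dots,v_r$ be the consecutive vertices, and identify each $G_{v_j}$ with $\Z$ via a generator $a_j$; then track how a generator of $G_{\sigma_1} = G_{v_0}\cap G_{v_{r_1}}$ (where $r_1=\abs{\sigma_1}$) sits inside each $G_{v_j}$.

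The conceptual core is this: a subgroup $H\le G_{v_0}$ fixes the edge $f_{j+1}$ (with $\partial_0 f_{j+1}=v_j$) if and only if $H$, viewed inside $G_{v_j}$, lands inside $G_{f_{j+1}}$, which is the index-$i(f_{j+1})$ subgroup. Crossing $f_j$ from $v_{j-1}$ to $v_j$ multiplies the relevant index by $i(\bar f_j)$ (the gain going into $v_j$) and divides by $i(f_j)$ (the constraint leaving $v_{j-1}$). So the condition for $G_{\sigma_1}$ to fix the whole path reduces to a sequence of divisibility conditions: at each new edge, the accumulated ``numerator'' of indices on the forward side must divide the accumulated ``denominator'' of indices on the reverse side. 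Grouping the edges back into the segments $\sigma_i$, with $n_i=i(\sigma_i)$ and $m_i=i(\overline\sigma_i)$, this is exactly the assertion that $\prod_{i=2}^{\ell} n_i$ divides $\prod_{i=1}^{\ell-1} m_i$ for each $\ell=2,\dots,k$. The quickest route is to invoke Corollary 3.6 of \cite{forester3} directly — as the paper already notes, that result is the edge-and-label version and the proof for segments and indices is word-for-word identical, since $i(e)=\abs{\lambda(e)}$ and all the arithmetic in the proof uses only absolute values of labels.

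If instead one wants a self-contained argument, the key step is an induction on $k$: the base case $k=2$ is the statement that $G_{v_0}\cap G_{\partial_1\sigma_1}$ fixes $\sigma_2$ iff $n_2\mid m_1$, which is immediate from the two-edge-crossing computation above (the subgroup of $G_{\partial_1\sigma_1}=\Z$ of index $n_1$, pushed through the edge, must land inside the index-$n_2$ subgroup, and the comparison of these two indices inside the common cyclic group $G_{\partial_1\sigma_1}$ involves $m_1$ and $n_2$). The inductive step observes that fixing $\sigma_1\cdots\sigma_\ell$ is equivalent to fixing $\sigma_1\cdots\sigma_{\ell-1}$ and additionally fixing $\sigma_\ell$, and that once $\sigma_1\cdots\sigma_{\ell-1}$ is fixed, the stabilizer $G_{\sigma_1}$ sits inside $G_{\partial_1(\sigma_1\cdots\sigma_{\ell-1})}$ with a computable index, namely $\bigl(\prod_{i=1}^{\ell-1} m_i\bigr)\big/\bigl(\prod_{i=2}^{\ell-1} n_i\bigr)$ times $n_1$, or rather one tracks the index of $G_{\sigma_1}$ in $G_{\partial_0\sigma_1}$ versus in $G_{\partial_1(\sigma_1\cdots\sigma_{\ell-1})}$; then fixing $\sigma_\ell$ adds the constraint that this index be divisible by $n_\ell$ after crossing, which rearranges to $\prod_{i=2}^{\ell}n_i\mid\prod_{i=1}^{\ell-1}m_i$.

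The main obstacle is purely bookkeeping: keeping the ``forward'' indices $n_i$ and ``reverse'' indices $m_i$ on the correct sides of the divisibility, and correctly handling the fact that $G_{\sigma_1}$ may be a proper subgroup of $G_{\partial_0\sigma_1}$ (it already has index $n_1$ there, from fixing $\sigma_1$ itself). Because labels can be negative there is a potential sign subtlety, but it evaporates since every quantity in sight is an index, hence a positive integer — this is exactly why the paper flags that ``the proof is the same'' as in \cite{forester3}. Given the availability of that reference, the honest proof is a one-line citation; the value of writing out the induction is only to make the translation from labels to indices transparent to the reader.
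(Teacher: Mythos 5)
Your proposal is correct and takes essentially the same route as the paper, which disposes of the lemma by citing Corollary 3.6 of \cite{forester3} and observing that the argument carries over verbatim once edges and labels are replaced by segments and indices (all quantities being positive, signs are irrelevant). The only blemish is a small slip in your auxiliary sketch: $G_{\sigma_1}$ has index $m_1$, not $n_1$, in $G_{\partial_1\sigma_1}$, so the accumulated index of $G_{\sigma_1}$ in $G_{\partial_1(\sigma_1\dotsm\sigma_{\ell-1})}$ is $\prod_{i=1}^{\ell-1} m_i \big/ \prod_{i=2}^{\ell-1} n_i$ with no extra factor of $n_1$; with that correction your induction yields exactly the stated divisibility conditions.
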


Note: the lemma does not apply to every concatenation of
segments. The concatenation itself must also be a segment. 


The index of a segment can now be determined as follows (for instance,
by taking each $\sigma_i$ to be an edge): 

\begin{lemma}\label{segmentindex} 
Let $X$ be a GBS tree and suppose the segment $\sigma$ is a
concatenation of non-trivial segments $\sigma = \sigma_1 \dotsm \sigma_k$. Let
$n_i = i(\sigma_i)$ and $m_i = i(\overline{\sigma}_i)$ for each
$i$. Then $i(\sigma)$ is  the smallest positive integer $r$ such that 
\begin{equation}\label{pathindex}
  n_1 \mid r \ \text{ and } \ \prod_{i=1}^{\ell} n_i \ \text{
    divides } \ r \prod_{i=1}^{\ell - 1} m_i \ \text{ for } \ \ell = 2,
  \dotsc, k. 
\end{equation}
\end{lemma}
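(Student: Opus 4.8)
The plan is to derive the characterization of $i(\sigma)$ directly from Lemma~\ref{fixpathlemma} by applying it to a suitably rescaled cyclic subgroup. Recall that by definition $i(\sigma) = [G_{\partial_0\sigma} : G_\sigma]$, and $G_{\partial_0\sigma}$ is infinite cyclic; write $G_{\partial_0\sigma} = \langle g \rangle$. For each positive integer $r$, the subgroup $\langle g^r \rangle$ has index $r$ in $G_{\partial_0\sigma}$, and $i(\sigma)$ is precisely the smallest $r$ such that $\langle g^r \rangle \leq G_\sigma$, i.e.\ the smallest $r$ such that $\langle g^r \rangle$ fixes all of $\sigma$. So the task reduces to determining, in terms of the indices $n_i, m_i$, when $\langle g^r \rangle$ fixes $\sigma$.

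The key step is to reduce this to Lemma~\ref{fixpathlemma} by prepending a segment. First I would handle the condition $n_1 \mid r$: the subgroup $\langle g^r\rangle$ fixes the first edge of $\sigma$ (equivalently the first sub-segment $\sigma_1$) if and only if $\langle g^r \rangle \leq G_{\sigma_1}$, and since $[G_{\partial_0\sigma} : G_{\sigma_1}] = i(\sigma_1) = n_1$, this happens exactly when $n_1 \mid r$. Granting that, $\langle g^r \rangle$ is a finite-index subgroup of $G_{\partial_0 \sigma_1}$ whose index equals $r/n_1$ times \dots\ — more cleanly, I would introduce an auxiliary segment. Choose any GBS tree structure (or just work combinatorially) and let $\sigma_0$ be a segment, or a "virtual" edge, with $\partial_1(\sigma_0) = \partial_0(\sigma)$, $i(\sigma_0) = 1$, and $i(\overline{\sigma}_0) = r/n_1$ — or rather, invoke the statement of Lemma~\ref{fixpathlemma} with a one-term reindexing so that $G_{\partial_0\sigma}$ itself, cut down by $\langle g^r\rangle$, plays the role of $G_{\sigma_1}$ with $m_1$ replaced appropriately. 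Concretely: once $n_1 \mid r$, set $r' = r/n_1$; then $\langle g^r\rangle = (\langle g\rangle \cap G_{\sigma_1})$-index-$r'$ subgroup inside $G_{\sigma_1} = G_{\partial_0\sigma_1}\cap G_{\partial_1\sigma_1}$, and $\langle g^r\rangle$ fixes the rest of $\sigma$ iff the divisibility conditions of Lemma~\ref{fixpathlemma} hold for the concatenation $\sigma_1\cdots\sigma_k$ but with $m_1$ enlarged by the factor $r'$ — i.e.\ with $m_1$ replaced by $r'm_1 = (r/n_1)m_1$. Substituting this into \eqref{fixpath} for each $\ell = 2,\dots,k$ gives $\prod_{i=2}^{\ell} n_i \mid (r/n_1)\prod_{i=1}^{\ell-1} m_i$, which upon clearing $n_1$ is exactly $\prod_{i=1}^\ell n_i \mid r\prod_{i=1}^{\ell-1} m_i$. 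Together with $n_1 \mid r$ this is \eqref{pathindex}, and taking the smallest such $r$ yields $i(\sigma)$.

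The main obstacle will be justifying the "$m_1 \mapsto (r/n_1)m_1$" replacement cleanly — i.e.\ showing that passing from $G_{\sigma_1}$ to its index-$r'$ subgroup $\langle g^r\rangle$ has exactly the effect of scaling the first backward index by $r'$ in the hypothesis of Lemma~\ref{fixpathlemma}. This is because $G_{\sigma_1}$ sits in $G_{\partial_1\sigma_1}$ with index $m_1$ (that being the definition of $m_1 = i(\overline{\sigma}_1)$), so a subgroup of $G_{\sigma_1}$ of index $r'$ sits in $G_{\partial_1\sigma_1}$ with index $r'm_1$; and since the criterion in Lemma~\ref{fixpathlemma} for a subgroup of $G_{\sigma_1}$ to fix the continuation depends on that subgroup only through its index in $G_{\partial_1\sigma_1}$ (because all these groups are infinite cyclic and the divisibility conditions are read off from successive indices), the replacement is valid. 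I would spell out this last point using the fact that $\langle g^r\rangle$, being characteristic-ish in $\langle g\rangle$ (it is the unique index-$r$ subgroup), is contained in $G_{\partial_1\sigma_1}$ iff its image has the right index, and then Lemma~\ref{fixpathlemma} applies verbatim with the shifted data. An alternative, perhaps cleaner write-up is to prepend to $\sigma$ a genuine segment $\sigma_0$ with $i(\sigma_0)=1$ and $i(\overline{\sigma}_0)$ an arbitrary large integer (available in a suitable GBS tree), apply Lemma~\ref{fixpathlemma} to $\sigma_0\sigma_1\cdots\sigma_k$, and note that $G_{\sigma_0} = G_{\partial_0\sigma}$ since $i(\sigma_0)=1$; then the $\ell=2$ condition of \eqref{fixpath} becomes $n_1 \mid m_0'$ where $m_0' = i(\overline{\sigma}_0)$ is the free parameter, which is exactly the bookkeeping device for the index $r$. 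I expect one of these two routes to go through with only routine index arithmetic after the conceptual reduction is in place.
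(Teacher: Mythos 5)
Your proposal is correct and, in its ``alternative cleaner write-up,'' is exactly the paper's proof: reduce $i(\sigma)$ to the smallest $r$ such that $\langle g^r\rangle$ fixes $\sigma$, prepend an auxiliary segment $\sigma_0$ with $i(\overline{\sigma}_0)=r$ (in a possibly larger GBS tree), and read off \eqref{pathindex} from Lemma~\ref{fixpathlemma} applied to $\sigma_0\sigma_1\dotsm\sigma_k$. The only correction needed is the slip ``$G_{\sigma_0}=G_{\partial_0\sigma}$'': what holds (and what the bookkeeping requires) is $G_{\sigma_0}=\langle g^{m_0'}\rangle$, the index-$m_0'$ subgroup of $G_{\partial_0\sigma}$, the hypothesis $i(\sigma_0)=1$ being unnecessary; your first route's ``$m_1\mapsto (r/n_1)m_1$'' replacement is this same prepending argument in disguise, and is best justified that way rather than by appealing to Lemma~\ref{fixpathlemma} for a proper subgroup of $G_{\sigma_1}$.
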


\begin{proof}
Write $G_{\partial_0 \sigma} = \langle \, x \, \rangle$. We want $r$
such that $G_{\sigma} = \langle \, x^r \, \rangle$, which occurs if
and only if $r$ is smallest such that $\langle \, x^r \, \rangle$
fixes $\sigma$. Consider a longer segment $\sigma_0 \cdot \sigma$
where $i(\overline{\sigma}_0) = r$ (perhaps in a larger GBS
tree). Then $G_{\sigma_0} = \langle \, x^r \, \rangle$, and so
$\langle \, x^r \, \rangle$ fixes $\sigma$ if and only if condition
\eqref{fixpath} holds for the concatenation $\sigma_0 \dotsm \sigma_k$
(for $\ell = 1, \dotsc, k$). These conditions are exactly statement
\eqref{pathindex}. 
\end{proof}

\begin{remark}\label{shortindex}
When $k=2$ there is a closed formula  $i(\sigma) = n_1 n_2 /
\gcd(m_1, n_2))$. Applying this formula iteratively is usually easier
than using \eqref{pathindex}. 
\end{remark}

More important than \eqref{pathindex}, perhaps, is 
that $i(\sigma)$ and $i(\overline{\sigma})$ are determined completely
by the indices seen along $\sigma$, in any expression as a
concatenation. 

\begin{lemma}\label{hypelement}
Let $X$ be a GBS tree and suppose $\sigma = [x,gx]$ for some vertex
$x$ with $x \not= gx$. If\/ $i(\sigma) > 1$ then there is a hyperbolic
element $h\in G$ such that $hx = gx$. 
\end{lemma}

\begin{proof}
If $g$ is hyperbolic there is nothing to prove. So suppose $g$ is
elliptic with fixed subtree $X_g$. Then $X_g 
\cap \sigma = \{m\}$, the midpoint of $\sigma$. Let $G_x = \langle a
\rangle$. If $a$ fixes $m$ then $G_x \subset G_m$, and since $g\in
G_m$ we have $G_x = (G_x)^g = G_{gx}$. But then $i(\sigma) = 1$, a
contradiction. So $X_g \cap X_a = \emptyset$ and therefore $ga$ is a
hyperbolic element taking $x$ to $gx$. 
\end{proof}

\subsection{The modular homomorphism} 
Let $G$ be a GBS group with GBS tree $X$ and quotient
labeled graph $(A,\lambda)$. Fix a non-trivial elliptic element $a \in
G$. Since elliptic elements are all commensurable, every element $g$
satisfies a relation $g^{-1}a^mg = a^n $ for 
some non-zero integers $m,n$. The function $q(g) = m/n$ defines a
homomorphism $G \to \Q^{\times}$ called the \emph{modular
homomorphism} (cf. \cite{kropholler-centrality,basskulkarni}). It does
not depend on the choice of $a$.

The modular homomorphism takes the value $1$ on every elliptic
element, so it factors through $\pi_1(A)$, indeed through $H_1(A)$
since $\Q^{\times}$ is abelian. It can be computed from $(A,\lambda)$
as follows. If $g\in G$ maps to $\alpha \in H_1(A)$ represented by
the $1$--cycle $(e_1, \dotsc, e_n)$, then
\begin{equation}\label{mh}
  q(g) \ = \ \prod_{i=1}^n \lambda(e_i)/\lambda(\overline{e}_i).
\end{equation}
Recall that when $G$ is non-elementary, all GBS trees for $G$ have the
same elliptic elements. Thus, in this case, $q$ is well defined in
terms of $G$ alone. 

If $V$ is any non-trivial elliptic subgroup of $G$, then there is a
formula 
\begin{equation}\label{modulus}
  \abs{q(g)} \ = \ [V: V \cap V^g] / [V^g : V \cap V^g] 
\end{equation}
(see \cite[Section 6]{forester3}). Note that for $V = G_x$, the right
hand side is the ratio of indices $i(\sigma) / i(\overline{\sigma})$
for the segment $\sigma = [x, gx]$.

\subsection{The orientation character} 
Let $G$ be a GBS group with GBS tree $X$. The \emph{orientation
character} of $G$ (which a priori depends on $X$) is a homomorphism
$\chi\co G \to \{\pm 1\}$ defined by 
\[\chi(g) \ = \ q(g) / \abs{q(g)}.\] 
When $G$ is non-elementary $\chi$ is well defined in terms of $G$
alone, since this is true of $q$. When $G$ is $\Z$, $\Z\times \Z$, or
$\bigcup_i C_i$,  
every orientation character is trivial. In the case of the Klein
bottle group, there are two deformation spaces of GBS trees, one with
trivial and one with non-trivial orientation character. Note that
every GBS group has a subgroup of index at most $2$ with trivial
orientation character. 

\begin{remark}\label{signchange}
Let $G$ be a GBS group with GBS tree $X$ and quotient labeled graph
$(A,\lambda)$. If the orientation character is trivial then the
label function $\lambda$ can be made positive by \emph{admissible sign
changes}; see \cite[Lemma 2.7]{clayforester}. 

An admissible sign change is the change in $\lambda$ that results from
changing the choices of generators in the graph of groups defined by
$(A,\lambda)$. The graph of groups itself does not change, only its
description in terms of labels. If one changes the generator of a
vertex group $G_v$, then the labels of edges in $E_0(v)$ all change
sign. If one changes the generator of an edge group $G_e$, then
$\lambda(e)$ and $\lambda(\overline{e})$ both change sign. 
\end{remark}

\subsection{Labeled graphs and deformations}
It often happens that different labeled graphs define isomorphic
GBS groups. A given GBS group generally has no preferred GBS tree (or
labeled graph). Indeed, it is still an open problem to find an
algorithm which determines whether two labeled graphs define the same
group. (This problem has been solved in some special cases; see
especially \cite{forester3, levitt-auto, clayforester, dudkin, CRKZ}.) 

It is true, however, that any two cocompact GBS trees for a
non-elementary $G$ are related by an  \emph{elementary deformation}
\cite{forester1}. This means that they are related by a finite
sequence of elementary moves, called elementary collapses and
expansions. 

\begin{definition}
Let $X$ be a $G$--tree and $e$ an edge of $X$ with endpoints in
distinct orbits, such that $G_e = G_{\partial_0 (e)}$. One obtains a new
$G$--tree $Y$ by collapsing each component of the subforest spanned by
$Ge$ to a vertex. This operation is called an \emph{elementary
  collapse} move. The vertex which is the image of $e$ has stabilizer
$G_{\partial_1(e)}$ and the set of elliptic subgroups of $G$ does not
change. 

In the setting of GBS trees it is convenient to use a slightly
restricted definition of expansion move (compared to \cite{forester1}, 
where it is just the reverse of a collapse move). 
Let $X$ and $Y$ be as above. The transition from $Y$ to $X$ is called an
\emph{elementary expansion} move in all cases \emph{except} when the
vertex $\partial_0(e)$ has valence one and has trivial stabilizer. 

The effect of an elementary move on the
quotient graph of groups is as follows (with $A=G_{\partial_1(e)}$ and 
$C = G_e = G_{\partial_0(e)}$): 

\begin{pict}{90}{13}
\put(10,5){\circle*{1}}
\put(25,5){\circle*{1}}

\put(10,5){\line(1,0){15}}

\thinlines
\put(45,6.5){\vector(1,0){15}}
\put(60,2.2){\vector(-1,0){15}}
\zindex{52.5}{8.2}{\mbox{collapse}}
\zindex{52.5}{4}{\mbox{expansion}}

\put(25,5){\line(5,-3){5}}
\put(25,5){\line(5,3){5}}

\put(10,5){\line(-5,3){5}}
\put(10,5){\line(-5,-3){5}}

\put(80,5){\circle*{1}}
\put(80,5){\line(-5,3){5}}
\put(80,5){\line(-5,-3){5}}

\put(80,5){\line(5,-3){5}}
\put(80,5){\line(5,3){5}}

\put(4,5){\line(1,0){27}}
\put(74,5){\line(1,0){12}}

\zindex{10}{8}{A}
\zindex{17.5}{7.5}{C}
\zindex{25}{8}{C}

\zindex{80}{8}{A}
\end{pict}%

The elementary deformation shown below is called a \emph{slide move}. To
perform the move it is required that $D \subseteq C$. The edge with label
$C$ is allowed be a loop. 

\begin{pict}{120}{16}
\thicklines
\put(102,4){\circle*{1}}
\put(114,4){\circle*{1}}
\put(102,4){\line(1,0){12}}
\put(114,4){\line(-1,2){4}}

\zindex{102}{1.5}{A}
\zindex{114}{1.5}{B}
\zindex{108}{2}{C}
\zindex{109.5}{8}{D}

\thinlines
\put(114,4){\line(5,3){5}}
\put(114,4){\line(5,-3){5}}
\put(102,4){\line(-5,3){5}}
\put(96,4){\line(1,0){6}}
\put(102,4){\line(-5,-3){5}}


\thicklines
\put(6,4){\circle*{1}}
\put(18,4){\circle*{1}}
\put(6,4){\line(1,0){12}}
\put(6,4){\line(1,2){4}}

\zindex{6}{1.5}{A}
\zindex{18}{1.5}{B}
\zindex{12}{2}{C}
\zindex{10}{8}{D}

\thinlines
\put(18,4){\line(5,3){5}}
\put(18,4){\line(5,-3){5}}
\put(6,4){\line(-5,3){5}}
\put(0,4){\line(1,0){6}}
\put(6,4){\line(-5,-3){5}}


\thicklines
\put(50,4){\circle*{1}}
\put(60,4){\circle*{1}}
\put(70,4){\circle*{1}}
\put(50,4){\line(1,0){20}}
\put(60,4){\line(0,1){8}}

\zindex{50}{1.5}{A}
\zindex{55}{2}{C}
\zindex{60}{1.5}{C}
\zindex{65}{2}{C}
\zindex{70}{1.5}{B}
\zindex{58}{8}{D}

\thinlines
\put(70,4){\line(5,3){5}}
\put(70,4){\line(5,-3){5}}
\put(50,4){\line(-5,3){5}}
\put(44,4){\line(1,0){6}}
\put(50,4){\line(-5,-3){5}}


\zindex{34}{6.5}{\mbox{exp.}}
\zindex{86}{7}{\mbox{coll.}}
\put(29,5){\vector(1,0){10}}
\put(81,5){\vector(1,0){10}}
\end{pict}%
\end{definition}

With the definitions given above, any elementary move performed on a
GBS tree results in a GBS tree. The next proposition gives a
description of these moves in terms of labeled graphs. 

\begin{proposition}[Prop. 2.4 of \cite{forester3}] \label{gbsmoves} 
If an elementary move is performed on a generalized Baumslag--Solitar
tree, then the quotient graph of groups changes locally as follows: 

\begin{pict}{90}{12}
\thicklines
\put(10,5){\circle*{1}}
\put(25,5){\circle*{1}}

\put(10,5){\line(1,0){15}}

\thinlines
\put(45,6.5){\vector(1,0){15}}
\put(60,2.2){\vector(-1,0){15}}
\zindex{52.5}{8.2}{\mbox{collapse}}
\zindex{52.5}{4}{\mbox{expansion}}

\put(25,5){\line(3,5){3}}
\put(25,5){\line(3,-5){3}}

\put(10,5){\line(-5,3){5}}
\put(10,5){\line(-5,-3){5}}

\put(80,5){\circle*{1}}
\put(80,5){\line(-5,3){5}}
\put(80,5){\line(-5,-3){5}}

\put(80,5){\line(3,5){3}}
\put(80,5){\line(3,-5){3}}

\scriptsize
\zindex{8.5}{8}{a}
\zindex{8.5}{2}{b}
\zindex{12}{6.8}{n}
\zindex{22}{7}{\pm 1}
\zindex{28.5}{7.5}{c}
\zindex{28.5}{2.8}{d}

\zindex{78.5}{8}{a}
\zindex{78.5}{2}{b}
\zindex{85.5}{7.5}{\pm nc}
\zindex{85.9}{2.8}{\pm nd}
\end{pict} 
A slide move has the following description: 

\begin{pict}{100}{13}
\thicklines
\put(75,3){\circle*{1}}
\put(90,3){\circle*{1}}
\put(75,3){\line(1,0){15}}
\put(90,3){\line(-1,2){4}}

\thinlines
\put(47.5,3){\vector(1,0){10}}
\zindex{52.5}{5}{\mbox{slide}}

\put(90,3){\line(5,3){5}}
\put(90,3){\line(5,-3){5}}
\put(75,3){\line(-5,3){5}}
\put(69,3){\line(1,0){6}}
\put(75,3){\line(-5,-3){5}}

\scriptsize
\zindex{77}{1.5}{m}
\zindex{88}{1.5}{n}
\zindex{86}{6}{ln}

\thicklines
\put(10,3){\circle*{1}}
\put(25,3){\circle*{1}}
\put(10,3){\line(1,0){15}}
\put(10,3){\line(1,2){4}}

\thinlines
\put(25,3){\line(5,3){5}}
\put(25,3){\line(5,-3){5}}
\put(10,3){\line(-5,3){5}}
\put(4,3){\line(1,0){6}}
\put(10,3){\line(-5,-3){5}}

\scriptsize
\zindex{12}{1.5}{m}
\zindex{23}{1.5}{n}
\zindex{9}{7}{lm}
\end{pict} 
or 

\begin{pict}{100}{10}
\thicklines

\put(82.5,5){\circle*{1}}
\put(87.5,5){\circle{10}}
\put(72.5,5){\line(1,0){10}}

\thinlines
\put(47.5,5){\vector(1,0){10}}
\zindex{52.5}{7}{\mbox{slide}}

\put(82.5,5){\line(-5,-3){4.5}}
\put(82.5,5){\line(-1,-4){1.2}}
\put(82.5,5){\line(-1,4){1.2}}

\scriptsize
\zindex{85.1}{3.5}{m}
\zindex{84.7}{6.5}{n}
\zindex{79.4}{7.3}{ln}

\thicklines

\put(17.5,5){\circle*{1}}
\put(22.5,5){\circle{10}}
\put(7.5,5){\line(1,0){10}}

\thinlines
\put(17.5,5){\line(-5,-3){4.5}}
\put(17.5,5){\line(-1,-4){1.2}}
\put(17.5,5){\line(-1,4){1.2}}

\scriptsize
\zindex{20.1}{3.5}{m}
\zindex{19.7}{6.5}{n}
\zindex{14}{7.3}{lm}
\end{pict} 
\end{proposition}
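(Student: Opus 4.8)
The plan is to unwind, via Bass--Serre theory, the definitions of the three elementary moves on GBS trees (as in \cite{forester1}) and to read off their effect on the quotient labeled graph, using throughout the identity $\abs{\lambda(e)} = [G_{\partial_0(e)}:G_e]$ from \eqref{labelindex} to pass between labels and indices, and the fact that all vertex and edge stabilizers are infinite cyclic. The efficient route is to treat the collapse move first, obtain the expansion move for free as its formal inverse, and then handle the slide move either directly or by composing an expansion with a collapse.

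For the \emph{collapse} move I would recall that it is performed at a non-loop edge $e$ of $X$ whose endpoints lie in distinct $G$--orbits and for which one inclusion $G_e \hookrightarrow G_{\partial_i(e)}$ is an isomorphism --- equivalently, one label of $e$ is $\pm1$ --- and that $Ge$ is then collapsed equivariantly. The key point is to identify the vertex group at the merged vertex. Say $G_e = G_{\partial_0(e)}$; then every vertex in the orbit of $\partial_0(e)$ meets just one collapsed edge, so the preimage of the merged vertex in $X$ is exactly the star of collapsed edges at $\partial_1(e)$, and an element of $G$ preserves this star if and only if it fixes $\partial_1(e)$. Hence the merged vertex carries $G_{\partial_1(e)} = \langle x\rangle$, and the edges at (the image of) $\partial_1(e)$, together with their labels, are unaffected. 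For an edge $f$ based at (the image of) $\partial_0(e)$ with label $c$, I would identify $G_e$ with $G_{\partial_0(e)} = \langle y\rangle$ via the label $\pm1$; the label $n$ at the other end of $e$ then records the relation $y = x^{\pm n}$ inside $\langle x\rangle$, so the attaching map of $f$, which sent a generator to $y^c$, now sends it to $x^{\pm nc}$, producing the label $\pm nc$ of the picture while leaving the far end of $f$ untouched. Since the \emph{expansion} move is by definition the inverse of a collapse, its description is just the same picture read backwards: one chooses the integer $n$ and the edges to be pushed off onto a new vertex (necessarily those whose labels are divisible by $n$), the new edge carrying the labels $\pm1$ and $n$.

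For the \emph{slide} move I would argue directly. A slide detaches an edge $f$ from a vertex $v$ and reattaches it at the other endpoint $u$ of an edge $\epsilon$ at $v$, under the hypothesis $G_f \subseteq G_\epsilon$ inside $G_v = \langle x\rangle$; writing the label of $\epsilon$ at $v$ as $m$ and that of $f$ at $v$ as $lm$ (the containment $G_f\subseteq G_\epsilon$ being exactly the divisibility $m\mid lm$), we have $G_f = \langle (x^m)^l\rangle$. If $u\ne v$, then $x^m = y^{\pm n}$ inside $G_u = \langle y\rangle$, where $n$ is the label of $\epsilon$ at $u$, so $G_f = \langle y^{\pm ln}\rangle$ and the new label of $f$ at $u$ is $ln$ (up to sign); if $\epsilon$ is a loop at $v$, its defining relation $t x^m t^{-1} = \pm x^n$ conjugates $\langle x^{lm}\rangle$ to $\langle x^{\pm ln}\rangle$, again giving the label $ln$. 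These are the two slide pictures; as a consistency check one can instead realize the slide as an expansion at $v$ followed by a collapse and compose the two preceding descriptions. I would finish by remarking that in each move the new vertex and edge groups are again infinite cyclic, so the output is a GBS tree, and that all changes are confined to the depicted subgraphs.

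The main obstacle is bookkeeping rather than any genuine difficulty: one must track the chosen generators of the various copies of $\Z$ and the resulting signs (the $\pm$'s in the pictures really are unavoidable --- they are precisely the ambiguity of Remark \ref{signchange}), and one must confirm that the ``local'' pictures remain valid in the degenerate configurations, when the endpoints of the collapsed or slid edge coincide with other relevant vertices or when a neighbouring edge is a loop, which is the reason the slide move is stated in two separate cases.
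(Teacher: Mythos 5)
Your proposal is correct, but note that the paper itself contains no proof of Proposition \ref{gbsmoves}: it is quoted directly from \cite{forester3} (Prop.\ 2.4), so there is no internal argument to compare against. Judged on its own, your sketch reconstructs essentially the standard argument of that reference — identify the stabilizer of the merged vertex after an equivariant collapse, translate between labels and indices via \eqref{labelindex} to get the factor $\pm n$ on the edges formerly attached at the $\pm 1$ end, obtain expansion as the formal inverse, and compute the slid edge's index inside the new vertex group (with the loop case handled by the conjugation relation $tx^mt^{-1}=x^{\pm n}$) — and the sign bookkeeping and degenerate configurations you flag are indeed the only delicate points.
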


The moves above may be interpreted topologically, as homotopy
equivalences between fibered $2$--complexes in which annuli are
collapsed or expanded or slid over one another. 

\begin{remark}
The claim that any two cocompact GBS trees for a non-elementary $G$ are
related by an elementary deformation remains true with our restricted
notion of expansion move. The proofs given in \cite{forester1} and
\cite{forester3} produce deformations that do not use the forbidden
expansion move, since they come from folds, which can only increase
stabilizers. Alternatively, one may make both trees reduced and then
apply the main result of \cite{clayforester2}, which produces 
deformations of a restricted kind in which all trees are minimal. (The
forbidden move creates a non-minimal tree.) 
\end{remark}

\subsection{Subgroups of GBS groups}
Every subgroup of a GBS group is either a GBS group or a free
group. In the former case the inclusion $H \hookrightarrow G$ is
induced by a covering map from one fibered $2$--complex to
another. Such covering maps are encoded by \emph{admissible branched
coverings} of labeled graphs, defined as follows (cf. \cite[Lemma
5.3]{levitt1}). 

\begin{definition}\label{admissibledef}
An \emph{admissible branched covering} of labeled graphs, from
$(A,\lambda)$ to $(B,\mu)$, consists of a surjective graph morphism $
p \co A \to B$ between connected graphs together with a \emph{degree}
function 
\[d \co E(A) \sqcup V(A) \to \N\]
satisfying $d(e) = d(\overline{e})$ for all $e\in E(A)$ such that the
following holds. Given an edge $e \in E(B)$ with $\partial_0(e) = v$ 
and a vertex $u\in p^{-1}(v)$, let $k_{u,e} = \gcd(d(u),\mu(e))$. Then
\begin{enumerate}
\item $\abs{p^{-1}(e) \cap E_0(u)} = k_{u,e}$
\item $\lambda(e') = \mu(e)/k_{u,e}$ for each edge $e' \in p^{-1}(e)
  \cap E_0(u)$ 
\item $d(e') = d(u)/k_{u,e}$ for each edge $e' \in p^{-1}(e) \cap
  E_0(u)$. 
\end{enumerate}
See Figure \ref{admissiblefig}. 
\end{definition}

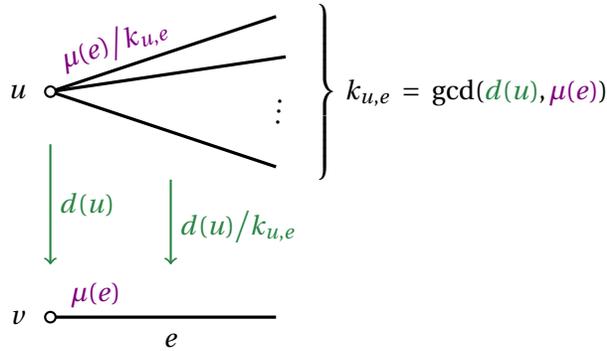
\begin{figure}[!ht]
\hspace*{3.5cm}
\begin{tikzpicture}
\small
  \draw[very thick] (1,4) -- (4,5);
  \draw[very thick] (1,4) -- (4,3);
  \draw[rotate around={-10:(1,4)},very thick] (1,4) -- (4,5);
  \filldraw[fill=white,thick] (1,4) circle (.7mm);
  \draw (0.8,4) node[anchor=east] {$u$};
  \draw[rotate around={-5:(1,4)}] (4.05,4.112) node[transform
  shape,anchor=center] {$\vdots$}; 
  \draw (4,4) node[anchor=west] {$\begin{rcases}  & \\ & \\ &
      \\ & \end{rcases} \ k_{u,e} \ = \
    \gcd(\textcolor{mygreen}{d(u)},\textcolor{violet}{\mu(e)})$}; 
  \draw[rotate around={18.435:(1,4)}] (1.15,4) node[transform
    shape,anchor=south west,violet] {$\mu(e)\big\slash k_{u,e}$};

  \draw[thick,->,color=mygreen] (1,3.3) -- (1,1.7);
  \draw[color=mygreen] (1,2.5) node[anchor=west] {$d(u)$}; 
  \draw[thick,->,color=mygreen] (2.6,2.83) -- (2.6,1.7);
  \draw[color=mygreen] (2.6,2.2) node[anchor=west] {$d(u)\big\slash
    k_{u,e}$};  
  
  \draw[very thick] (1,1) -- (4,1); 
  \filldraw[fill=white,thick] (1,1) circle (.7mm);
  \draw (0.8,1) node[anchor=east] {$v$};
  \draw (2.6,0.9) node[anchor=north] {$e$};
  \draw (1.15,1) node[anchor=south west,violet] {$\mu(e)$};
\end{tikzpicture}
\caption{The admissibility condition. Each edge of $p^{-1}(e) \cap
  E_0(u)$ has label $\mu(e)/k_{u,e}$ and degree $d(u)/k_{u,e}$. There
  are $k_{u,e}$ such edges.}\label{admissiblefig} 
\end{figure}

By lifting the graph of spaces structure, every covering space of a
fibered $2$--complex has a compatible graph of spaces structure, where
either every vertex and edge space is a line, or every vertex and
edge space is a circle. The two cases correspond to whether the
subgroup acts freely on the Bass--Serre tree
or is a GBS group (acting elliptically if it is cyclic); see \cite[Lemmas
  2.6--2.7]{forester2}.
Note that there is an induced surjective morphism $p \co A \to B$ of
underlying graphs, and the covering map is fiber-preserving, with $p$
the induced map on leaf spaces. 

\begin{proposition}\label{admissibleprop}
Let $G$ be a GBS group with labeled graph $(B,\mu)$. There is a
one-to-one correspondence between conjugacy classes of GBS 
subgroups of $G$ (excluding hyperbolic cyclic subgroups) and
admissible branched coverings $(A,\lambda) \to (B,\mu)$.  
\end{proposition}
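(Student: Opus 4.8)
The plan is to derive the correspondence from covering-space theory applied to the fibered $2$--complex $Z = Z_{(B,\mu)}$, whose fundamental group is $G$. Recall that connected covering spaces of $Z$, up to isomorphism, are in natural bijection with conjugacy classes of subgroups of $\pi_1(Z) = G$; a subgroup $H$ corresponds to a cover $\widehat{Z} \to Z$ with $\pi_1(\widehat{Z}) \cong H$. By the discussion preceding the statement (following \cite[Lemmas 2.6--2.7]{forester2}), such a cover carries a compatible graph-of-spaces structure in which either every vertex and edge space is a line or every vertex and edge space is a circle, the circle case occurring exactly when $H$ does not act freely on the Bass--Serre tree of $G$, i.e.\ exactly when $H$ is a GBS group other than an infinite cyclic group acting hyperbolically. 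In the circle case $\widehat{Z}$ is itself a fibered $2$--complex $Z_{(A,\lambda)}$ for some connected, possibly infinite, labeled graph $(A,\lambda)$, and the covering map is fiber-preserving, inducing a surjective graph morphism $p \co A \to B$ on leaf spaces. Thus the first step reduces the statement to showing that connected fiber-preserving coverings $Z_{(A,\lambda)} \to Z_{(B,\mu)}$, up to fiber-preserving isomorphism, are the same thing as admissible branched coverings in the sense of Definition~\ref{admissibledef}.

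For one direction, let $Z_{(A,\lambda)} \to Z_{(B,\mu)}$ be such a covering, with underlying morphism $p$. For a vertex $u$ of $A$, resp.\ an edge $e'$ of $A$, set $d(u)$, resp.\ $d(e')$, equal to the degree of the induced covering of circles $C_u \to C_{p(u)}$, resp.\ $C_{e'} \to C_{p(e')}$; then $d(e') = d(\overline{e'})$ since $C_{e'} = C_{\overline{e'}}$. Admissibility is now a local statement over a single half-edge that reduces to covering theory of the circle: fix $e \in E(B)$ with $\partial_0(e) = v$ and a vertex $u \in p^{-1}(v)$. The mapping cylinder $M_e$ deformation retracts to $C_v$, with $\pi_1(C_e) \hookrightarrow \pi_1(M_e) = \pi_1(C_v) \cong \Z$ of index $\mu(e)$; the component of the preimage of $M_e$ containing $C_u$ is the connected degree-$d(u)$ cover of $M_e$, namely the union of the $M_{e'}$ with $e' \in p^{-1}(e) \cap E_0(u)$, and in it the preimage of $C_e$ has $[\,\Z : \mu(e)\Z + d(u)\Z\,] = \gcd(d(u),\mu(e)) = k_{u,e}$ components, giving condition (1). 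Each such component $C_{e'}$ corresponds to the subgroup $\mu(e)\Z \cap d(u)\Z = \lcm(d(u),\mu(e))\Z$ of $\Z$, hence covers $C_e = C_{p(e')}$ with degree $\lcm(d(u),\mu(e))/\mu(e) = d(u)/k_{u,e}$ (condition (3)) and maps to $C_u$ with degree $\lcm(d(u),\mu(e))/d(u) = \mu(e)/k_{u,e}$, which is $\lambda(e')$ since the label of an edge of a fibered $2$--complex is by definition the degree of the structure map from its edge circle to its initial vertex circle (condition (2)). Conversely, given an admissible branched covering $((A,\lambda),p,d)$, one assembles a fiber-preserving covering $Z_{(A,\lambda)} \to Z_{(B,\mu)}$ by choosing, over each vertex circle and edge circle of $Z_{(B,\mu)}$, the connected circle covers of the degrees prescribed by $d$, and then gluing the mapping cylinders $M_{e'}$ over the cylinders $M_e$ according to (1)--(3) (the condition $d(e') = d(\overline{e'})$ making the gluings of $M_{e'}$ to $M_{\overline{e'}}$ consistent); conditions (1)--(3) are exactly what is needed for the resulting fiber-preserving map to be a local homeomorphism, hence a covering map onto the connected complex $Z_{(B,\mu)}$, surjectivity coming from that of $p$. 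The two constructions are inverse to one another up to the evident isomorphism relation on admissible branched coverings and conjugacy of subgroups, which yields the asserted bijection.

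The hard part is the local computation in the second paragraph: one must move comfortably between finite-index subgroups of $\Z$, covers of the circle, and covers of the mapping cylinders $M_e$, and one must check that the three numerical conditions of Definition~\ref{admissibledef} are not merely necessary (a direct $\gcd$/$\lcm$ count, as above) but also sufficient for a compatible family of local circle-cover data to glue into a genuine global fiber-preserving covering --- the latter requiring that the covering-map property be verified at every cell of $Z_{(A,\lambda)}$, in particular at the vertex circles where many mapping cylinders meet. The remaining ingredients --- the dictionary between conjugacy classes of subgroups and connected covers, and the recognition of a circle-type cover of a fibered $2$--complex as again a fibered $2$--complex --- are standard covering theory and \cite[Lemmas 2.6--2.7]{forester2} respectively. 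A shorter route would be to note that this is essentially \cite[Lemma 5.3]{levitt1} recast in the language of fibered $2$--complexes, and to provide only the translation; I would nonetheless carry out the computation, since it is short and makes transparent where the greatest common divisor in Definition~\ref{admissibledef} comes from.
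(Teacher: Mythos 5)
Your proposal is correct and follows essentially the same route as the paper: reduce to classifying fibered $2$--complex covers of $Z_{(B,\mu)}$, show that admissibility is exactly the local description of the finite covers of the mapping cylinders $M_e$, and assemble these local covers (with degree-matching along the shared vertex and edge circles) for the converse. The only difference is cosmetic: you carry out the local count algebraically via subgroups of $\Z$ and $\gcd$/$\lcm$ (double cosets), where the paper uses the explicit picture of $M_e$ as the mapping torus of a cone on $\abs{\mu(e)}$ points and the cycle decomposition of a power of an $n$--cycle.
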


This result is 
very similar to \cite[Lemma 6.3]{levitt1}, but
we explain it here somewhat differently. 

\begin{proof}
It suffices to classify the fibered $2$--complex covering spaces of
$Z_{(B,\mu)}$. Recall that $Z_{(B,\mu)}$ is a union of mapping
cylinders $M_e$. The admissibility condition is simply a description
of the finite-sheeted covers of these subspaces $M_e$. As a mapping
cylinder, $M_e$ deformation retracts onto $C_{\partial_0(e)}$, and has
infinite cyclic fundamental group. However it is also a fiber bundle
over this circle, giving it the structure of a mapping
\emph{torus}. The fiber is a cone on $n = \abs{\mu(e)}$ points,
denoted by $C(n)$, and the monodromy $\phi\co C(n) \to C(n)$ is the
automorphism which permutes the $n$ points by an $n$--cycle
$\sigma$. Let $z_1, \dotsc, z_n$ be the $n$ points and $c$ the cone
point. Writing $M_e$ as 
\[C(n) \times [0,1] \big\slash (x,0) \sim (\phi(x),1),\]
the image of $c \times [0,1]$ is the singular circle
$C_{\partial_0(e)}$ and the images of $z_i \times [0,1]$ join up to
form the circle $C_e$. The map $C_e \to C_{\partial_0(e)}$ in which
each $z_i \times \{t\}$  maps to $c\times \{t\}$ is the degree
$\mu(e)$ covering map defining $M_e$. 

For any $d\in \N$, the $d$--sheeted covering space $N$ of $M_e$ is the
mapping torus of 
\[\phi^d \co C(n) \to C(n).\]
The permutation $\sigma^d$ decomposes into $k = \gcd(d,n)$ disjoint
$(n/k)$--cycles. Thus the pre-image of $C_e$ in $N$ is $k$ disjoint
circles and $N$ is the mapping cylinder of the map $\bigsqcup_{i=1}^k
S^1 \to S^1$ in which each component maps by degree $\mu(e)/k$. Each
of the circles above $C_e$ covers with degree $d/k$, and the circle
above $C_{\partial_0(e)}$ covers with degree $d$. Now $N$ is exactly
described by the admissibility condition as shown in Figure
\ref{admissiblefig}, with $d = d(u)$. Thus for any covering map of
fibered $2$--complexes, the induced morphism of indexed graphs is an
admissible branched covering. The degree function records the degrees
of each fiber in the cover mapping to its image. (Orientations of the
fibers in the cover are chosen so that these degrees are all
positive.) 

Conversely, let $p \co (A,\lambda) \to (B,\mu)$ be an admissible
branched covering. Recall that $Z_{(A,\lambda)}$ and $Z_{(B,\mu)}$
are, respectively, unions of the subspaces $M_{e'}$ ($e' \in E(A)$)
and $M_e$ ($e \in E(B)$). For each $e \in E(B)$ with $\partial_0(e) =
v$ and each $u \in p^{-1}(v)$ the subspace
\[N_{u,e} \ = \ \bigcup_{e' \in p^{-1}(e) \cap E_0(u)} M_{e'} \]
of $Z_{(A,\lambda)}$ has a degree $d(u)$ covering map $p_{u,e} \co
N_{u,e} \to M_e$, by the admissibility condition. These subspaces
$N_{u,e}$ intersect each other only along the circles $C_u$ and
$C_{e'}$. The restrictions $p_{u,e} \vert_{C_u}$ and $p_{u,e}
\vert_{C_{e'}}$ are coverings of degrees $d(u)$ and $d(e')$
respectively. By adjusting the covering maps on $N_{u,e}$ by
fiber-preserving isotopies, the restrictions of the coverings to the
circles $C_u$ and $C_{e'}$ (for $u \in V(A)$, $e' \in E(A)$) can be
made to all agree on each circle. Then the covering maps $p_{u,e}$
join to give a covering of fibered $2$--complexes. 
\end{proof}

\begin{remark}
Every \emph{finite index} subgroup of $G$ is a GBS subgroup, and these
correspond to the branched coverings $(A,\lambda) \to (B,\mu)$ for
which the morphism $A \to B$ has finite fibers. More generally, if
$H<G$ corresponds to the branched covering $p\co (A,\lambda)  \to
(B,\mu)$, then $[G:H] \ = \ \sum_{u \in p^{-1}(v)} d(u)$, for any
vertex $v$ of $B$. 
\end{remark}

\begin{remark}
Every GBS group is \emph{coherent}, meaning that every finitely
generated subgroup is finitely presented. Since every subgroup is free
or GBS, it suffices to note that a GBS group $G$ with minimal GBS tree
$X$ is finitely generated if and only if $X$ is cocompact. In that
case, $G$ is the fundamental group of a compact fibered $2$--complex,
and is finitely presented. 
\end{remark}

\section{The $2$--complexes $X_{m,n}$ and $X_{(A,\lambda)}$}
\label{complexes-sec} 
Fix positive integers $m,n$ and let $k = \gcd(m,n)$. Let $T_{m,n}$
denote the directed simplicial tree in which every vertex has $m$
incoming edges and $n$ outgoing edges. This tree is the Bass--Serre
tree of $BS(m,n)$ with its standard labeled graph. The orientations
are such that the stable letter $t$ in the presentation
\eqref{presentation} is directed forward, and has axis in $T_{m,n}$
that is a directed line which $t$ shifts forward one unit. 

Recall from Example \ref{bsname} the fibered $2$--complex $Z_{m,n}$.
It is homeomorphic to the presentation $2$--complex for the
presentation of $BS(m,n)$ given in \eqref{presentation}. That CW
complex has one vertex, two edges labeled $a$ and $t$, and one
$2$--cell attached by the boundary word $t a^m t^{-1} a^{-n}$. The
cell structure we want to put on $Z_{m,n}$ is obtained from this by
subdivision and is illustrated in Figure \ref{Zcells}. The initial
$2$--cell  has been split into $k$ $2$--cells, with new $1$--cells
labeled $t_1, \dots, t_{k-1}$. The new $2$--cells are attached by the
words $t_i a^{m/k} t^{-1}_{i+1} a^{-n/k}$ (indices modulo $k$), with
$t_0$ understood to mean $t$. There is still only one vertex. 

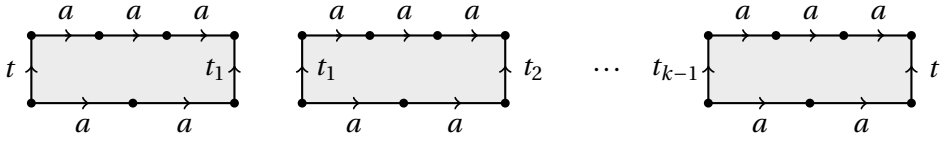
\begin{figure}[!ht]
\begin{tikzpicture}[scale=0.9]
\small

\filldraw[fill=gray!15,thick] (1,1) rectangle (4,2);
\filldraw[fill=gray!15,thick] (5,1) rectangle (8,2);
\filldraw[fill=gray!15,thick] (11,1) rectangle (14,2);

\filldraw[fill=black,thick] (1,1) circle (.5mm);
\filldraw[fill=black,thick] (2.5,1) circle (.5mm);
\filldraw[fill=black,thick] (4,1) circle (.5mm);
\filldraw[fill=black,thick] (5,1) circle (.5mm);
\filldraw[fill=black,thick] (6.5,1) circle (.5mm);
\filldraw[fill=black,thick] (8,1) circle (.5mm);
\filldraw[fill=black,thick] (11,1) circle (.5mm);
\filldraw[fill=black,thick] (12.5,1) circle (.5mm);
\filldraw[fill=black,thick] (14,1) circle (.5mm);

\filldraw[fill=black,thick] (1,2) circle (.5mm);
\filldraw[fill=black,thick] (2,2) circle (.5mm);
\filldraw[fill=black,thick] (3,2) circle (.5mm);
\filldraw[fill=black,thick] (4,2) circle (.5mm);
\filldraw[fill=black,thick] (5,2) circle (.5mm);
\filldraw[fill=black,thick] (6,2) circle (.5mm);
\filldraw[fill=black,thick] (7,2) circle (.5mm);
\filldraw[fill=black,thick] (8,2) circle (.5mm);
\filldraw[fill=black,thick] (11,2) circle (.5mm);
\filldraw[fill=black,thick] (12,2) circle (.5mm);
\filldraw[fill=black,thick] (13,2) circle (.5mm);
\filldraw[fill=black,thick] (14,2) circle (.5mm);

\draw[thick,->] (1,1.54) -- (1,1.55);
\draw[thick,->] (4,1.54) -- (4,1.55);
\draw[thick,->] (5,1.54) -- (5,1.55);
\draw[thick,->] (8,1.54) -- (8,1.55);
\draw[thick,->] (11,1.54) -- (11,1.55);
\draw[thick,->] (14,1.54) -- (14,1.55);

\draw[thick,->] (1.84,1) -- (1.85,1);
\draw[thick,->] (3.34,1) -- (3.35,1);
\draw[thick,->] (5.84,1) -- (5.85,1);
\draw[thick,->] (7.34,1) -- (7.35,1);
\draw[thick,->] (11.84,1) -- (11.85,1);
\draw[thick,->] (13.34,1) -- (13.35,1);

\draw[thick,->] (1.5,2) -- (1.6,2);
\draw[thick,->] (2.5,2) -- (2.6,2);
\draw[thick,->] (3.5,2) -- (3.6,2);
\draw[thick,->] (5.5,2) -- (5.6,2);
\draw[thick,->] (6.5,2) -- (6.6,2);
\draw[thick,->] (7.5,2) -- (7.6,2);
\draw[thick,->] (11.5,2) -- (11.6,2);
\draw[thick,->] (12.5,2) -- (12.6,2);
\draw[thick,->] (13.5,2) -- (13.6,2);

\draw (9.52,1.46) node {$\dotsm$};

\draw (1.75,0.9) node[anchor=north] {$a$};
\draw (3.25,0.9) node[anchor=north] {$a$};
\draw (5.75,0.9) node[anchor=north] {$a$};
\draw (7.25,0.9) node[anchor=north] {$a$};
\draw (11.75,0.9) node[anchor=north] {$a$};
\draw (13.25,0.9) node[anchor=north] {$a$};
\draw (1.5,2.1) node[anchor=south] {$a$};
\draw (2.5,2.1) node[anchor=south] {$a$};
\draw (3.5,2.1) node[anchor=south] {$a$};
\draw (5.5,2.1) node[anchor=south] {$a$};
\draw (6.5,2.1) node[anchor=south] {$a$};
\draw (7.5,2.1) node[anchor=south] {$a$};
\draw (11.5,2.1) node[anchor=south] {$a$};
\draw (12.5,2.1) node[anchor=south] {$a$};
\draw (13.5,2.1) node[anchor=south] {$a$};
\draw (0.9,1.5) node[anchor=east] {$t$};
\draw (4,1.5) node[anchor=east] {$t_1$};
\draw (5.05,1.5) node[anchor=west] {$t_1$};
\draw (8.1,1.5) node[anchor=west] {$t_2$};
\draw (11,1.5) node[anchor=east] {$t_{k-1}$};
\draw (14.1,1.5) node[anchor=west] {$t$};

\end{tikzpicture}
\caption{The cell structure for $Z_{m,n}$ when $m=3k$,
  $n=2k$. 
  }\label{Zcells}
\end{figure}

The $2$--complex $X_{m,n}$ is defined to the the universal cover of
$Z_{m,n}$ with the induced cell structure. It is tiled entirely by
quadrilateral $2$--cells of the kind shown in Figure \ref{Zcells},
with sides of length $1, m/k, 1, n/k$. The reason for subdividing the
initial cell structure of $Z_{m,n}$ is to increase the homogeneity of
$X_{m,n}$ and make it as symmetric as possible. 

We extend the definition to allow $m,n \in \Z-\{0\}$ by declaring
$X_{m,n} = X_{\abs{m},\abs{n}}$. Then $BS(m,n)$ is a lattice in
$\Aut(X_{m,n})$ for any $m,n$ (since $X_{m,n}$ is the universal cover
of $Z_{\pm m, \pm n}$). However, when discussing an unspecified
$X_{m,n}$, the default assumption will be that $m,n > 0$.

\subsection{Combinatorial description}\label{combinatorialsect}
The space $X_{m,n}$ is homeomorphic to 
$T_{m,n} \times \R$, but combinatorially and geometrically it is very
far from being a product (unless $m=n$). Let $\pi \co X_{m,n} \to
T_{m,n}$ be the projection map. The $1$--cells of $X_{m,n}$ will be
called \emph{horizontal} if they map to a vertex of $T_{m,n}$ and 
\emph{vertical} if they map to an edge. The vertical edges inherit 
orientations from the edges of $T_{m,n}$, consistent with the
orientations labeled $t$ or $t_i$  in Figure \ref{Zcells}. We view
this direction as the ``upward'' direction in $X_{m,n}$ and in
$T_{m,n}$. (The horizontal $1$--cells are \emph{not} oriented; the
orientations labeled $a$ in Figure \ref{Zcells} should be ignored when
considered as cells in $X_{m,n}$.) 

For any $v \in V(T_{m,n})$ the pre-image $\pi^{-1}(v)$ will be called
a \emph{branching line}. The pre-image of a closed edge of $T_{m,n}$
will be called a \emph{strip}. Note that the branching lines are tiled
by horizontal edges. We define an \emph{$(i,j)$--cell} to be a
$2$--cell attached by a combinatorial path consisting of one upward
vertical edge, $i$ horizontal edges, one downward vertical edge, and
$j$ horizontal edges. An \emph{$(i,j)$--strip} is a bi-infinite
sequence of $(i,j)$--cells, joined along their vertical edges. Every
strip in $X_{m,n}$ is a $(m/k,n/k)$--strip.

We may regard $X_{m,n}$ as being assembled from branching lines and
$(m/k,n/k)$--strips just as $T_{m,n}$ is made of vertices and
edges. Each branching line has $n$ strips above it and $m$ strips
below it. When attaching a strip \emph{above} a branching line, there
are $n/k$ ways to do this; if we identify the vertices of the
branching line with $\Z$, the vertical edges of the strip will meet a
coset $i + (n/k)\Z$ for some $i$. In $X_{m,n}$, the $n$ strips are
joined along the cosets $i + (n/k)\Z$ for $i = 1, \dotsc, n$. Thus
every vertex on the line has $k$ outgoing vertical edges. 

The strips below the branching line are attached in a similar manner;
there are $m$ of them, attached along the cosets $i + (m/k)\Z$ for $i
= 1, \dotsc m$. Each vertex in the line has $k$ incoming vertical
edges. This description of the neighborhood of every branching line,
together with the projection to $T_{m,n}$, completely determines
$X_{m,n}$ as a CW complex.

\subsection{Metric structure}\label{metricsect}
The complex $X_{m,n}$ admits a piecewise-Riemannian metric on which
$\Aut(X_{m,n})$ acts by isometries. The construction is based on
\cite{farbmosher}, which treats the case of $X_{1,n}$. If $m = n$ then
each $2$--cell is isometric to a Euclidean $m/k \times 1 = 1 \times 1$
rectangle, and all $1$--cells have length $1$. If $m \not= n$ then
each $2$--cell is isometric to a right-angled quadrilateral region in
the hyperbolic plane (a ``horobrick'') whose vertical sides are
geodesics of length $\abs{\log(m/n)}$ and whose horizontal sides are
concentric horocyclic arcs of lengths $m/k$ and $n/k$. 
This metric gives every horizontal $1$--cell length $1$. 

When $m \not= n$, each strip is isometric to the region in $\Hyp^2$
between two concentric horocycles of distance $\abs{\log(m/n)}$
apart. Since the branching lines are horocycles, $X_{m,n}$ has
concentrated positive curvature along these lines, and is definitely
not hyperbolic in any global sense. On the other hand, each strip in
$X_{m,m}$ is isometric to a Euclidean strip of width $1$, and
$X_{m,m}$ is isometric to $T_{m,m} \times \R$.

\subsection{Projection} 
The complex $X_{1,1}$ is the Euclidean plane tiled by unit squares. It
admits rotations and the branching lines are not invariant. In all
other cases, branching lines and strips in $X_{m,n}$ are preserved by
$\Aut(X_{m,n})$. Hence the projection $\pi \co X_{m,n} \to T_{m,n}$
induces a homomorphism $\pi_*\co \Aut(X_{m,n}) \to \Aut(T_{m,n})$. Now
choose consistent orientations for all of the branching lines. Every
$g \in \Aut(X_{m,n})$ either preserves the orientations of all
branching lines, or reverses all of them. We will call $g$ 
\emph{orientation preserving} or \emph{orientation reversing}
accordingly. 

\begin{lemma}\label{fixedray}
Suppose $m < n$. Let $\alpha$ be a directed ray in $T_{m,n}$ and
suppose $\pi_*(g)$ fixes $\alpha$ pointwise, for some orientation
preserving $g \in \Aut(X_{m,n})$. Then $g$ fixes $\pi^{-1}(\alpha)$
pointwise. 
\end{lemma}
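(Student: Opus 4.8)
The plan is to exploit the asymmetry $m<n$: going \emph{upward} (in the $t$-direction) a branching line has $n>m$ outgoing strips clustered into $k$ cosets, while going downward there are $m$. The key point is that a tree automorphism fixing a directed ray $\alpha$ pointwise must fix, at each vertex $v$ of $\alpha$, the unique outgoing edge of $\alpha$, and hence $g$ must carry the strip above $v$ in the $\alpha$-direction to itself. First I would set up notation: let $v_0, v_1, v_2, \dots$ be the vertices of $\alpha$, with $v_{i+1}$ the terminal vertex of the $i$-th edge; let $L_i = \pi^{-1}(v_i)$ be the branching lines and $S_i = \pi^{-1}([v_i,v_{i+1}])$ the strips along $\alpha$. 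Since $\pi_*(g)$ fixes $\alpha$, we have $g(L_i) = L_i$ and $g(S_i) = S_i$ for all $i$. Because $g$ is orientation preserving, $g|_{L_i}$ is a translation of the branching line $L_i \cong \R$ (tiled by horizontal unit edges), so it is determined by a single integer $c_i \in \Z$, the translation amount. The whole problem reduces to showing all $c_i = 0$.

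The engine is a rigidity relation between consecutive $c_i$'s coming from the strip $S_i$. A strip $S_{i}$ is an $(m/k,n/k)$--strip: its vertical edges meet $L_i$ along a coset of $(m/k)\Z$ (it is \emph{below} $L_{i+1}$) — wait, I need to be careful with up/down. The edge $[v_i, v_{i+1}]$ is outgoing from $v_i$, so relative to $L_{i}$ the strip $S_i$ sits \emph{above}, attached along a coset $a_i + (n/k)\Z$ of the vertices of $L_i$; relative to $L_{i+1}$ it sits \emph{below}, attached along a coset $b_i + (m/k)\Z$ of the vertices of $L_{i+1}$. Since $g$ preserves $S_i$ and is a translation by $c_i$ on $L_i$, it must preserve the set of vertical edges of $S_i$, i.e.\ preserve the coset $a_i + (n/k)\Z$; this forces $(n/k) \mid c_i$. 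Likewise, viewing $S_i$ from above, $g$ translates $L_{i+1}$ by $c_{i+1}$ and preserves the coset $b_i + (m/k)\Z$, so $(m/k)\mid c_{i+1}$. Moreover $g$ must match up how vertical edges of $S_i$ attach below and above: a vertical edge at position $x \in a_i+(n/k)\Z$ on $L_i$ connects to the vertical edge at the corresponding position on $L_{i+1}$, and this correspondence is (up to the fixed gluing data) a specific affine bijection between the two cosets. Since $g$ commutes with going up through $S_i$, the translation amounts are forced to satisfy a relation of the form $c_{i+1} = (\text{scaling})\cdot c_i$ — and the scaling here, going from the $n/k$-spaced coset up to the $m/k$-spaced coset, multiplies lengths by $(m/k)/(n/k) = m/n < 1$. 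Concretely, I expect to extract $c_i = \frac{n}{m}\,c_{i+1}$, i.e.\ $c_{i+1} = \frac{m}{n} c_i$.

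Granting that relation, the conclusion is immediate: $c_0 = \frac{n}{m} c_1 = \bigl(\frac{n}{m}\bigr)^2 c_2 = \cdots = \bigl(\frac{n}{m}\bigr)^N c_N$ for all $N$, and since $\abs{n/m} > 1$ while each $c_N \in \Z$, the only possibility is $c_N = 0$ for all $N$, hence $c_0 = 0$. So $g$ fixes $L_0$ pointwise; but then running the relation the other direction, $c_i = \bigl(\frac{m}{n}\bigr)^i c_0 = 0$ for all $i$, so $g$ fixes every branching line $L_i$ pointwise. Finally, an automorphism that fixes $L_i$ and $L_{i+1}$ pointwise and preserves the strip $S_i$ must fix $S_i$ pointwise: the vertical edges of $S_i$ have endpoints on $L_i \cup L_{i+1}$ which are now all fixed, there is a unique vertical edge (in the fixed orientation) between a vertex of $L_i$ and its partner in $L_{i+1}$, so each vertical edge is fixed, and the remaining horizontal edges of $S_i$ lie on the $L_i$, already fixed; hence each $2$--cell of $S_i$ has its entire boundary fixed and so is fixed. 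Thus $g$ fixes $\pi^{-1}(\alpha) = \bigcup_i S_i$ pointwise.

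The main obstacle is the second paragraph: pinning down precisely the affine relation $c_{i+1} = \frac{m}{n} c_i$ from the combinatorial gluing description in Section~\ref{combinatorialsect}. One must track how a translation of $L_i$ that preserves the coset $a_i+(n/k)\Z$ of attaching vertices induces, via the strip's vertical edges, a translation of $L_{i+1}$ preserving the coset $b_i+(m/k)\Z$, and verify the induced translation amount is scaled by exactly $(m/k)/(n/k)$. This is where orientation-preservation is essential (it rules out reflections, which would not contract) and where $m<n$ delivers the contraction; the same argument with $m>n$ would go through for \emph{downward} rays instead, and for $m=n$ it fails, consistent with $X_{m,m}$ admitting translations of all branching lines.
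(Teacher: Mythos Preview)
Your argument is correct and essentially complete; the relation $c_{i+1} = \tfrac{m}{n}c_i$ that you flag as the main obstacle does follow exactly as you outline. Indexing the vertical edges of $S_i$ by $j\in\Z$, the $j$-th edge runs from $a_i + j(n/k)$ on $L_i$ to $b_i + j(m/k)$ on $L_{i+1}$; since $g$ must send the $j$-th edge to the $j'$-th for some fixed shift $j' - j$, comparing the two endpoints gives $c_i k/n = j'-j = c_{i+1}k/m$, which is your relation. The contraction argument then finishes it.

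This is a genuinely different route from the paper's. The paper invokes the piecewise-hyperbolic metric from Section~\ref{metricsect}: when $m<n$, each $(m/k,n/k)$--strip is isometric to a region between concentric horocycles, so $\pi^{-1}(\alpha)$ embeds isometrically as a closed horoball in $\Hyp^2$ with the horocycles as the branching lines. An orientation-preserving isometry fixing this horoball is a horizontal translation in the upper half-plane model, but the $2$--cells have Euclidean widths growing without bound as one moves up, so no nontrivial translation can preserve the tiling. Your approach avoids the metric entirely and works directly with the combinatorial gluing data, tracking the integer shifts $c_i$ and using the scaling $m/n<1$ to force them to zero. The two arguments are really the same phenomenon viewed from different angles: your contraction factor $m/n$ is precisely the ratio by which horocyclic lengths scale between consecutive branching lines in the hyperbolic picture. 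The paper's version is shorter once the metric is in hand; yours is more self-contained and makes the role of the inequality $m<n$ more transparent.
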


A similar statement holds for ``anti-directed'' rays if $m > n$. 

\begin{proof}
The metric on $\pi^{-1}(\alpha)$ is isometric to a closed horoball in
$\Hyp^2$. Thus $g$ acts by a hyperbolic isometry preserving this
horoball. In the upper half plane model with the center of the
horoball at infinity, the only such isometries are reflections about
vertical lines and horizontal translations. Since $g$ is orientation
preserving, it is a translation. However, in this model, the
$2$--cells are rectangular regions of the form $[a,b] \times [c, d]$
with width $\abs{b-a}$ getting arbitrarily large as one moves upward
in the plane. No horizontal translation can preserve such a cell
structure, except for the identity. 
\end{proof}

\begin{corollary}\label{injective}
If $m \not= n$ then $\pi_*\co \Aut(X_{m,n}) \to \Aut(T_{m,n})$ is
continuous and injective. 
\end{corollary}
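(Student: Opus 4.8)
The plan is to prove continuity and injectivity separately. For continuity I would note that $\pi\co X_{m,n}\to T_{m,n}$ is a cellular map, and that when $m\neq n$ the group $\Aut(X_{m,n})$ preserves branching lines and strips, so $\pi_*$ is well defined. Given a subbasic open set $U^{T_{m,n}}_{\tau_1\to\tau_2}$ of $\Aut(T_{m,n})$, I would fix a single cell $\sigma_1$ of $X_{m,n}$ with $\pi(\sigma_1)=\tau_1$; then $g$ lies in $\pi_*^{-1}\bigl(U^{T_{m,n}}_{\tau_1\to\tau_2}\bigr)$ exactly when $g(\sigma_1)\subseteq\pi^{-1}(\tau_2)$, so this preimage is the union of the subbasic sets $U^{X_{m,n}}_{\sigma_1\to\sigma_2}$ over all cells $\sigma_2\subseteq\pi^{-1}(\tau_2)$ having the same dimension as $\sigma_1$. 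Being a union of open sets, it is open, so $\pi_*$ is continuous.

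For injectivity I would first reduce to showing that every $g\in\ker\pi_*$ is orientation preserving, and then deduce $g=\id$ from Lemma~\ref{fixedray}. Suppose $g\in\ker\pi_*$, so $g$ fixes every cell of $T_{m,n}$ and hence sends every branching line and every strip to itself; assume it is orientation preserving, and that $m<n$ (the case $m>n$ uses the anti-directed form of Lemma~\ref{fixedray}, and $m=n$ is excluded). Since each vertex of $T_{m,n}$ has positive in- and out-degree, every closed cell of $T_{m,n}$ lies on some directed ray, so for any cell $\sigma$ of $X_{m,n}$ there is a directed ray $\alpha$ with $\sigma\subseteq\pi^{-1}(\alpha)$. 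As $\pi_*(g)=\id$ fixes $\alpha$ pointwise and $g$ is orientation preserving, Lemma~\ref{fixedray} shows $g$ fixes $\pi^{-1}(\alpha)$, hence $\sigma$, pointwise. Since $\sigma$ was arbitrary, $g$ induces the identity permutation of the cells of $X_{m,n}$, i.e.\ $g=\id$ in $\Aut(X_{m,n})$.

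The hard part will be proving that every $g\in\ker\pi_*$ is orientation preserving. Suppose not; identifying each branching line $L$ with $\Z$, the restriction $g|_L$ is a reflection $x\mapsto c_L-x$. Each strip incident to $L$ is preserved by $g$ and meets $L$ in the vertical edges over a single coset of $d\Z$, with $d=n/k$ for strips above $L$ and $d=m/k$ for strips below. From the description in Section~\ref{combinatorialsect}, the strips above $L$ realise every coset of $(n/k)\Z$ and those below every coset of $(m/k)\Z$, so $g|_L$ preserves every coset of $(n/k)\Z$ and of $(m/k)\Z$; but a reflection of $\Z$ can preserve every coset of $d\Z$ only when $d\leq 2$. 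Hence $m/k,n/k\leq 2$, and $m\neq n$ forces $\{m/k,n/k\}=\{1,2\}$; say $m/k=1$ and $n/k=2$, the opposite case being symmetric via strips below. Preserving both cosets of $2\Z$ forces $c_L$ even, so $g|_L$ is a reflection about a vertex, for every $L$. Now pick a strip $S$ above $L$, joining $L$ to a branching line $L'$; on the $L$-side the cells of $S$ have $n/k=2$ horizontal edges and on the $L'$-side only $m/k=1$. As $\pi_*(g)$ fixes the endpoints of $\pi(S)$, $g$ does not swap $L$ and $L'$, so $g|_S$ is a reflection of the strip determined by $g|_L$. If its axis meets $L$ in the coset along which $S$ attaches, it runs along a vertical edge of $S$ and meets $L'$ at a vertex; otherwise it passes through the center of a cell of $S$ and meets $L'$ at a horizontal-edge midpoint, contradicting that $g|_{L'}$ is a vertex reflection. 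So the axis of $g|_L$ must lie in the attaching coset of every strip above $L$ — impossible, since those strips attach along both cosets of $2\Z$. This contradiction gives the orientation claim, completing the proof. I expect this coset-versus-reflection analysis, rather than the invocation of Lemma~\ref{fixedray}, to be the crux.
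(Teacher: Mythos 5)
Your proof is correct, and its overall skeleton is the paper's: continuity via expressing $\pi_*^{-1}\bigl(U^{T_{m,n}}_{\tau_1\to\tau_2}\bigr)$ as a union of subbasic sets of $\Aut(X_{m,n})$, and injectivity by splitting $\ker\pi_*$ into the orientation-preserving case (killed by Lemma~\ref{fixedray}, since every cell of $T_{m,n}$ lies on a directed, or anti-directed, ray) and the orientation-reversing case, handled by a combinatorial reflection analysis. Where you genuinely diverge is in that last case. The paper argues that $g$ restricts to a reflection on each strip whose fixed set is a vertical edge or a line through a cell center, and then compares fixed-point sets: for $n/k>2$ it uses two strips above a common branching line attached along cosets one unit apart, and for the residual case $n/k=2$, $m/k=1$ it invokes (without details) a configuration of four strips projecting to a path of two downward edges followed by two upward edges. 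You instead first observe that $g|_L$ must preserve every coset of $(n/k)\Z$ and $(m/k)\Z$, which forces $m/k,n/k\leq 2$ at one stroke, and then dispose of $\{m/k,n/k\}=\{1,2\}$ by the vertex-reflection versus edge-midpoint-reflection mismatch between the two ends of a single strip, using that every branching line carries upward strips along both cosets of $2\Z$. Both arguments rest on the same structural description from Section~\ref{combinatorialsect}; yours buys a cleaner reduction in the generic case and, more significantly, an explicit and complete treatment of the residual case that the paper only sketches, while the paper's two-strip comparison is perhaps more directly tied to the geometric picture of reflection axes.
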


\begin{proof}
For continuity note that the pre-image of $U^{T_{m,n}}_{\sigma
\to \tau}$ is the union of the sets $U^{X_{m,n}}_{\tilde{\sigma} \to
\tilde{\tau}}$ for $\tilde{\sigma} \in \pi^{-1}(\sigma)$,
$\tilde{\tau}\in \pi^{-1}(\tau)$. For injectivity, suppose first that
$g \in \ker(\pi_*)$ is orientation preserving. Then $g = \id$ by Lemma
\ref{fixedray}, since $T_{m,n}$ is a union of rays of the relevant
type (directed or anti-directed). 

If $g$ is orientation reversing, then it acts on every branching line
as a reflection with a unique fixed point, and similarly on every
strip as a reflection. In the latter case, the line segment of fixed
points is either a vertical edge or it passes through the center of a
$2$--cell. If, say, $m < n$ and $n/k > 2$, consider two strips whose
lower sides are the same branching line, with vertical edges joined
along cosets one unit apart. Then the fixed point sets of the strips
cannot agree on the branching line and we have a contradiction. If
$n/k = 2, m/k=1$ then one finds a similar contradiction by considering
an arrangement of four strips whose projection to $T_{m,n}$ is a
segment of two downward edges followed by two upward edges. 
\end{proof}

\begin{remark}\label{notembedding}
The map $\pi_*$ is not an embedding. The topology on $\Aut(X_{m,n})$
is strictly finer than the subspace topology on
$\pi_*(\Aut(X_{m,n}))$. One can show that $\pi_*(U^{X_{m,n}}_{\sigma
\to \tau})$ is not open in $\pi_*(\Aut(X_{m,n}))$, for any cells
$\sigma, \tau$ with $U^{X_{m,n}}_{\sigma \to \tau} \not=\emptyset$. 

\begin{proof}[Proof in the case $\sigma = \tau$]
Note that $\pi_*(U^X_{\sigma \to \sigma})$ contains the identity
element of $\Aut(T) \cap \pi_*(\Aut(X))$. Every basic neighborhood of
the identity has the form $U^T_{\sigma_1 \to \sigma_1} \cap \dotsm
\cap U^T_{\sigma_{\ell} \to \sigma_{\ell}} \cap \pi_*(\Aut(X))$. Let
$S \subset T$ be a finite subtree containing $\pi(\sigma), \sigma_1,
\dotsc, \sigma_{\ell}$. The subcomplex $\pi^{-1}(S)$ admits a
non-trivial shift which projects to the identity on $S$.  This shift
extends to an automorphism $g \in \Aut(X_{m,n})$. Now $\pi_*(g) \in
U^T_{\sigma_1 \to \sigma_1} \cap \dotsm \cap U^T_{\sigma_{\ell} \to
\sigma_{\ell}} \cap \pi_*(\Aut(X))$ while $g \not\in U^X_{\sigma \to
\sigma}$. Since $\pi_*$ is injective, $\pi_*(g) \not\in
\pi_*(U^X_{\sigma \to \sigma})$. Hence no basic neighborhood of the
identity is contained in $\pi_*(U^X_{\sigma \to \sigma})$. 
\end{proof}
\end{remark}

\subsection{General labeled graphs}
For any labeled graph $(A,\lambda)$ let $T_{(A,\lambda)}$ denote the 
corresponding Bass--Serre tree. We will define $X_{(A,\lambda)}$ to be
the universal cover of $Z_{(A,\lambda)}$ with a suitable cell
structure. 

For each $v$ put a cell structure on the circle $C_v$ with one 
vertex and one edge. Given $e \in E(A)$, let $n_e = \abs{\lambda(e)}$,
$m_e = \abs{\lambda(\overline{e})}$, and $k_e = \gcd(m_e,n_e)$. The
annulus $M_e \cup M_{\overline{e}}$ has boundary curves of lengths
$n_e$ and $m_e$ attached to $C_{\partial_0(e)}$ and
$C_{\partial_1(e)}$ respectively. Tile this annulus with $k_e$
$(m_e/k_e, n_e/k_e)$--cells. Doing this for every edge we obtain a
cell structure for $Z_{(A,\lambda)}$, which then induces one on
$X_{(A,\lambda)}$. 

Every strip above the annulus $M_e \cup M_{\overline{e}}$ is an
$(m_e/k_e, n_e/k_e)$--strip, and $X_{(A,\lambda)}$ is
homeomorphic to $T_{(A,\lambda)} \times \R$. For each $e$, every
branching line covering $C_{\partial_0(e)}$ has $n_e$ strips
above it covering $M_e \cup M_{\overline{e}}$, attached along the
cosets $i + (n_e/k_e)\Z$ for $i = 1, \dotsc, n_e$. Every branching
line covering $\partial_1(e)$ has $m_e$ strips below it covering $M_e
\cup M_{\overline{e}}$, attached along the cosets $i + (m_e/k_e)\Z$
for $i = 1, \dotsc, m_e$. 

The space $X_{(A,\lambda)}$ admits an
$\Aut(X_{(A,\lambda)})$--invariant metric using the construction from 
Section \ref{metricsect}. 
Each $(m_e/k_e, n_e/k_e)$--cell is metrized as a horobrick of
height $\abs{\log(m_e/n_e)}$ with horocyclic sides of lengths $m_e/k_e$
and $n_e/k_e$ (if $m_e \not= n_e)$, or as a Euclidean $1 \times 1$
square (if $m_e = n_e$). The $(m_e/k_e, n_e/k_e)$--strips are then
horocyclic strips or Euclidean strips accordingly. This metric on
$X_{(A,\lambda)}$ is quasi-isometric, but not isometric, to the one
used by Whyte in \cite{whyte}. In the latter paper, each
$(m,n)$--horocyclic strip has width $1$ and constant curvature
$-\abs{\log(m/n)}$, whereas here it has width $\abs{\log(m/n)}$
and constant curvature $-1$. 

\begin{remark}
The notions of \emph{orientation preserving} and \emph{reversing}
automorphisms of $X_{m,n}$ apply equally well to $X_{(A,\lambda)}$
whenever $X_{(A,\lambda)} \not\cong X_{1,1}$. If $G$ is the GBS group
defined by $(A,\lambda)$, then these notions agree with the
orientation character on $G$. That is, $g\in G <
\Aut(X_{(A,\lambda)})$ is orientation preserving if and only if
$\chi(g) = 1$. 

In this way, the orientation character extends to a homomorphism $\chi
\co \Aut(X_{(A,\lambda)}) \to \{\pm 1\}$, even if the modular 
homomorphism $q$ does not. 
\end{remark}

\subsection{Torsion-free lattices in $X_{m,n}$}\label{latticesec}
One simple way for a GBS group to be a lattice in $\Aut(X_{m,n})$ is
if its labeled graph $(A, \lambda)$ satisfies $X_{(A,\lambda)} \cong
X_{m,n}$. The next result gives a criterion for this. 

\begin{proposition}\label{latticeprop}
Let G be the GBS group defined by $(A,\lambda)$ and suppose there is a
directed graph structure $E(A) = E^+(A) \sqcup E^-(A)$ on $A$ such
that 
\begin{enumerate}
\item\label{i1} for every $v \in V(A)$,
  \[ \sum_{e \in E^+_0(v)} \abs{\lambda(e)} \ = \ n \ \text{
    and } \ \sum_{e \in E^-_0(v)} \abs{\lambda(e)} \ = \ m \] 
\item\label{i2} for every $e \in E^+(A)$, let $n_e =
  \abs{\lambda(e)}$, $m_e = \abs{\lambda(\overline{e})}$, $k_e =
  \gcd(m_e, n_e)$, and $k = \gcd(m,n)$; then 
  \[ n_e / k_e \ = \ n/k
  \ \text{ and } \ m_e / k_e \ = \ m/k.\]
\end{enumerate}
Then $X_{(A,\lambda)} \cong X_{m,n}$, and hence $G$ is a lattice in\/
$\Aut(X_{m,n})$. 
\end{proposition}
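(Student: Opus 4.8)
The plan is to read off the combinatorial recipe for $X_{(A,\lambda)}$ from Section~\ref{complexes-sec} and to check that, branching line by branching line, it agrees with the recipe for $X_{m,n}$ given in Section~\ref{combinatorialsect}; since that recipe determines the complex, the two will then be isomorphic. First I would match the underlying directed trees: pulling the partition $E(A) = E^+(A) \sqcup E^-(A)$ back to $T_{(A,\lambda)}$ makes it a directed tree, and a vertex over $v \in V(A)$ has exactly $[G_v : G_e] = \abs{\lambda(e)}$ edges lying over each $e \in E_0(v)$ by \eqref{labelindex}. Hence by condition \eqref{i1} every such vertex has $\sum_{e \in E^+_0(v)} \abs{\lambda(e)} = n$ outgoing and $\sum_{e \in E^-_0(v)} \abs{\lambda(e)} = m$ incoming edges. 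As a directed tree in which every vertex has $m$ incoming and $n$ outgoing edges is unique up to isomorphism, $T_{(A,\lambda)} \cong T_{m,n}$, compatibly with the projections $\pi$.

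Next I would compare the strips and their gluings. Every strip of $X_{(A,\lambda)}$ lies over an annulus $M_e \cup M_{\overline{e}}$ and is an $(m_e/k_e, n_e/k_e)$--strip, which by condition \eqref{i2} is an $(m/k, n/k)$--strip, matching $X_{m,n}$. Now fix a branching line $L$ lying over $C_v$. For each $e \in E^+_0(v)$, the line $L$ has $n_e$ strips above it covering $M_e \cup M_{\overline{e}}$, attached along the cosets $i + (n_e/k_e)\Z = i + (n/k)\Z$, $i = 1, \dotsc, n_e$; since $n_e = (n/k)k_e$, each coset of $(n/k)\Z$ receives exactly $k_e$ of these strips. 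Summing over $e \in E^+_0(v)$ — and observing that conditions \eqref{i1} and \eqref{i2} force $\sum_{e \in E^+_0(v)} k_e = k$ — the line $L$ has $n$ strips above it, exactly $k$ of them in each coset of $(n/k)\Z$, which is precisely the local picture of $X_{m,n}$. The identical computation below $L$, using condition \eqref{i1} for $E^-_0(v)$, shows that $L$ has $m$ strips below it, exactly $k$ in each coset of $(m/k)\Z$, again as in $X_{m,n}$.

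Thus $X_{(A,\lambda)}$ and $X_{m,n}$ are both built from $(m/k,n/k)$--strips glued to branching lines over isomorphic directed trees, with identical gluing data at every branching line, so $X_{(A,\lambda)} \cong X_{m,n}$ by the discussion in Section~\ref{combinatorialsect}. Concretely, one fixes an isomorphism $T_{(A,\lambda)} \to T_{m,n}$ of directed trees and extends it to a cellular isomorphism by working outward through the tree, using the matching local data to carry one more strip and branching line across at each stage. Finally, $G = \pi_1(Z_{(A,\lambda)})$ acts freely and cellularly on the universal cover $X_{(A,\lambda)} \cong X_{m,n}$ with quotient $Z_{(A,\lambda)}$, and all of its cell stabilizers are trivial; hence $G$ is discrete in $\Aut(X_{m,n})$, and since $\Aut(X_{m,n})$ acts cocompactly (it contains $BS(m,n)$), Proposition~\ref{blprop} shows that $G$ is a uniform lattice in $\Aut(X_{m,n})$, in particular a lattice.

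The step I expect to need the most care is the assertion that the branching-line data determines the complex: turning it into the explicit isomorphism above requires an inductive extension over the tree in which the cyclic positions of the vertical edges of each strip, read along both of its bounding branching lines, are kept mutually consistent. Everything else reduces to bookkeeping with indices and cosets, controlled by \eqref{labelindex} and the constructions of Section~\ref{complexes-sec}.
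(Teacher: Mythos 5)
Your proposal is correct and follows essentially the same route as the paper: condition (1) identifies the directed tree with $T_{m,n}$, condition (2) makes every strip an $(m/k,n/k)$--strip, and the coset bookkeeping at each branching line (with $\sum_{e} k_e = k$, which you make explicit) shows the local gluing data matches the description of $X_{m,n}$ in Section~\ref{combinatorialsect}, which determines the complex. Your added closing step via Proposition~\ref{blprop} is a harmless elaboration of what the paper leaves implicit.
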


\begin{proof}
Condition \eqref{i1} says that the tree $T_{(A,\lambda)}$, with
directed graph structure induced from $A$, is isomorphic to
$T_{m,n}$. This is so because the two sums count the numbers of strips
entering (resp. exiting) each branching line in
$X_{(A,\lambda)}$. Condition \eqref{i2} says that every strip in
$X_{(A,\lambda)}$ is a $(m/k,n/k)$--strip. It remains to examine how
these strips join the branching lines. 

Fix a vertex $v\in V(A)$ and a branching line $L$ covering $C_v$. For
each $e \in E^+(A) \cap E_0(v)$ there are $\abs{\lambda(e)}$ strips
above $L$ mapping to $M_e \cup M_{\overline{e}}$. These are
attached along the cosets $i + (n/k)\Z$ for $i = 1, \dotsc,
\abs{\lambda(e)}$. Each coset has the same number of such strips
attached to it (namely, $k_e$). Hence, overall, the $n$ strips above
$L$ are distributed evenly among the cosets of $(n/k)\Z$, with $k$ of
them attached along each one. The strips attached below $L$ are
also equidistributed among the cosets of $(m/k)\Z$. Now
$X_{(A,\lambda)}$ matches the description of $X_{m,n}$ from Section 
\ref{combinatorialsect}. 
\end{proof}

Now suppose $m\not= n$ and consider a general torsion-free uniform 
lattice $G$ in $\Aut(X_{m,n})$. It acts freely and cocompactly on
$X_{m,n}$ by Proposition \ref{blprop}. Note that every
automorphism preserves the directed structure on $T_{m,n}$ because
$m\not= n$. In particular, no strip has its sides exchanged, so every
strip covers an annulus in $X_{m,n}/G$. Each branching line covers a
circle. The quotient is then a compact fibered $2$--complex,
homeomorphic to some $Z_{(A,\lambda)}$. Moreover the graph $A$ is a
directed graph, with directed structure induced by that of $T_{m,n}$.

We put a cell structure on $Z_{(A,\lambda)}$ by identifying it with
$X_{m,n}/G$. There is a \emph{length function} $\ell \co V(A) \sqcup
E(A) \to \N$ defined as follows. For $v \in V(A)$, $\ell(v)$ is the
combinatorial length of the circle $C_v$. For $e \in E(A)$, $\ell(e)$
is the number of $2$--cells tiling the annulus $M_e \cup
M_{\overline{e}}$ (that is, its combinatorial girth). Note that
$\ell(e) = \ell(\overline{e})$ for all $e\in E(A)$. The edges in
$M_e\cup M_{\overline{e}}$ crossing from one boundary component to the
other are called \emph{vertical edges}, since they are the images of
vertical edges of $X_{m,n}$. They are directed, consistently with
$e$. 

\begin{proposition}\label{fiveconditions}
Suppose $m\not= n$ and let $G$ be a torsion-free uniform lattice in
$\Aut(X_{m,n})$. Let $k = \gcd(m,n)$, $m' = m/k$, and
$n'=n/k$. Let $(A,\lambda)$ be a directed labeled graph such that
$X_{m,n}/G$ is homeomorphic to $Z_{(A,\lambda)}$, with 
associated length function $\ell \co V(A) \sqcup E(A) \to \N$ and
directed structure induced from $T_{m,n}$. Then
\begin{enumerate}
\item\label{j1} 
  for every $v \in V(A)$,
  \[ \sum_{e \in E^+_0(v)} \abs{\lambda(e)} \ = \ n \ \text{
    and } \ \sum_{e \in E^-_0(v)} \abs{\lambda(e)} \ = \ m \] 
\item\label{j2} 
  for every $e \in E^+(A)$,
  \begin{align*}
  \ell(\partial_0(e)) \abs{\lambda(e)} \ &= \ n' \ell(e), \\
  \ell(\partial_1(e)) \abs{\lambda(\overline{e})} \ &= \ m' \ell(e)
  \end{align*}
\item\label{j5} 
  for every $v \in V(A)$, let $k_0(v) = \gcd(\ell(v),
  n')$ and $k_1(v) = \gcd(\ell(v),m')$; then there exist partitions
  \[ E^+_0(v) \ = \ E^+_1 \sqcup \dotsm \sqcup E^+_{k_0(v)}, \quad
  E^-_0(v) \ = \ E^-_1 \sqcup \dotsm \sqcup E^-_{k_1(v)}\]
  such that the sums 
  $\sum_{e \in E^+_i} \abs{\lambda(e)}$ are equal for all $i$, 
  and the sums $\sum_{e \in E^-_j} \abs{\lambda(e)}$ are equal for all
  $j$. 
\item\label{j3} for every $v \in V(A)$,
  \[\sum_{e \in E^+_0(v)} \ell(e) \ = \ k \ell(v) \ = \ \sum_{e \in
    E^-_0(v)} \ell(e)\]
\item\label{j4} the directed graph $A$ is strongly connected. 
\end{enumerate}
\end{proposition}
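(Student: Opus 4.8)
The plan is to translate each of the five conditions into a statement about the combinatorics of $X_{m,n}$ — as described in Section~\ref{combinatorialsect} — pulled back through the covering map $X_{m,n} \to X_{m,n}/G \cong Z_{(A,\lambda)}$. Since $G$ acts freely (torsion-freeness) and cocompactly, the covering $X_{m,n} \to Z_{(A,\lambda)}$ is a genuine fiber-preserving covering of fibered $2$--complexes, and the labeled graph $(A,\lambda)$ is the quotient graph of groups, so that $\abs{\lambda(e)} = i(e)$ equals the degree of the mapping cylinder $M_e$, i.e.\ the ratio of the girth of the annulus to the length of the base circle it wraps around. I will fix a vertex $v$ of $A$, pick a branching line $L \subset X_{m,n}$ covering $C_v$, and analyze the finitely many strips incident to $L$, using that the covering restricted to $L$ is a degree-$\ell(v)$ cover of $C_v$.

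For \eqref{j1}: each edge $e \in E^+_0(v)$ corresponds to an annulus $M_e \cup M_{\overline e}$ whose preimage near $L$ consists of some number of $(m',n')$--strips above $L$; the count of these strips (over all $e$) must be $n$, since every branching line in $X_{m,n}$ has exactly $n$ strips above it. Carefully: the number of strips above $L$ mapping to $M_e \cup M_{\overline e}$ equals $\abs{\lambda(e)}$ — this is exactly the local picture of the covering at an edge, and it is the content of formula \eqref{labelindex}/\eqref{pathindex} combined with the description of covering maps in the proof of Proposition~\ref{admissibleprop}. Summing over $E^+_0(v)$ gives $n$, and symmetrically $m$ for $E^-_0(v)$. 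For \eqref{j2}: the strip above $L$ for edge $e$ is an $(m',n')$--strip, so its top boundary — a branching line covering $\partial_1(e)$, wait, I mean its upper horizontal boundary consists of $n'$ edges per cell. The annulus $M_e$ has girth $\ell(e)$ and its $C_{\partial_0(e)}$--boundary has length $\ell(\partial_0(e))$, while the $C_e$--side has length $\ell(\partial_0(e))\cdot$(something); matching the $n'$--to--$1$ ratio of horizontal edge counts across an $(m',n')$--cell against the covering degrees yields $\ell(\partial_0(e))\abs{\lambda(e)} = n'\ell(e)$, and dually for $\partial_1$. For \eqref{j3}: summing $\ell(e) = n'\ell(e)/(n'\cdot$ reorganized$)$ — more directly, $\sum_{e\in E^+_0(v)}\ell(e)$ counts, with multiplicity, how the $\ell(v)$ vertices of $C_v$'s preimage distribute the $n$ outgoing vertical edges of $L$: each vertex of $L$ has exactly $k$ outgoing vertical edges (Section~\ref{combinatorialsect}), so the total is $k\ell(v)$, and these vertical edges are partitioned by which annulus they lie over, giving $\ell(e)$ of them over $M_e$. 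For \eqref{j5}: the strips above $L$ for a fixed $e$ attach along the $k_e := \gcd(m_e,n_e)$--many cosets of $(n'/\,?)\mathbb Z$; the point is that as one runs over $E^+_0(v)$, the attaching cosets of $(n')$-periodicity inside the cyclic group $\mathbb Z/\ell(v)$ (the vertices of $C_v$ upstairs) break into $k_0(v) = \gcd(\ell(v),n')$ classes, and $G$ acts transitively on these classes (it acts transitively on the $k$ outgoing directions at each vertex of $X_{m,n}$, hence on their $C_v$--images), forcing the label-sum over each class to be equal; this is the required partition. For \eqref{j4}: strong connectivity of $A$ follows because $X_{m,n}$, hence its quotient, is connected, and because the directed structure on $T_{m,n}$ gives a directed line through every vertex — any vertex of $A$ lies on the image of such a line, and following it forward and backward exhibits directed paths in both directions between any two vertices (equivalently: $T_{m,n}$ has both a directed ray exiting and a directed ray entering each vertex, and these project to closed directed walks visiting all of $A$).

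The main obstacle I expect is \eqref{j5}: getting the partition right requires carefully identifying which cosets of the period-$n'$ subgroup of $\mathbb Z/\ell(v)\mathbb Z$ are "hit" by each edge $e$, tracking that $e$ contributes $k_e$ of the $k$ strips in each $\gcd(\ell(v),n')$--class it touches, and then invoking the $G$--symmetry among the $k$ directions at a vertex of $X_{m,n}$ to conclude the sums are equal across all $k_0(v)$ classes — rather than merely across the classes touched by a single $e$. Conditions \eqref{j1}–\eqref{j4} are essentially bookkeeping once the local covering picture from Proposition~\ref{admissibleprop} and the branching-line description of Section~\ref{combinatorialsect} are in hand, but \eqref{j5} is where the genuine homogeneity of $X_{m,n}$ — the fact that its automorphism group acts transitively on the $k$ incoming and $k$ outgoing vertical directions at each vertex — gets used, and formulating it cleanly will take the most care.
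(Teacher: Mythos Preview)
Your treatment of \eqref{j1}, \eqref{j2}, and \eqref{j3} is essentially correct and matches the paper's approach (the paper derives \eqref{j3} algebraically from \eqref{j1} and \eqref{j2} rather than by your direct count, but your count is fine).

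There is a genuine gap in your argument for \eqref{j5}. You write that ``$G$ acts transitively on these classes (it acts transitively on the $k$ outgoing directions at each vertex of $X_{m,n}$)''. This is false: $G$ is torsion-free and hence acts \emph{freely} on the vertices of $X_{m,n}$, so the $G$--stabilizer of any vertex is trivial and permutes nothing. Appealing instead to $\Aut(X_{m,n})$--transitivity (as you do in the last paragraph) does not help, since the quotient and its cell structure are determined by $G$, not by the full automorphism group. The paper's argument does not use transitivity at all; it uses the much weaker fact that every vertex of $X_{m,n}$ has the \emph{same number} (namely $k$) of outgoing vertical edges, which therefore passes to every vertex of the quotient. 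One then computes that a vertex of $C_v$ lying in the coset $i + k_0(v)\Z/\ell(v)\Z$ has exactly $(k_0(v)/n')\sum_{e\in E^+_i}\abs{\lambda(e)}$ outgoing vertical edges, and constancy of this number forces the sums to agree.

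Your sketch for \eqref{j4} is also incomplete. A directed ray in $T_{m,n}$ projects to an eventually periodic directed walk in $A$, but there is no reason this walk visits every vertex, so the assertion that such projections ``visit all of $A$'' is unjustified. The paper instead uses \eqref{j3}: replacing each directed edge $e$ by $\ell(e)$ parallel edges gives a graph in which every vertex has equal in-- and out--degree, hence the edge set decomposes into directed cycles, hence every edge of $A$ lies on a directed cycle; combined with connectedness this forces strong connectivity.
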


\begin{proof}
Conclusion \eqref{j1} follows exactly as in Proposition
\ref{latticeprop}. Conclusion \eqref{j2} is evident from the cell
structure on the annulus $M_e \cup M_{\overline{e}}$, which is tiled
by $(m',n')$--cells. Its boundary curves have lengths $n'\ell(e)$ and
$m'\ell(e)$, and they wrap $\abs{\lambda(e)}$ times and
$\abs{\lambda(\overline{e})}$ times respectively around
$C_{\partial_0(e)}$ and $C_{\partial_1(e)}$. 

For \eqref{j5}, identify the vertices of $C_v$ with the cyclic group
$\Z/\ell(v)\Z$ (in their natural cyclic ordering). The element $n' +
\ell(v)\Z$ generates the subgroup $k_0(v)\Z / \ell(v)\Z$. Given $e \in
E^+_0(v)$ the annulus $M_e \cup M_{\overline{e}}$ has $\ell(e)$
outgoing vertical edges incident to vertices of $C_v$, spaced $n'$ units
apart. They meet the vertices along a coset of $k_0(v)\Z / \ell(v)\Z$
in $\Z/\ell(v)\Z$, with $\ell(e) k_0(v)/\ell(v)$ outgoing edges
incident to each such vertex. 

For each $i = 1, \dotsc, k_0(v)$ let $E_i^+$ be the set of edges $e
\in E^+_0(v)$ such that the vertical edges of $M_e \cup
M_{\overline{e}}$ are joined to $C_v$ along the coset $i + (k_0(v)\Z /
\ell(v)\Z)$ in $\Z/\ell(v)\Z$. Now every vertex in the coset has 
\[ \sum_{e\in E^+_i} \ell(e) k_0(v) / \ell(v) \
  \overset{\eqref{j2}}{=} \ \sum_{e\in 
  E^+_i} \abs{\lambda(e)} k_0(v) / n' \ = \ (k_0(v)/n') \sum_{e\in
  E^+_i} \abs{\lambda(e)}\] 
outgoing vertical edges incident to it. In the universal cover
$X_{m,n}$, every vertex has the same number of outgoing vertical
edges; hence the same is true of $X_{m,n}/G$ and the sums $\sum_{e \in
  E^+_i} \abs{\lambda(e)}$ are the same for all $i$. The statement for
$E^-_0(v)$ is proved similarly. 

Conclusion \eqref{j3} follows from \eqref{j1} and \eqref{j2}:
\[
\sum_{e \in E^+_0(v)} \ell(e) \ \overset{\eqref{j2}}{=} \ \sum_{e \in E^+_0(v)}
\ell(v)\abs{\lambda(e)} / n' 
\ \overset{\eqref{j1}}{=} \ \ell(v) n / n' \ = \ k \ell(v)
\]
and similarly for the second sum.

For \eqref{j4}, define a new directed graph $A'$ from $A$ by
replacing each directed edge $e$ with $\ell(e)$ directed edges. By
\eqref{j3}, each vertex of $A'$ has equal numbers of incoming and
outgoing edges. It follows that $E^+(A')$ admits a partition into
directed cycles. Hence every directed edge is part of a directed
cycle. The same is then true of $A$. Now consider the decomposition of
$A$ into strongly connected components. If this decomposition is
non-trivial then there is a directed edge between two such components
that cannot be part of a directed cycle, which is a
contradiction. Hence $A$ is strongly connected. 
\end{proof}

\begin{theorem}\label{gbslatticechar}
Suppose $m\not= n$ and let $G$ be a torsion-free group. Then $G$ is
isomorphic to a uniform lattice in $\Aut(X_{m,n})$ if and only if
there exist a compact GBS structure $(A,\lambda)$ for $G$, a directed
graph structure $E(A) = E^+(A) \sqcup E^-(A)$, and a function $\ell
\co V(A) \cup E(A) \to \N$ satisfying $\ell(e) = \ell(\overline{e})$
for all $e \in E(A)$ such that conclusions \eqref{j1}, \eqref{j2}, and
\eqref{j5} of Proposition \ref{fiveconditions} hold. 
\end{theorem}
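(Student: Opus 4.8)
The plan is to prove the two implications in turn; the forward direction is a direct consequence of Proposition~\ref{fiveconditions}, while the converse requires constructing an explicit complex and recognizing it as $X_{m,n}$. Suppose first that $G$ is torsion-free and isomorphic to a uniform lattice in $\Aut(X_{m,n})$. Being discrete and torsion-free, $G$ has only trivial cell stabilizers, hence acts freely on $X_{m,n}$; being a uniform lattice it acts cocompactly (Proposition~\ref{blprop}). Since $X_{m,n}$ is simply connected, $X_{m,n}/G$ is a compact $2$-complex with fundamental group $G$, and since $m\not=n$ the discussion preceding Proposition~\ref{fiveconditions} shows it is homeomorphic to a fibered $2$-complex $Z_{(A,\lambda)}$ with $A$ a directed graph; the inherited cell structure determines a length function $\ell\co V(A)\cup E(A)\to\N$ with $\ell(e)=\ell(\overline e)$. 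Thus $(A,\lambda)$ is a compact GBS structure for $G$, and Proposition~\ref{fiveconditions} supplies all five conclusions, in particular \eqref{j1}, \eqref{j2}, \eqref{j5}.

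For the converse, suppose we are given a compact GBS structure $(A,\lambda)$ for $G$, a directed structure $E(A)=E^+(A)\sqcup E^-(A)$, and $\ell$ satisfying \eqref{j1}, \eqref{j2}, \eqref{j5}; put $k=\gcd(m,n)$, $m'=m/k$, $n'=n/k$. The first step is to build $Z_{(A,\lambda)}$ with the cell structure prescribed by $\ell$: give each $C_v$ the cell structure with $\ell(v)$ edges, and tile each annulus $M_e\cup M_{\overline e}$ by $\ell(e)$ cells of type $(m',n')$. Condition \eqref{j2} is exactly what makes this consistent, since the $\partial_0(e)$- and $\partial_1(e)$-boundaries of the tiled annulus then have lengths $n'\ell(e)=\ell(\partial_0(e))\abs{\lambda(e)}$ and $m'\ell(e)=\ell(\partial_1(e))\abs{\lambda(\overline e)}$, wrapping $\abs{\lambda(e)}$ and $\abs{\lambda(\overline e)}$ times around $C_{\partial_0(e)}$ and $C_{\partial_1(e)}$. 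This is the usual $Z_{(A,\lambda)}$ with a finer cell structure, so its fundamental group is $G$; let $X=X_{(A,\lambda)}$ be its universal cover with the lifted cell structure, on which $G$ acts freely, cellularly, and cocompactly.

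The main step is to identify $X$ with $X_{m,n}$ through the combinatorial description of Section~\ref{combinatorialsect}. Condition \eqref{j1} gives $T_{(A,\lambda)}\cong T_{m,n}$ as directed trees, exactly as in the proof of Proposition~\ref{latticeprop}, and every strip of $X$ is an $(m',n')$-strip by construction; it remains only to arrange that on every branching line the $n$ strips above are joined along the cosets of $n'\Z$ with $k$ to each coset, and likewise the $m$ strips below along cosets of $m'\Z$. When gluing an annulus $M_e\cup M_{\overline e}$ onto its two end circles I would use the freedom to rotate each gluing independently, which places the vertical edges of that annulus on any prescribed coset of $\langle n'\rangle$ in $\Z/\ell(\partial_0(e))\Z$ at one end and of $\langle m'\rangle$ in $\Z/\ell(\partial_1(e))\Z$ at the other. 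Using the partitions supplied by \eqref{j5}, glue the annuli of $E^+_i$ along the $i$-th coset at each vertex, and dually for the sets $E^-_j$. Then, as in the proof of conclusion \eqref{j5}, a vertex of $C_v$ in the $i$-th coset acquires $(k_0(v)/n')\sum_{e\in E^+_i}\abs{\lambda(e)}$ outgoing vertical edges; by \eqref{j5} the sums $\sum_{e\in E^+_i}\abs{\lambda(e)}$ are all equal, so by \eqref{j1} each equals $n/k_0(v)$ and the count is $n/n'=k$ at every vertex of $C_v$ (and symmetrically $k$ incoming vertical edges at every vertex). Since $X\to Z_{(A,\lambda)}$ is a cellular covering, the same counts hold at every vertex of every branching line of $X$; and as each strip contributes one vertical edge to each vertex of the coset it meets, the $n$ strips above (resp.\ $m$ strips below) are then forced to distribute $k$ to each coset. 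Hence $X\cong X_{m,n}$ as CW complexes, so $G$ acts freely and cocompactly on $X_{m,n}$ and is a uniform lattice in $\Aut(X_{m,n})$ by Proposition~\ref{blprop}.

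I expect the main obstacle to be this last identification: one must pass carefully between \eqref{j5}, which is phrased in terms of the finite circle $C_v\cong\Z/\ell(v)\Z$ and the cyclic subgroup generated by $n'$, and the equidistribution of strips along cosets of $n'\Z$ in the infinite branching lines of $X$, and must check that the gluing data really can be chosen to realize the prescribed partition simultaneously at both ends of every annulus. The remaining ingredients — the consistency of the $\ell$-cell structure and the elementary arithmetic with \eqref{j1} and \eqref{j2} (including that $\ell(e)k_0(v)/\ell(v)$ is an integer) — should be routine.
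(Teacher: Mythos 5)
Your proposal is correct and follows essentially the same route as the paper: the forward direction is quoted from Proposition~\ref{fiveconditions}, and the converse builds the cell structure on $Z_{(A,\lambda)}$ from $\ell$, attaches the annuli along cosets prescribed by the partitions in \eqref{j5}, and matches the universal cover against the combinatorial description of $X_{m,n}$ from Section~\ref{combinatorialsect} using \eqref{j1} and \eqref{j2}. The point you flag as a possible obstacle (independent rotation of the two attaching maps of each annulus) is handled in the paper exactly as you propose, so no gap remains.
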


\begin{proof}
The ``only if'' direction holds by Proposition
\ref{fiveconditions}. For the converse let $(A,\lambda)$, the directed
graph structure, and $\ell$ be given. Let $k = \gcd(m,n)$, $m' = m/k$,
and $n' = n/k$. We put a cell stucture on $Z_{(A,\lambda)}$ as
follows. For each $v\in V(A)$ let $C_v$ have $\ell(v)$ vertices and
$\ell(v)$ edges. Choose an identification of the cyclically ordered
vertices with $\Z/\ell(v)\Z$. For each $e \in E^+(A)$ let $A_e$ be an
annulus tiled by $\ell(e)$ $(m',n')$-cells, so that the initial end is
a circle of length $n'\ell(e)$ and the opposite end has length 
$m'\ell(e)$. The $\ell(e)$ edges joining the two ends are called
vertical edges, and are directed from the initial end to the other
end. 

For each $v$ and $e \in E^+_0(v)$ there are $k_0(v)$ ways,
combinatorially, to attach the initial end of $A_e$ to $C_v$ by a
degree $\lambda(e)$ map, corresponding to the coset of
$k_0(v)\Z/\ell(v)\Z$ in $\Z/\ell(v)\Z$ met by the vertical edges of
$A_e$. (The existence of these combinatorial attaching maps follows
from \eqref{j2}.) Using the partition $E^+_0(v) = E^+_1 \sqcup \dotsm
\sqcup E^+_{k_0(v)}$, attach each annulus $A_e$ along the coset $i +
(k_0(v)\Z/\ell(v)\Z)$ such that $e \in E^+_i$. Attach the other ends of
each annulus similarly along the cosets of $k_1(v)\Z/\ell(v)\Z$. The
resulting space is homeomorphic to $Z_{(A,\lambda)}$. 

By \eqref{j1} we have $T_{(A,\lambda)} \cong T_{m,n}$ as directed
graphs. By construction, each strip in the universal cover of
$Z_{(A,\lambda)}$ is an $(m',n')$--strip. Finally, by \eqref{j5}, each
vertex of $Z_{(A,\lambda)}$ has equal numbers of outgoing (respectively,
incoming) vertical edges. It follows that the strips in the universal
cover are joined to the branching lines in the manner described in
Section \ref{combinatorialsect}. Hence the universal cover of
$Z_{(A,\lambda)}$ is isomorphic to $X_{m,n}$ as a CW complex, and $G$
acts freely and cocompactly on $X_{m,n}$. 
\end{proof}

\subsection{Discreteness}
One clear situation in which $\Aut(X_{m,n})$ cannot contain
incommensurable lattices is when it is discrete. When this occurs,
every lattice has finite index in $\Aut(X_{m,n})$, by
\cite[1.7]{basslubotzky}. 

\begin{theorem}\label{discrete} 
For any $m,n\geq 1$ the group $\Aut(X_{m,n})$ is discrete if and only
if\/ $\gcd(m,n) = 1$. When this occurs, there is a short exact
sequence 
\[ 1 \to BS(m,n) \to \Aut(X_{m,n}) \overset{\chi}{\to} \{\pm 1\} \to 1.\]
\end{theorem}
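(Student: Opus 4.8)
The plan is to prove both directions by analyzing what automorphisms can do near a single branching line, where the combinatorics of strip attachment is governed by $k = \gcd(m,n)$. First I would dispose of the easy implication: if $\gcd(m,n) \neq 1$, then $k \geq 2$ and every vertex of a branching line has $k \geq 2$ outgoing vertical edges joined along the same coset of $(n/k)\Z$; as in the proof of Remark \ref{notembedding}, one can choose a finite subtree $S \subset T_{m,n}$, perform a nontrivial permutation of the strips of $\pi^{-1}(S)$ lying above a fixed edge (there are $k \geq 2$ of them attached along each coset, so they can be permuted), project trivially to $S$, and extend to a nontrivial element $g \in \Aut(X_{m,n})$ fixing a large ball. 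This shows no cell stabilizer is trivial, hence $\Aut(X_{m,n})$ is not discrete. (Alternatively, this follows from the failure of $\pi_*$ to be an embedding together with the kernel-shift construction already given in Remark \ref{notembedding}.)

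For the converse, assume $\gcd(m,n) = 1$, so each vertex of each branching line has exactly one outgoing and one incoming vertical edge. I would first show $\Aut(X_{m,n})$ is discrete by proving a cell stabilizer is trivial. Fix a $2$--cell $\sigma$ and let $g$ fix $\sigma$. Since $k=1$, a strip is glued to a branching line along all of $\Z$, so fixing one $2$--cell forces $g$ to fix the entire strip containing $\sigma$, hence the two branching lines on its boundary (pointwise, since there is no room to translate or reflect once a cell is fixed and $k=1$ rigidifies the gluing). By induction along strips and branching lines — using that each branching line has a unique strip above it through each vertex and a unique one below — $g$ fixes all of $X_{m,n}$, so $g = \id$. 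Thus $\Aut(X_{m,n})$ is discrete.

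It remains to identify $\Aut(X_{m,n})$ with an extension of $\{\pm 1\}$ by $BS(m,n)$. The orientation character $\chi \co \Aut(X_{m,n}) \to \{\pm 1\}$ of Section \ref{complexes-sec} (extended from the GBS orientation character) is defined since $X_{m,n} \not\cong X_{1,1}$ when $m \neq n$; when $m=n=1$ the statement is immediate ($BS(1,1) = \Z^2$, $\Aut(X_{1,1})$ has the square-tiled plane, and $\gcd \neq 1$ fails so this case is vacuous for the ``if'' direction — I'd note $m=n$ with $\gcd = 1$ forces $m=n=1$, and handle it or exclude it as the statement allows $m \ge 1$). For $m \neq n$: $BS(m,n)$ acts freely cocompactly on $X_{m,n}$, and $\ker(\chi)$ consists of orientation-preserving automorphisms. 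Since $\Aut(X_{m,n})$ is discrete and acts cocompactly (it contains the cocompact lattice $BS(m,n)$), it is itself a cocompact lattice, hence by Theorem \ref{gbslatticechar} (or directly) a torsion-free GBS group — but actually I should check torsion-freeness: an orientation-preserving torsion element would have to fix a point, contradicting discreteness-plus-properness only if stabilizers are trivial, which we just showed; an orientation-reversing involution could exist, which is exactly why $\chi$ is (potentially) nontrivial. So $\ker(\chi)$ is a torsion-free cocompact lattice acting on $X_{m,n}$ with the same quotient graph shape as $BS(m,n)$; comparing covolumes (both have covolume equal to the number of cells of $Z_{m,n}$ since all stabilizers are trivial) and using that $\ker(\chi) \supseteq BS(m,n)$ forces $\ker(\chi) = BS(m,n)$. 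Finally, surjectivity of $\chi$: exhibit one orientation-reversing automorphism of $X_{m,n}$, e.g. the lift of the automorphism of $Z_{m,n}$ induced by $a \mapsto a^{-1}$, $t \mapsto t$ (which reverses all horizontal orientations hence all branching-line orientations); this realizes $-1$.

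The main obstacle I expect is the middle step — rigorously extracting triviality of cell stabilizers from $\gcd(m,n)=1$, i.e., making precise that when $k=1$ there is genuinely ``no room'' for a nontrivial automorphism fixing a cell. This needs a careful propagation argument across strips and branching lines, using the uniqueness of the strip above/below each vertex; once that is in hand, identifying $\ker(\chi)$ with $BS(m,n)$ is a covolume count and the surjectivity of $\chi$ is an explicit construction.
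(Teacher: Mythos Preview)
Your approach matches the paper's: both directions hinge on the observation that when $k=\gcd(m,n)=1$ each coset carries a unique strip (forcing rigidity), while $k>1$ allows strip permutations and hence infinite stabilizers. There is, however, a small but real gap in your discreteness argument. You claim the stabilizer of a $2$--cell $\sigma$ is trivial, but an orientation-reversing automorphism can fix $\sigma$ (reflecting it left--right about its center); your propagation step ``fixing one $2$--cell forces $g$ to fix the entire strip'' then fails, since such a $g$ reflects the strip rather than fixing it cellwise. The fix is precisely what the paper does: restrict first to $K=\ker(\chi)$ and show $K$ acts freely on \emph{vertices}. Your propagation argument is valid for orientation-preserving $g$, and discreteness follows since $[\Aut(X_{m,n}):K]\le 2$.

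For the identification $K=BS(m,n)$ the paper's argument is shorter than your covolume comparison: since $BS(m,n)\subset K$ already acts \emph{transitively} on vertices and $K$ acts \emph{freely} on vertices, necessarily $K=BS(m,n)$. On the other hand, you explicitly verify surjectivity of $\chi$ (via the lift of $a\mapsto a^{-1}$, $t\mapsto t$), a point the paper leaves implicit.
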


\begin{proof}
Suppose $\gcd(m,n) = 1$ and let $K$ be the kernel of $\chi \co
\Aut(X_{m,n}) \to \{\pm 1\}$. We claim that $K$ acts freely on
the vertices of $X_{m,n}$. If $g\in K$ fixes a vertex $x \in X_{m,n}$
then it also fixes pointwise the branching line $L$ containing
$x$. Now note from Section \ref{combinatorialsect} that the strips
above this branching line are all attached along different cosets $i +
n\Z$, and the same is true of the strips below $L$. Hence $g$ cannot
permute these strips and it acts trivially on them. By similar 
reasoning, $g$ acts trivially on the radius $r$ neighborhood of $L$,
for every $r$, so $g = 1$. 

The lattice $BS(m,n)$ is a subgroup of $K$ acting transitively on
the vertices of $X_{m,n}$. Since $K$ acts freely on these vertices, we
must have $K = BS(m,n)$. 

Now suppose that $\gcd(m,n) = k \not= 1$. Let $K = \ker(\chi)$ and
consider a vertex stabilizer $K_x$. Choose any directed ray
$\alpha$ in $T_{m,n}$ with initial vertex $\pi(x)$. Let $L_x$ be the
branching line through $x$ and for any vertex $v \in \alpha$ let $L_v$
be the branching line $\pi^{-1}(v)$. There are $k$ strips attached
above $L_v$ along each coset $i + (n/k)\Z$. In particular there are
elements $g \in K_x$ which fix $L_x$ and $L_v$ pointwise, but permute
the strips above $L_v$ nontrivially. Since $v$ is arbitrarily far
from $x$, $K_x$ is infinite and $K$ is non-discrete. (With a little
more care one may express $K_x$ explicitly as an inverse limit of
products of permutation groups.) 
\end{proof}

\section{A commensurability invariant}\label{invariantsec}

Consider a non-elementary GBS group $G$ and a non-trivial elliptic
subgroup $V < G$. Recall that because $G$ is non-elementary, the
elliptic subgroups of $G$ are well defined, as is the modular
homomorphism $q \co G \to \Q^{\times}$. 

We define the \emph{$V$\!-depth} of an element $g\in G$ to be $D_V(g)
= [V: V \cap V^g]$ . Next we define the \emph{depth profile}:
\[\mathcal{D}(G,V) \ = \ \{ D_V(g) \mid g \text{ is hyperbolic
    and } q(g) = \pm 1\} \ \subset \ \N. \]
The depth profile is a commensurability invariant in the following
sense.
\begin{theorem}\label{invariant}
Let $G$ be a non-elementary GBS group and $V<G$ a non-trivial elliptic
subgroup.
\begin{enumerate}
\item\label{t1} If\/ $G'<G$  is a subgroup of finite index
  and $V \subset G'$ then \[\mathcal{D}(G,V) \ = \ \mathcal{D}(G',V).\]
\item\label{t2} If\/ $V' < V$ with $[V:V'] = r$ then
  \[\mathcal{D}(G,V') \ = \ \{ \, n/\gcd(r,n) \mid n \in
  \mathcal{D}(G,V)\, \}.\] 
\end{enumerate}
\end{theorem}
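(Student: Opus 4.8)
The plan is to reduce everything to an explicit formula for $D_V$ along segments. Fix a GBS tree $X$ for $G$, a vertex $x$ fixed by $V$, and write $W = G_x = \langle t\rangle$ and $V = \langle t^a\rangle$ with $a = [W:V]$. I claim that for every $g$ with $q(g) = \pm 1$,
\[ D_V(g) \ = \ i([x,gx])\,\big/\,\gcd\!\bigl(a,\,i([x,gx])\bigr)\,, \]
and in particular $D_V(g) = D_W(g)\,/\gcd(a,D_W(g))$, since $i([x,gx]) = [W:W\cap W^g] = D_W(g)$. To see this, note $V^g = gVg^{-1} = \langle s^a\rangle$ where $s = gtg^{-1}$ generates $G_{gx}$; the segment stabilizer $G_{[x,gx]} = W\cap G_{gx}$ is generated both by $t^{\,i}$ and by $s^{\,j}$ with $i = i([x,gx])$ and $j = i([gx,x])$, and $q(g) = \pm1$ forces $i = j$ by \eqref{modulus}, so $t^{\,i} = s^{\pm i}$; hence $V\cap V^g = \langle t^a\rangle \cap \langle s^a\rangle = \langle t^{\lcm(a,i)}\rangle$, which has index $\lcm(a,i)/a = i/\gcd(a,i)$ in $V$. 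For \eqref{t2}, write $V' = \langle t^{ar}\rangle$ and apply the formula to both $V$ and $V'$: for every $g$ with $q(g) = \pm1$ one gets $D_V(g) = D_W(g)/\gcd(a,D_W(g))$ and $D_{V'}(g) = D_W(g)/\gcd(ar,D_W(g))$, and a routine prime-by-prime comparison turns this into $D_{V'}(g) = D_V(g)/\gcd\!\bigl(r,D_V(g)\bigr)$. Since ``$q(g)=\pm1$'' refers only to the (well-defined) modular homomorphism of $G$ and not to the chosen elliptic subgroup, letting $g$ range over $\{g\in G : q(g)=\pm1\}$ gives $\mathcal{D}(G,V') = \{\,n/\gcd(r,n) \mid n\in\mathcal{D}(G,V)\,\}$, which is \eqref{t2}.

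For \eqref{t1}, observe that $X$ is also a GBS tree for $G'$ (again non-elementary, being a finite-index subgroup of a non-elementary GBS group), with the same elliptic subgroups and modular homomorphism, so $\mathcal{D}(G',V)\subseteq\mathcal{D}(G,V)$ is immediate. For the reverse inclusion, the displayed formula (with $a = [G_x:V]$) applies verbatim to any $g'\in G'\subseteq G$ with $q(g')=\pm1$, and $q(g')=\pm1$ is equivalent to $i([x,g'x]) = i([g'x,x])$; so it suffices to show that every $g\in G$ with $q(g)=\pm1$ has $i([x,gx])$ realized as $i([x,g'x])$ by some $g'\in G'$ with $q(g')=\pm1$. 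Suppose first that $g$ is hyperbolic, and take $g' = g^m$ with $g^m\in G'$ (possible as $[G:G']<\infty$), so that $q(g^m) = q(g)^m = \pm1$. If $z$ is the point of the axis of $g$ (equivalently of $g^m$) nearest $x$, then $[x,g^mx] = [x,z]\cup[z,g^mz]\cup[g^mz,g^mx]$, so its sequence of indices is obtained from that of $[x,gx]$ by replacing the single-period block $B$ (the indices along $[z,gz]$) by $B^m$. Because $q(g)=\pm1$, the block $B$ is balanced, $i(B) = i(\overline B)$, and feeding the repeated copies of $B$ into Lemma~\ref{segmentindex} as consecutive pieces, the divisibility conditions for $B^m$ collapse to those for a single copy of $B$; hence $i([x,g^mx]) = i([x,gx])$, as desired.

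The elliptic case is the crux. Here $q(g)=1$ automatically, $g$ fixes a vertex $z$, and $[x,gx] = [x,z]\cup g[z,x]$ with $g\notin G_e$ for the last edge $e$ of $[x,z]$ (a geodesic is reduced). Now a power $g^m\in G'$ may fix $e$ (and more), so that $[x,g^mx]$ collapses and $i([x,g^mx]) < i([x,gx])$; powers alone no longer suffice. The fix is to correct the collapse: given $g^m\in G'$ with $g^m\in G_e$, choose a rotation $\rho\in G'_z = G'\cap G_z$ with $\rho\notin G_e$ and set $g' = g^m\rho\in G'$; this is still elliptic (so $q(g')=1$) and $g'\notin G_e$, so $[x,g'x] = [x,z]\cup g'[z,x]$ does not collapse and $i([x,g'x]) = i([x,gx])$. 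This succeeds whenever $G'_z\not\subseteq G_e$. The residual possibility $G'_z\subseteq G_e$ is dealt with by realizing the value $i([x,gx])$ instead by an element fixing a different vertex (there is one, since $i([x,gx])>1$ forces some index along $[x,z]$ to exceed $1$, hence gives that path more than one lift). Checking that some realization of a given depth value always survives the passage to $G'$ — this elliptic bookkeeping — is the step I expect to demand the most care; the arithmetic of \eqref{t2} and the hyperbolic case of \eqref{t1} are routine given the index formula above and Lemma~\ref{segmentindex}.
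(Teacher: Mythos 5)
Your route is in outline the paper's: you first establish $D_V(g)=i([x,gx])/\gcd\bigl(a,i([x,gx])\bigr)$ for $q(g)=\pm 1$ (this is Lemma~\ref{invariantlemma}\eqref{l2} in the form $V<G_x$, and your derivation is correct), deduce \eqref{t2} from it (your prime-by-prime identity is just $\divideby_r\circ\divideby_a=\divideby_{ar}$, so this part is fine and equivalent to the paper's), and prove \eqref{t1} by realizing each depth by an element of $G'$ with the same segment index, via Lemma~\ref{segmentindex}; your hyperbolic case is essentially the paper's argument, and your correction $g'=g^m\rho$ with $\rho\in (G'\cap G_z)\setminus G_e$ is correct when such a $\rho$ exists. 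The genuine gap is exactly where you flag it: the residual elliptic case $G'\cap G_z\subseteq G_e$ is not proved. The parenthetical ``there is one, since $i([x,gx])>1$ forces some index along $[x,z]$ to exceed $1$, hence gives that path more than one lift'' is not an argument. What you actually need, by your own formula and \eqref{modulus}, is a vertex $y'$ in the \emph{$G'$-orbit of $x$} with $i([x,y'])=i([y',x])$ and with $i([x,y'])/\gcd\bigl(a,i([x,y'])\bigr)=D_V(g)$; another copy of the path $[x,z]$ elsewhere in the tree gives no such $y'$, and in the residual case every element of $G'$ fixing $z$ also fixes $e$, so its depth is governed by a proper initial segment of $[x,z]$ and may be strictly smaller than $D_V(g)$. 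Nothing in the sketch bridges this.

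You also cannot quietly fall back on the paper for this step, because your worry is well-founded there too: the paper's proof of Lemma~\ref{invariantlemma}\eqref{l1} disposes of the elliptic case with the identity $[x,g^kx]=\tau\cdot g^k\overline{\tau}$, which requires $g^k\notin G_e$ and fails exactly in the situation you describe. In fact the conclusion $D_V(g)=D_V(g^k)$ itself can fail for elliptic $g$: in the non-elementary GBS group $G=BS(2,2)=\langle a,t\mid ta^2t^{-1}=a^2\rangle$ take $V=\langle a\rangle$ and $g=tat^{-1}$, which is elliptic with $q(g)=1$; then $V\cap V^g=\langle a^2\rangle$ (Britton's lemma), so $D_V(g)=2$, while $g^2=ta^2t^{-1}=a^2\in V$ gives $D_V(g^2)=1$. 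So the power trick alone cannot close the elliptic case of \eqref{t1}, and the ``elliptic bookkeeping'' you defer is not routine tidying-up: it is the one step of \eqref{t1} that neither your proposal nor the printed argument establishes, and completing it --- for instance by actually producing, for each depth realized by an elliptic element of $G$, a $G'$-translate of $x$ at an appropriate unimodular index from $x$ --- is where the real work remains.
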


Once $\mathcal{D}(G,V)$ is known, using \eqref{t2} one can compute 
depth profiles for every finite index subgroup of $G$, and
hence for every GBS group commensurable with $G$, by \eqref{t1}. 
Alternatively, one may define an equivalence relation on the set of
subsets of $\N$, by declaring $S \subset \N$ equivalent to $\{
\, n/\gcd(r,n) \mid n \in S \, \}$ for each $r \in \N$ and taking the
symmetric and transitive closure. Then the equivalence class of
$\mathcal{D}(G,V)$ is a true commensurability invariant of $G$. 

Given $S \subset \N$ and $r\in \N$ let $S/r$ denote the set $\{
\, n/\gcd(r,n) \mid n \in S \, \} \subset \N$.  

\begin{proposition}
Two subsets $S, S' \subset \N$ are equivalent if and only if there
exist $r, r' \in \N$ such that $S/r = S'/r'$. 
\end{proposition}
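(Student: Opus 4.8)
The plan is to show that the relation $R$ on subsets of $\N$ defined by ``$S \mathrel{R} S'$ iff $S/r = S'/r'$ for some $r, r' \in \N$'' \emph{is} the equivalence relation of the statement. Since $\gcd(1,n)=1$ for all $n$, the operation $S \mapsto S/1$ is the identity, so the relation being generated is just $S \sim S/r$, and the equivalence relation in the proposition is by definition the smallest equivalence relation containing every pair $(S, S/r)$. Thus I would verify three things: that $R$ is an equivalence relation; that $R$ contains each pair $(S, S/r)$; and that $R$ is contained in any equivalence relation that does. The latter two are immediate: $S/r = (S/r)/1$ shows $(S, S/r)\in R$, and if $S/r = S'/r'$ then in any such equivalence relation $S \sim S/r = S'/r' \sim S'$. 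So the entire content is the transitivity of $R$.

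The one computational input I would isolate first is the composition identity
\[ (S/r)/s \ = \ S/(rs) \qquad \text{for all } r,s \in \N \text{ and } S \subset \N. \]
Because $S/r$ is formed termwise --- each $n \in S$ contributing exactly the element $n/\gcd(r,n)$ --- it suffices to check, for each fixed $n \in \N$, the numerical identity $\bigl(n/\gcd(r,n)\bigr)\big/\gcd\!\bigl(s,\, n/\gcd(r,n)\bigr) \;=\; n/\gcd(rs,n)$, and this in turn reduces to a one-line check at each prime $p$: writing $v_p$ for $p$-adic valuation, both sides acquire the valuation $\max\bigl(v_p(n) - v_p(r) - v_p(s),\, 0\bigr)$. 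I expect this to be the only step where a reader pauses, and it is entirely routine.

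Granting the identity, transitivity is formal. Given $S/r = S'/r'$ and $S'/s' = S''/s''$, apply the identity twice:
\[ S/(rs') \ = \ (S/r)/s' \ = \ (S'/r')/s' \ = \ S'/(r's'), \qquad S''/(s''r') \ = \ (S''/s'')/r' \ = \ (S'/s')/r' \ = \ S'/(s'r'), \]
and since $r's' = s'r'$ the two right-hand sides agree, so $S/(rs') = S''/(s''r')$, i.e.\ $S \mathrel{R} S''$. Combined with the evident reflexivity ($S/1 = S/1$) and symmetry of $R$, this makes $R$ an equivalence relation, and by the first paragraph $R$ then coincides with the equivalence relation of the statement, which is the assertion. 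The only real obstacle is the composition identity $(S/r)/s = S/(rs)$; every other step is a soft consequence.
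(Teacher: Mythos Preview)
Your proof is correct and follows essentially the same approach as the paper: both reduce the statement to showing that the relation $S \approx S'$ (defined by $S/r = S'/r'$ for some $r,r'$) is transitive, and both establish transitivity via the composition identity $(S/r)/s = S/(rs)$, with the same chain $S/(rs') = S'/(r's') = S''/(s''r')$. You are slightly more explicit about the logical reduction and about verifying the composition identity (via $p$-adic valuations), whereas the paper simply asserts that one easily verifies $\divideby_s \circ \divideby_r = \divideby_{rs}$.
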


\begin{proof}
Let $\divideby_r \co \N \to \N$ denote the function $n \mapsto
n/\gcd(r,n)$. One easily verifies that $\divideby_s \circ
\divideby_r = \divideby_{rs}$, and therefore $(S/r)/s$ = $S/(rs)$
for all $S \subset \N$ and $r,s \in \N$. 

Let $\sim$ denote the smallest equivalence relation on
$\mathcal{P}(\N)$ such that $S \sim S/r$ for all $r\in \N$. Define $S
\approx S'$ if there exist $r, r'\in \N$ such that $S/r =
S'/r'$. Clearly, $S \approx S'$ implies $S \sim S'$. For the converse
it suffices to show that $\approx$ is an equivalence relation,
i.e. that it is transitive. If $S \approx S'$ and $S' \approx S''$
then there exist $r, r', s', s'' \in \N$ such that $S/r = S'/r'$ and
$S'/s' = S''/s''$. Then 
\[S/(rs') = (S/r)/s' = (S'/r')/s' = S' / (r's') = (S' /
s')/r' = (S'' / s'')/r' = S'' / (s'' r'),\] 
so indeed $S \approx S''$. 
\end{proof}

\begin{example}\label{bsexample}
Let $G = BS(k, kn)$ with $k,{n} > 1$. There is an index $k$ normal
subgroup $G' < G$ isomorphic to $\bigvee_{i=1}^k BS(1,n)$, with vertex
group $V$. Since every edge in this decomposition of $G'$ has indices
$1$ and $n$, every element of modulus $\pm 1$ has $V$-depth a power
of ${n}$. All powers are realized, and 
\[ \mathcal{D}(G,V) \ = \ \mathcal{D}(G',V) \ = \ \{ \, {n}^i \mid
  i \in \N \cup \{0\}\,\}.\]  
See Proposition \ref{wedgecomputation} ahead for a more detailed proof
(since by slide moves we can write $G' \cong BS(1,n) \vee
\bigvee_{i=1}^{k-1} BS(1,1)$). It is important that $k > 1$; otherwise 
there are no hyperbolic elements of modulus $\pm 1$ and the depth
profile of $G'$ is empty. 

Changing the elliptic subgroup to $V' < V$ with $[V,V']=r$, Theorem
\ref{invariant}\eqref{t2} gives
\[ \mathcal{D}(G, V') \ = \ \mathcal{D}(G,V)/r\ = \ \{ \,
   {n}^i/\gcd(r, {n}^i) \mid i 
  \in \N \cup \{0\}\,\}.\]
This set (for any $r$) has the property that, with finitely many
exceptions, any two successive elements have ratio ${n}$. Hence the
modulus ${n}$ is an invariant of the equivalence class of depth
profiles of $BS(k,kn)$. This yields a new proof of the most difficult
case of \cite[Theorem 1.1]{CRKZ}: 
\end{example}

\begin{corollary}[Lemma 7.2 of \cite{CRKZ}]\label{crkz} 
The groups $BS(k_1, k_1 n_1)$ and $BS(k_2, k_2n_2)$ with $k_i,
{n_i}>1$ are commensurable only if\/ ${n_1} = {n_2}$. 
\end{corollary}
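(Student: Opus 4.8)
The plan is to read off the corollary from the depth profile computation of Example~\ref{bsexample} together with the invariance statements of Theorem~\ref{invariant}, the point being that the modulus $n$ can be recovered from the depth profile of $BS(k,kn)$, and the depth profile is essentially a commensurability invariant. Suppose then that $G_1 = BS(k_1,k_1n_1)$ and $G_2 = BS(k_2,k_2n_2)$ with $k_i,n_i>1$ are abstractly commensurable, so there are finite index subgroups $H_i < G_i$ and an isomorphism $\phi\co H_1 \to H_2$; via $\phi$ we regard one group $H$ as a common finite index subgroup of both $G_1$ and $G_2$. Since $BS(k_i,k_in_i)$ with $k_i,n_i>1$ is not virtually abelian (it has a finite index subgroup mapping onto a nonabelian free group, cf.\ Example~\ref{bsexample}), the group $H$ is a non-elementary GBS group, so that its elliptic subgroups and modular homomorphism are intrinsically defined and all the depth profiles below make sense.

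Next I would pin down a single elliptic subgroup of $H$ that is simultaneously visible inside $G_1$ and inside $G_2$. Let $V_i < G_i$ be the vertex group of the standard GBS structure on $BS(k_i,k_in_i)$; it is infinite cyclic and elliptic, and $V_i \cap H$ is a nontrivial elliptic subgroup of $H$ (it fixes a vertex of the Bass--Serre tree $X_i$, which restricts to a GBS tree for $H$) of finite index in $V_i$. Because $H$ is non-elementary, all its elliptic elements are commensurable, so $W := (V_1 \cap H) \cap (V_2 \cap H)$ is again a nontrivial elliptic subgroup of $H$, and it has finite index in both $V_1$ and $V_2$. Applying Theorem~\ref{invariant}\eqref{t1} (with $W \subset H \subset G_i$) and then Theorem~\ref{invariant}\eqref{t2} (with $W < V_i$), we obtain
\[ \mathcal{D}(G_1,V_1)\big/[V_1:W] \ = \ \mathcal{D}(H,W) \ = \ \mathcal{D}(G_2,V_2)\big/[V_2:W]. \]

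Finally I would invoke Example~\ref{bsexample}: for each $i$ we have $\mathcal{D}(G_i,V_i) = \{\, n_i^{\,\ell} \mid \ell \in \N \cup \{0\}\,\}$, and for every $r \in \N$ the set $\mathcal{D}(G_i,V_i)/r$ has the property that, with finitely many exceptions, successive elements (listed in increasing order) have ratio $n_i$. Hence $n_i$ is recovered from $\mathcal{D}(G_i,V_i)/r$ as the eventual ratio of consecutive elements, a limit which exists and is $>1$ since $n_i > 1$. The displayed equality forces $\mathcal{D}(G_1,V_1)/[V_1:W]$ and $\mathcal{D}(G_2,V_2)/[V_2:W]$ to coincide as subsets of $\N$, so their eventual consecutive ratios agree, i.e.\ $n_1 = n_2$.

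The only step needing genuine care is the middle one: promoting Theorem~\ref{invariant} from internal to \emph{abstract} commensurability. This amounts to choosing the elliptic subgroup $W$ so that it lies inside the common finite index subgroup $H$ and inside both $V_1$ and $V_2$, which uses that $H$ is non-elementary and that elliptic subgroups are intrinsic and mutually commensurable. The reduction $\mathcal{D}(G_i,V_i) = \{n_i^{\,\ell}\}$ and the recovery of $n_i$ are already carried out in Example~\ref{bsexample}, so beyond this bookkeeping I do not anticipate a substantial obstacle.
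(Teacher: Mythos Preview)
Your argument follows the paper's route exactly---compute the depth profile as in Example~\ref{bsexample}, use Theorem~\ref{invariant} to see that its equivalence class is a commensurability invariant, and extract $n$ as the eventual ratio of successive elements---and your treatment of the abstract-commensurability step (producing a common elliptic $W$ inside $H$) is a useful explicit version of what the paper leaves implicit.

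There is one slip to correct. With $V_i$ equal to the vertex group $\langle a\rangle$ of the \emph{standard} one-loop structure on $BS(k_i,k_in_i)$, the depth profile is not literally $\{n_i^{\ell}\}$. For instance $k_i \in \mathcal{D}(G_i,V_i)$: a length-two segment in the standard tree that goes up one edge (index $k_i$) and down a different edge (index $k_in_i$) is unimodular of index $k_i$ by Remark~\ref{shortindex}. Example~\ref{bsexample} actually computes $\mathcal{D}(G_i,V)$ for $V$ the vertex group of the index-$k_i$ subgroup $G_i' \cong \bigvee_{k_i} BS(1,n_i)$, that is, $V = \langle a^{k_i}\rangle$. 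The repair is immediate: either replace your $V_i$ by this $V$ (nothing else in your argument changes, since $V$ is still elliptic in $G_i$ and your $W$ can be chosen inside both $V$'s), or keep your $V_i$ and argue via equivalence classes---your displayed equation gives $\mathcal{D}(G_1,V_1) \sim \mathcal{D}(G_2,V_2)$, Theorem~\ref{invariant}\eqref{t2} gives $\mathcal{D}(G_i,V_i) \sim \{n_i^{\ell}\}$, and the Proposition following Theorem~\ref{invariant} then yields $\{n_1^{\ell}\}/r = \{n_2^{\ell}\}/r'$ for some $r,r'$, from which the eventual-ratio argument forces $n_1=n_2$.
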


Theorem \ref{invariant} will follow quickly from the next lemma. 

\begin{lemma}\label{invariantlemma} 
Let $G$ be a non-elementary GBS group and $V < G$ a non-trivial 
elliptic subgroup. Suppose $g$ is hyperbolic and $q(g) = \pm 1$. 
\begin{enumerate}
\item\label{l1} $D_V(g) = D_V(g^k)$ for all $k \geq 1$. 
\item\label{l2} If\/ $V' < V$ with $[V:V']=r$ then $D_{V'}(g) = D_V(g) /
  \gcd(r,D_V(g))$. 
\end{enumerate}
\end{lemma}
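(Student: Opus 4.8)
The plan is to work inside a fixed GBS tree $X$ for $G$, using the formula $D_V(g) = [V : V \cap V^g]$ together with the interpretation of this quantity via segments in $X$. Fix a vertex $x$ with $G_x = V$ (we may assume $V = G_x$; if not, pass to the vertex stabilizer containing $V$ and use Theorem~\ref{invariant}\eqref{t2} afterwards, or simply run the same segment argument with $V$ in place of $G_x$). Let $\sigma = [x, gx]$ be the segment from $x$ to $gx$. Then $V \cap V^g = G_x \cap G_{gx} = G_\sigma$, so $D_V(g) = i(\sigma)$, the index of $\sigma$. The hypothesis $q(g) = \pm 1$ means, by \eqref{modulus}, that $i(\sigma) = i(\overline{\sigma})$; this symmetry is the crucial structural input and I expect the verification that it persists under the relevant concatenations to be the main obstacle.

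For part \eqref{l1}: the segment from $x$ to $g^k x$ is the concatenation $\tau = \sigma_1 \cdots \sigma_k$ where $\sigma_j = [g^{j-1}x, g^j x] = g^{j-1}\sigma$. Each $\sigma_j$ is a translate of $\sigma$, so $i(\sigma_j) = i(\sigma)$ and $i(\overline{\sigma_j}) = i(\overline{\sigma})$ for all $j$; write $N = i(\sigma) = i(\overline{\sigma})$. I would then apply Lemma~\ref{segmentindex} to this concatenation: $i(\tau)$ is the smallest positive $r$ with $N \mid r$ and $\prod_{i=1}^{\ell} N \mid r \prod_{i=1}^{\ell-1} N$ for $\ell = 2, \dots, k$, i.e. $N^\ell \mid r N^{\ell-1}$, i.e. $N \mid r$. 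So every condition reduces to $N \mid r$, giving $i(\tau) = N = i(\sigma)$. Hence $D_V(g^k) = D_V(g)$. (One should note this uses that $g^k$ is still not hyperbolic of nontrivial modulus, but $q(g^k) = q(g)^k = \pm 1$, and the case $g$ elliptic is trivial since then $\sigma$ is a point and $D_V(g^k) = 1 = D_V(g)$; if $g$ is hyperbolic the segments above genuinely concatenate along its axis.)

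For part \eqref{l2}: with $V' < V = G_x$ of index $r$, write $V = \langle y \rangle$ so $V' = \langle y^r \rangle$. Choose a GBS tree in which there is an edge $e$ at $x$ with $i(\overline{e}) = r$ pointing to a new vertex $x'$ with $G_{x'}$ containing $\langle y^r \rangle = V'$ — more directly, I would use Lemma~\ref{segmentindex} applied to the concatenation $[x', x] \cdot \sigma$, exactly as in the proof of Lemma~\ref{segmentindex} itself: $D_{V'}(g)$ is the index of $\sigma$ when the initial vertex group is shrunk to $V'$, which by that lemma is the smallest $r'$ such that $\langle y^{r'} \rangle$ fixes $\sigma$; but $\langle y^s \rangle$ fixes $\sigma$ iff $s$ is a multiple of $D_V(g) = N$, now intersected with the constraint coming from $V'$. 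Concretely, $\langle y^s \rangle$ fixes $\sigma$ and lies in $V'$ iff $r \mid s$ and $N \mid s$, so the smallest such is $\mathrm{lcm}(r, N)$, and $D_{V'}(g) = [V' : \langle y^{\mathrm{lcm}(r,N)}\rangle] = \mathrm{lcm}(r,N)/r = N/\gcd(r,N)$. Here is where the modulus hypothesis matters: without $i(\sigma) = i(\overline{\sigma})$ the set of $s$ with $\langle y^s\rangle$ fixing $\sigma$ need not be exactly the multiples of $N$ once we also move the far endpoint's group, but the symmetric case gives the clean description above. Theorem~\ref{invariant} then follows: part \eqref{t2} is immediate from \eqref{l2} applied termwise to the definition of $\mathcal{D}$, and part \eqref{t1} follows because $q(g) = \pm 1$ elements of $G'$ are cofinal enough — using \eqref{l1}, any $g \in G$ with $q(g) = \pm 1$ has $g^k \in G'$ for some $k$ with $D_V(g^k) = D_V(g)$, so $\mathcal{D}(G,V) \subseteq \mathcal{D}(G',V)$, and the reverse inclusion is trivial.
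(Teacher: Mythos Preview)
Your argument for part~\eqref{l1} has a real gap. You write $[x, g^k x]$ as the concatenation $\sigma_1 \cdots \sigma_k$ with $\sigma_j = g^{j-1}[x,gx]$ and then invoke Lemma~\ref{segmentindex}. But that lemma applies only to \emph{segments} (embedded edge paths), and this concatenation is usually not embedded: if $g$ is hyperbolic and $x \notin X_g$, the path $[x,gx]$ runs from $x$ down to the axis, along it, then back out to $gx$, so $[gx,g^2x]$ begins by retracing that last portion and the concatenation backtracks. Your parenthetical dismissal of the elliptic case is also wrong: $g$ elliptic means $g$ fixes \emph{some} vertex, not $x$ itself, so $[x,gx]$ need not be a point and $D_V(g)$ can exceed $1$. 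The paper repairs both cases by introducing an auxiliary vertex $y$ (the closest point of $X_g$ to $x$, respectively the closest fixed vertex of $g$) and writing $[x,gx] = \tau \cdot \sigma_1 \cdot g\overline{\tau}$ (respectively $\tau \cdot g\overline{\tau}$) with $\tau = [x,y]$; then $[x,g^kx]$ honestly decomposes as $\tau \cdot (\sigma_1\cdots\sigma_k)\cdot g^k\overline{\tau}$ (respectively $\tau \cdot g^k\overline{\tau}$) with all pieces embedded, and the two concatenations have matching indices along them. Your reduction to $V = G_x$ matches the paper's, though you should cite part~\eqref{l2} of the present lemma rather than Theorem~\ref{invariant}\eqref{t2}, which is proved \emph{from} this lemma.

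For part~\eqref{l2} your computation actually produces $[V' : V' \cap G_{\sigma}] = [V' : V' \cap V^g]$, whereas $D_{V'}(g) = [V' : V' \cap (V')^g]$ with $(V')^g$ a proper subgroup of $V^g$ in general. These two indices do coincide when $q(g)=\pm 1$, but that is precisely what must be argued, and your closing remark gestures at it without supplying the verification. The paper avoids the tree picture here entirely and works directly with generators of the cyclic groups: writing $V=\langle x\rangle$, $V^g=\langle y\rangle$, the modulus hypothesis gives $V\cap V^g = \langle x^d\rangle = \langle y^d\rangle$, and then $\langle x^r\rangle \cap \langle y^r\rangle$ is computed inside this common cyclic subgroup, which is exactly where the symmetry $d = i(\sigma) = i(\overline{\sigma})$ is used.
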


\begin{proof}
First we prove \eqref{l2}. Let $d =
D_V(g)$ and let $V = \langle x \rangle$, $V^g = \langle y
\rangle$. Then $V \cap V^g$ is the subgroup $\langle
x^d\rangle$. Since $q(g) = \pm 1$ we have $[V^g: V \cap V^g] = d$ and
so $V \cap V^g = \langle y^d\rangle$ also. We wish to identify the
subgroup $V' \cap V'^g$, given that $V' = \langle x^r\rangle$ and
$V'^g = \langle y^r \rangle$. 
Since $\langle x^r\rangle \cap \langle y^r\rangle \subset \langle x
\rangle \cap \langle y \rangle = \langle x^d \rangle = \langle y^d
\rangle$, it follows that 
\begin{align}
\langle x^r \rangle \cap \langle y^r\rangle \ 
&= \ \langle x^r \rangle \cap \langle x^d \rangle \cap \langle y^r
\rangle \cap \langle y^d \rangle \notag\\
&= \ \langle x^{rd/\gcd(r,d)} \rangle \cap \langle
  y^{rd/\gcd(r,d)}\rangle \notag\\ 
&= \ \langle x^{rd/\gcd(r,d)} \rangle \ = \  \langle
  y^{rd/\gcd(r,d)}\rangle. \label{line3} 
\end{align}
The equalities of line \eqref{line3} hold because both $\langle
x^{rd/\gcd(r,d)} \rangle$ and $\langle y^{rd/\gcd(r,d)}\rangle$ are
the unique subgroup of $\langle x^d \rangle = \langle y^d \rangle$ of
index $r/\gcd(r,d)$. Since $V' \cap V'^g = \langle x^{rd/\gcd(r,d)}
\rangle$ and $V' = \langle x^r\rangle$, we conclude that $D_{V'}(g) =
d/\gcd(r,d)$. 

For \eqref{l1}, let $X$ be a GBS tree for $G$. Then $V< G_x$ for some
vertex $x\in X$ and $[G_x:V] < \infty$. From \eqref{l2} we can say
that $D_{G_x}(g) = D_{G_x}(g^k)$ implies $D_V(g) = D_V(g^k)$, and thus
it suffices to establish \eqref{l1} when $V$ is a vertex stabilizer. 

Let $V = G_x$ for some vertex $x \in X$. Let $X_g \subset X$ be the
axis of $g$ and $y\in X_g$ the vertex
closest to $x$. Define the segments $\tau = [x,y]$, $\sigma_1 =
[y,gy]$, $\sigma_i = g^{i-1}\sigma_1$ (for $1 < i \leq k$) and $\sigma
= \sigma_1 \dotsm \sigma_k = [y, g^k y]$. Then $[x,gx] = \tau \cdot
\sigma_1 \cdot g \overline{\tau}$ and $[x, g^k x] = \tau \cdot \sigma
\cdot g^k \overline{\tau}$. 

Let $d = i(\sigma_1)$. Since $q(g) = \pm 1$ we have
$i(\overline{\sigma}_1) = d$ as well, and so $i(\sigma_i)=
i(\overline{\sigma}_i) = d$ for all $i$. By Lemma \ref{segmentindex}
it follows that $i(\sigma) = i(\overline{\sigma}) = d$. Now both
$[x,gx]$ and $[x,g^kx]$ are expressed as concatenations of three
segments, with matching indices along each. Hence (by Lemma
\ref{segmentindex}) $i([x,gx]) = i([x,g^kx])$. Since $D_V(g) =
i([x,gx])$ and $D_V(g^k) = [x, g^k x])$, these $V$-depths are equal. 
%
\end{proof}

\begin{proof}[Proof of Theorem \ref{invariant}.] For \eqref{t1}, it is
immediate that $\mathcal{D}(G',V) \subset \mathcal{D}(G,V)$. For the
reverse inclusion, given $D_V(g) \in \mathcal{D}(G,V)$, choose $k$
such that $g^k \in G'$. Then $D_V(g) \in \mathcal{D}(G',V)$ by 
Lemma \ref{invariantlemma}\eqref{l1}. Conclusion \eqref{t2} follows
directly from Lemma \ref{invariantlemma}\eqref{l2}. 
\end{proof}

\begin{definition}
Recall from \eqref{modulus} that if $\sigma = [x, gx]$ then
$\abs{q(g)} = i(\sigma) / i(\overline{\sigma})$. Thus, for any segment
$\sigma$, we will call $\sigma$ \emph{unimodular} if $i(\sigma) =
i(\overline{\sigma})$. 
\end{definition}

The next result gives a useful description of the depth profile. 

\begin{proposition}\label{index-depth} 
Suppose $V = G_v$ for some vertex $v$. 
Define the set
\[ \mathcal{I}(v) \ = \ \{i(\sigma) \mid \sigma \text{ is a
    non-trivial unimodular segment with endpoints in } Gv\}.\]
Then \[\mathcal{D}(G,V) \ \subseteq \ \mathcal{I}(v) \ \subseteq
\ \mathcal{D}(G,V) \cup \{1\}.\]
\end{proposition}

In particular, if $1 \in \mathcal{D}(G,V)$ then $\mathcal{D}(G,V) =
\mathcal{I}(v)$. An example where these two sets differ is $BS(1,n)$
with its standard tree as in Example \ref{bsexample}. The depth
profile is empty but there exist unimodular segments of
index $1$, namely, all segments $[x,gx]$ with $g$ elliptic. 

\begin{proof}
The first inclusion is immediate from the definitions. If $g$ is
hyperbolic with $q(g)  =\pm 1$ then $D_V(g) = [V : V\cap V^g] =
i([v,gv])$ and $[v,gv]$ is non-trivial and unimodular. For the second
inclusion, suppose $\sigma$ is a non-trivial unimodular segment with
endpoints in $Gv$. Applying a translation in $G$ (which does not
change $i(\sigma)$), we may assume that $\partial_0(\sigma) = v$. If
$i(\sigma) >1$ then by Lemma \ref{hypelement} we have that $\sigma = 
[v,hv]$ with $h$ hyperbolic, and so $i(\sigma) = D_V(h) \in
\mathcal{D}(G,V)$. Otherwise, $i(\sigma) = 1$. 
\end{proof}

\begin{remark}
The definition of the depth profile has some similarities with the
\emph{scale function} of a totally disconnected locally compact group,
defined by Willis \cite{willis}. For the scale to be defined in our
setting one must form the completion of the GBS group $G$ with respect
to the elliptic subgroups to obtain a 
totally disconnected locally compact group $\overline{G}$. In the case of $G =
BS(m,n)$ 
this was carried out by Elder and Willis in
\cite{elderwillis}. They also gave a complete computation of
the scale function. The scale of an element of $\overline{G}$ turns out to depend
only on its $t$--exponent. Write $m=ab$, $n=bc$ where $b =
\gcd(m,n)$ and let $\epsilon\co \overline{G} \to \Z$ be the $t$--exponent
function, defined by continuous extension from $G$. Then the scale
of $g$ is 
either $\abs{a}^{\epsilon(g)}$ or 
  $\abs{c}^{-\epsilon(g)}$, whichever one has non-negative exponent. 

Recall that the depth profile uses only the hyperbolic elements of
modulus $\pm 1$, which in this case are those with $t$--exponent zero
(and scale $1$). 
Consider the standard HNN structure for $G$ and let $V$ be the
index--$b$ subgroup of the 
vertex group. (This choice gives the cleanest result.) Let us assume that $\abs{m}, \abs{n}
> 1$, so that hyperbolic elements of modulus $\pm 1$ exist. 
A lengthy computation using
Proposition \ref{index-depth} and Lemma \ref{segmentindex}/Remark
\ref{shortindex} shows that 
\[\mathcal{D}(G,V) \ = \  \{ \, \abs{a}^i \abs{c}^j \mid i,j \in
  \N \cup \{0\}, \ i+j>0 \,\}.\]
It appears that the depth profile and the scale function are detecting 
quite different information. 
\end{remark}

\section{The main examples}\label{examplessec}

Our main examples will be lattices in $\Aut(X_{k, k n})$ for $k, n >
1$. We fix some notation: define $a,b,c$ such that $b = \gcd(k,n)$, $k
= ab$, and $n =  bc$.

\subsection{The lattice $G_1$}
This group is the index $2$ subgroup
of $BS(k, kn)$ generated by $a$, $tat^{-1}$, and $t^2$. It has a
labeled graph description as shown below, with two vertices and two
edges. 
\[G_1: \quad 
\vcenter{\hbox{
\begin{tikzpicture} 
\scriptsize

\draw[very thick] (2,2) ellipse (1 and 1);

\filldraw[fill=white,thick] (2,1) circle (.7mm);
\filldraw[fill=black,thick] (2,3) circle (.7mm);

\draw[very thick,->] (3,1.96) -- (3,1.95);
\draw[very thick,->] (1,2.04) -- (1,2.05);

\draw[violet] (1.9,2.97) node[anchor=south east] {$kn$};
\draw[violet] (2.1,2.97) node[anchor=south west] {$k$};
\draw[violet] (2.4,.75) node[anchor=base] {$kn$};
\draw[violet] (1.7,.75) node[anchor=base] {$k$};
\end{tikzpicture}
\hspace*{1.02cm}
}}\]

Depth profiles of $BS(k,kn)$ were computed in Example
\ref{bsexample} (see also Proposition \ref{wedgecomputation}
below). Hence, for a suitable choice of elliptic subgroup 
$V_1 < G_1$, the depth profile is
\[ \mathcal{D}(G_1, V_1) \ = \ \{\, n^i \mid i \in \N \cup
  \{0\}\,\}.\]

\subsection{The lattice $G_2$}
This group is defined by the directed labeled graph $(B,\mu)$
below. It is bipartite with two vertices $v$ (white) and $u$ (black),
and $k+1$ directed edges. The edges $e_1, \dotsc, e_k$ are directed
from $u$ to $v$ and the edge $e_0$ is directed from $v$ to $u$. We
have $\mu(e_0) = k$, $\mu(\overline{e}_0) = kn$ and $\mu(e_i) = 1$,
$\mu(\overline{e}_i) = n$ for $i \not= 0$. 
\[G_2: \quad 
\vcenter{\hbox{
\begin{tikzpicture}
\small

\draw[very thick] (2,2) ellipse (1.2 and 1);
\draw[very thick] (2,1) arc(-45:45:1.4142126);
\filldraw[fill=white,thick] (2,1) circle (.7mm);
\filldraw[fill=black,thick] (2,3) circle (.7mm);

\draw[very thick,->] (3.2,1.96) -- (3.2,1.95);
\draw[very thick,->] (2.4142126,1.96) -- (2.4142126,1.95);
\draw[very thick,->] (0.8,2.04) -- (0.8,2.05);
\draw[color=black] (2.83,1.97) node {$\dotsm$};

\draw (2.1,2) node {$e_1$};
\draw (3.25,2) node[anchor=west] {$e_k$};
\draw (0.75,2) node[anchor=east] {$e_0$};

\scriptsize
\draw[violet] (1.9,2.97) node[anchor=south east] {$kn$};

\draw[violet] (2.85,2.97) node {$1$};

\draw[violet] (2.4,2.75) node {$1$};
\draw[violet] (2.4,1.25) node {$n$};
\draw[violet] (2.85,1.03) node {$n$};

\draw[violet] (1.7,.75) node[anchor=base] {$k$};
\end{tikzpicture}
\hspace*{1.02cm}
}}\]

Both $G_1$ and $G_2$ are lattices in $\Aut(X_{k,kn})$ by Proposition
\ref{latticeprop}.

\subsection{The subgroup $H_2$} 
We will define a finite index subgroup $H_2 < G_2$
by constructing an admissible branched covering $(A, \lambda)$ of 
$(B,\mu)$. This subgroup will aid us in computing a depth profile
for $G_2$.

The graph $A$ has one vertex $v_1$ above $v$ and $b$ vertices $u_1,
\dotsc, u_b$ above $u$. For each $i$ there are $k$ directed edges from
$u_i$ to $v_1$, mapping to $e_1, \dotsc, e_k$ respectively, and $a$
directed edges from $v_1$ to 
$u_i$, mapping to $e_0$. If $e$ is a directed edge above $e_0$ then
its labels are $\lambda(e) = 1$ and $\lambda(\overline{e}) = b^2
c$. If $e$ is a directed edge above $e_i$ ($i \geq 1$) then its labels
are $\lambda(e) = 1$ and $\lambda(\overline{e}) = c$.

Finally, the
degree function of the branched covering is given by $d(v_1) = k$,
$d(u_i) = a$, $d(e) = 1$ for every edge $e$ above $e_0$, and $d(e) =
a$ for every edge $e$ above $e_i$ ($i \geq 1$). See Figure
\ref{h2graph}. One may verify that the conditions of Definition
\ref{admissibledef} are met. 

\begin{figure}[!ht]
  $\vcenter{\hbox{ 
\begin{tikzpicture} [scale=1.2]

\begin{scope}[xshift=-0.3cm]
\small
\begin{scope}[rotate around={-30:(2,3)}]
\draw[very thick] (0,3) -- (2,3);
\filldraw[fill=black,thick] (0,3) circle (.5mm);
\draw (0,3) node[anchor=east] {$u_b \ $};
\draw[very thick,->] (1.04,3) -- (1.05,3);
\end{scope}

\begin{scope}[rotate around={30:(2,3)}]
\draw[very thick] (0,3) -- (2,3);
\filldraw[fill=black,thick] (0,3) circle (.5mm);
\draw (0,3) node[anchor=east] {$u_1 \ $};
\draw[very thick,->] (1.04,3) -- (1.05,3);
\end{scope}
\filldraw[fill=white,thick] (2,3) circle (.5mm);
\draw (2,3) node[anchor=west] {$v_1$};
\draw[color=black] (0.268,3.09) node {$\vdots$};

\draw[thick,color=mygreen,->] (0.268,1.6) -- (0.268,1);
\draw[thick,color=mygreen,->] (1.174,1.9) -- (1.174,1.1);
\draw[thick,color=mygreen,->] (2,2.2) -- (2,1.2);

\draw[very thick] (0.268,.5) -- (2,.5);
\draw[very thick,->] (1.174,.5) -- (1.184,.5);
\filldraw[fill=black,thick] (0.268,.5) circle (.5mm);
\draw (.268,.5) node[anchor=east] {$u \ $};
\filldraw[fill=white,thick] (2,.5) circle (.5mm);
\draw (2,.5) node[anchor=west] {$\, v$};
\draw (1.18,.4) node[anchor=north] {$e_i$};

\scriptsize
\draw[violet] (0.5,4.09) node {$1$};
\draw[violet] (1.85,3.3) node {$c$};
\draw[violet] (0.5,1.91) node {$1$};
\draw[violet] (1.85,2.7) node {$c$};

\draw[color=mygreen] (0.268,1.3) node[anchor=east] {$a$};
\draw[color=mygreen] (1.174,1.5) node[anchor=east] {$a$};
\draw[color=mygreen] (2,1.72) node[anchor=east] {$k$};

\draw[violet] (0.5,.53) node[anchor=south] {$1$};
\draw[violet] (1.81,.53) node[anchor=south] {$n$};
\end{scope}

\small
\begin{scope}[rotate around={30:(3,3)}]
\draw[very thick] (3,3) arc(120:60:2);
\draw[very thick] (3,3) arc(240:300:2);
\filldraw[fill=black,thick] (5,3) circle (.5mm);
\draw[color=black] (4,3.08) node[transform shape,scale=.7] {$\vdots$};
\draw (5,3) node[anchor=west] {$ \ u_b$};
\draw[very thick,->] (4.04,3.2679) -- (4.05,3.2679);
\draw[very thick,->] (4.04,2.7321) -- (4.05,2.7321);
\end{scope}

\begin{scope}[rotate around={-30:(3,3)}]
\draw[very thick] (3,3) arc(120:60:2);
\draw[very thick] (3,3) arc(240:300:2);
\filldraw[fill=black,thick] (5,3) circle (.5mm);
\draw[color=black] (4,3.08) node[transform shape,scale=.7] {$\vdots$};
\draw (5,3) node[anchor=west] {$ \ u_1$};
\draw[very thick,->] (4.04,3.2679) -- (4.05,3.2679);
\draw[very thick,->] (4.04,2.7321) -- (4.05,2.7321);
\end{scope}
\filldraw[fill=white,thick] (3,3) circle (.5mm);
\draw (3,3) node[anchor=east] {$v_1$};
\draw[color=black] (4.732,3.09) node {$\vdots$};

\draw[thick,color=mygreen,->] (4.732,1.6) -- (4.732,1);
\draw[thick,color=mygreen,->] (3.906,1.8) -- (3.906,1);
\draw[thick,color=mygreen,->] (3,2.2) -- (3,1.2);

\draw[very thick] (3,.5) -- (4.732,.5);
\draw[very thick,->] (3.906,.5) -- (3.916,.5);
\filldraw[fill=white,thick] (3,.5) circle (.5mm);
\draw (3,.5) node[anchor=east] {$v \,$};
\filldraw[fill=black,thick] (4.732,.5) circle (.5mm);
\draw (4.732,.5) node[anchor=west] {$ \ u$};
\draw (3.92,.4) node[anchor=north] {$e_0$};

\scriptsize
\draw[color=black] (4.05,3.6) node {$a$};
\draw[color=black] (4.05,2.4) node {$a$};

\draw[violet] (3.01,3.35) node {$1$};
\draw[violet] (3.41,3.23) node {$1$};
\draw[violet] (4.38,4.2) node {$b^2 c$};
\draw[violet] (4.83,3.67) node {$b^2 c$};
\draw[violet] (4.83,2.4) node {$b^2 c$};
\draw[violet] (4.38,1.83) node {$b^2 c$};
\draw[violet] (3.02,2.65) node {$1$};
\draw[violet] (3.41,2.77) node {$1$};

\draw[color=mygreen] (3,1.72) node[anchor=east] {$k$};
\draw[color=mygreen] (3.906,1.42) node[anchor=east] {$1$};
\draw[color=mygreen] (4.732,1.3) node[anchor=west] {$a$};

\draw[violet] (3.19,.53) node[anchor=south] {$k$};
\draw[violet] (4.45,.53) node[anchor=south] {$kn$};
\end{tikzpicture}
}} \quad 
  \quad H_2\colon \ \  {\displaystyle \bigvee_{i=1}^b} \ \ 
  \vcenter{\hbox{
{
\begin{tikzpicture}[scale=1.2]
\small

\draw[very thick] (2,2) ellipse (1.2 and 1);
\draw[very thick] (2,1) arc(-45:45:1.4142126);
\draw[very thick] (2,1) arc(225:135:1.4142126);
\filldraw[fill=white,thick] (2,1) circle (.5mm);
\filldraw[fill=black,thick] (2,3) circle (.5mm);

\draw[very thick,->] (3.2,1.96) -- (3.2,1.95);
\draw[very thick,->] (2.4142126,1.96) -- (2.4142126,1.95);
\draw[very thick,->] (1.5857874,2.04) -- (1.5857874,2.05);
\draw[very thick,->] (0.8,2.04) -- (0.8,2.05);
\draw[color=black] (2.83,1.97) node {$\dotsm$};
\draw[color=black] (1.21,1.97) node {$\dotsm$};

\draw (2,0.9) node[anchor=north] {$v_1$};
\draw (2,3.1) node[anchor=south] {$u_i$};

\scriptsize
\draw[color=black] (1.1928937,2.03) node[anchor=south] {$a$};
\draw[color=black] (2.8071063,2.03) node[anchor=south] {$k$};

\draw[violet] (0.96,2.9) node {$b^2 c$};

\draw[violet] (2.89,2.9) node {$1$};

\draw[violet] (2.43,2.68) node {$1$};
\draw[violet] (1.44,2.68) node {$b^2 c$};
\draw[violet] (2.38,1.25) node {$c$};
\draw[violet] (1.63,1.25) node {$1$};
\draw[violet] (2.83,1.03) node[anchor=base] {$c$};

\draw[violet] (1.16,1.0) node[anchor=base] {$1$};
\end{tikzpicture}
}
}}$
\caption{The admissible branched covering defining $H_2$. Above each
  $e_i$ ($i \geq 1$)  there are $b$ edges as shown. Above $e_0$  there
  are $k = ab$ edges as shown. Overall, $(A,\lambda)$ is a wedge
  product of $b$ copies of a graph, joined at the vertex $v_1$. 
  }\label{h2graph}
\end{figure}
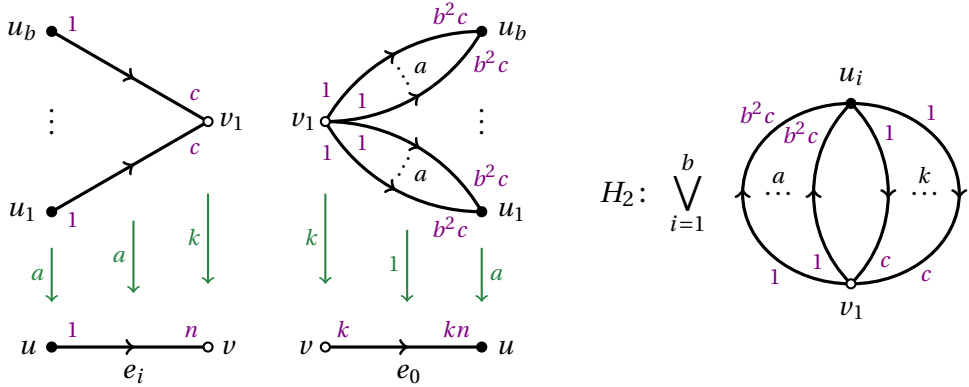
By collapsing each of the edges above $e_k$, and then performing $k-1$
slide moves, we find that 
\begin{align}
  H_2 \ &\cong \ \bigvee_k BS(1,n^2) \ \vee \bigvee_{b(k-1)} BS(c,c)\notag  \\
  &\cong \ BS(1,n^2) \ \vee \ \bigvee_{k-1} BS(1,1) \ \vee
  \bigvee_{b(k-1)} BS(c,c). \label{h2-final}
\end{align}
A depth profile of $H_2$ can now be computed using the next result.

\begin{proposition}\label{wedgecomputation}
  Suppose $G = BS(1,N) \vee \bigvee_{i=1}^r BS(n_i,n_i)$ for some $r
  \geq 1$ and suppose that $N>1$, each $n_i$ divides $N$, and the set
  $\{ n_1, \dotsc, n_r, N \, \}$ is closed under taking $\lcm$ and
  contains $1$. Let $V$ be
  the vertex group. Then
  \begin{equation}\label{formula}
    \mathcal{D}(G,V)  \ = \
    \{ \, N^in_j \mid i\in \N \cup\{0\}, j = 1, \dotsc, r \, \}.
    \end{equation}
\end{proposition}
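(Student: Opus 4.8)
The plan is to compute the depth profile of $G = BS(1,N) \vee \bigvee_{i=1}^r BS(n_i,n_i)$ directly from the labeled graph, which has a single vertex and $r+1$ loops: one loop with labels $1,N$ and loops with labels $n_i,n_i$. Let $V = G_x$ where $x$ is the lift of the unique vertex; so $V$ is the vertex group. I would first record the modular homomorphism: $q$ is computed on $H_1(A)$ by the formula \eqref{mh}, so the loop with labels $(1,N)$ contributes the value $N^{\pm 1}$ and each loop with labels $(n_i,n_i)$ contributes $1$. Thus an element $g$ has $q(g) = \pm 1$ precisely when the total ``$N$-loop winding'' of its image in $H_1(A)$ is zero. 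By Lemma~\ref{invariantlemma}\eqref{l1}, $D_V(g) = D_V(g^\ell)$, so it suffices to understand $D_V(g)$ for a cyclically reduced word whose image in $H_1(A)$ has zero $N$-winding, and we may as well take $g$ hyperbolic (the elliptic case gives $D_V(g) = 1 = N^0 n_j$ with $j$ chosen so that $n_j = 1$, which exists by hypothesis).

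For a hyperbolic $g$, let $y$ be a vertex on its axis and $\sigma = [y,gy]$ the translation segment, projecting to a reduced closed loop $\gamma = (f_1,\dots,f_s)$ in $A$. Then $D_V(g) = i([x,gx])$, and since $[x,gx]$ decomposes as $\tau \cdot \sigma \cdot g\overline{\tau}$ with $\tau = [x,y]$, and $i(\tau) = i(\overline\tau) = 1$ because $V = G_x$ is a full vertex group in a minimal tree with this labeled graph (every edge off the axis has index dividing... — more carefully, one picks $y$ with $[x,y]$ lying over a path that can be absorbed; the cleanest route is to take $y = x$ itself after conjugating $g$ so its axis passes through $x$, which is harmless for the depth computation). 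So $D_V(g) = i(\sigma)$, and I would apply Lemma~\ref{segmentindex}: writing $\sigma = \sigma_1 \cdots \sigma_s$ as a concatenation of edges with $n_t = i(\sigma_t) = \abs{\lambda(f_t)}$ and $m_t = i(\overline\sigma_t) = \abs{\lambda(\overline f_t)}$, the index $i(\sigma)$ is the least $r$ with $n_1 \mid r$ and $\prod_{t=1}^\ell n_t \mid r \prod_{t=1}^{\ell-1} m_t$ for all $\ell$.

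Now the key combinatorial analysis: each edge traversal of the $(1,N)$-loop contributes a factor of $N$ either ``upward'' (an $n_t = N$, $m_t = 1$ step) or ``downward'' ($n_t = 1$, $m_t = N$), and the zero-winding condition says these balance. Each traversal of an $(n_i,n_i)$-loop has $n_t = m_t = n_i$. One should track the running quantity $R_\ell = \operatorname{lcm}$-type accumulation; I expect that the constraints force $i(\sigma)$ to be $N^i n_j$ where $N^i$ is $N$ raised to the maximum ``depth below the starting height'' reached by the partial-sum walk (excursions of the $N$-loop), and $n_j$ is the lcm of all the $n_i$'s appearing in $\gamma$ — here is exactly where the hypothesis that $\{n_1,\dots,n_r,N\}$ is closed under $\operatorname{lcm}$ and contains $1$ is used, guaranteeing this lcm is again one of the $n_j$ (and that $i \geq 0$, $j$ valid). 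Conversely, every $N^i n_j$ is realized: take $\gamma$ to be the loop that goes down the $N$-edge $i$ times, around the $n_j$-loop once, then up the $N$-edge $i$ times; Lemma~\ref{segmentindex} gives its index as $N^i n_j$. This gives both inclusions in \eqref{formula}.

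**The main obstacle** will be the combinatorial bookkeeping in Lemma~\ref{segmentindex}: proving that for an arbitrary zero-winding reduced loop $\gamma$, the least $r$ satisfying all the divisibility constraints \eqref{pathindex} is exactly $N^{(\text{max excursion depth})}\cdot \operatorname{lcm}_i(n_i)$. The subtlety is that the $N$-factors and the $n_i$-factors interact through the $\gcd$/lcm structure (since $n_i \mid N$, a factor of $n_i$ can be partially ``absorbed'' into a pending power of $N$), so one must argue carefully that no cancellation reduces the answer below the claimed value and that the walk's deepest excursion — rather than, say, its net displacement — is what governs the $N$-exponent. I would handle this by induction on $\ell$, maintaining the invariant that the minimal $r_\ell$ satisfying the first $\ell$ constraints equals $N^{d_\ell} m_\gamma$ where $d_\ell$ is the depth of the deepest excursion among the first $\ell$ steps relative to the start and $m_\gamma$ is the running lcm of the $n_i$'s seen so far; the closure hypothesis keeps $m_\gamma$ in the set at each stage.
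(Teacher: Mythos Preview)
Your overall strategy---reduce to computing indices $i(\sigma)$ of unimodular segments via Lemma~\ref{segmentindex}, then analyze the divisibility constraints---is sound in principle and different from the paper's. The paper instead classifies every unimodular segment of length $>1$ structurally as one of three types (a concatenation $\sigma_1\sigma_2$ of unimodular pieces; or $e_1\tau e_2$ with $\tau$ unimodular and $e_1,e_2$ an ascending/descending pair in either order), then proves $i(\sigma)\in D$ by induction on length using Remark~\ref{shortindex} at each step. This structural decomposition replaces your running-constraint bookkeeping entirely.

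There is a genuine gap in your proposal: the formula you conjecture for $i(\sigma)$, namely $N^{(\text{max excursion depth})}\cdot\lcm(\text{$n_i$'s seen})$, is false, and so is the inductive invariant you propose to maintain. Take the three-edge path $\gamma$: first traverse an $(n_j,n_j)$ loop, then a descending $N$-edge, then an ascending $N$-edge. The constraints from Lemma~\ref{segmentindex} are $n_j\mid r$, then $n_jN\mid rn_j$ (i.e.\ $N\mid r$), then $n_jN\mid rn_j$ again; the least such $r$ is $\lcm(n_j,N)=N$, not $Nn_j$. Your invariant already fails at $\ell=2$: you would predict $r_2=N^1\cdot n_j$, but the correct value is $N$. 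The point is that an $n_j$-loop traversed at depth~$0$ gets completely absorbed by a subsequent descent, whereas the same loop at depth~$1$ (descend, loop, ascend) genuinely contributes $n_j$ on top of $N$. Your formula does not distinguish these cases. You correctly flagged this absorption phenomenon as the main obstacle, but the invariant you wrote down does not survive it; the correct bookkeeping would have to track, for each $n_j$-step, the depth at which it occurs, and this makes the direct approach substantially messier than you anticipate. The paper's three-type decomposition sidesteps this entirely: type~(1) gives an $\lcm$ (and $D$ is $\lcm$-closed by hypothesis), type~(2) divides by $N$ or drops to $1$, and type~(3) multiplies by $N$---no running state is needed.
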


\begin{proof}
Let $X$ be the Bass--Serre tree for the given GBS structure of
$G$. The subgroup $V$ is the stabilizer of a vertex $v$. Note that $G$
acts transitively on $V(X)$. Also, since $n_i =  1$  
for some $i$, the stable letter from $BS(n_i,n_i)$ is a
hyperbolic element with modulus $1$ and $V$-depth $1$. So
$\mathcal{D}(G,V) = \mathcal{I}(v)$, by Proposition
\ref{index-depth}. That is, $\mathcal{D}(G,V)$ is the set of indices
$i(\sigma)$ of non-trivial unimodular segments $\sigma$ in $X$. 

Call an edge $e \in E(X)$ \emph{ascending} if $i(e) = 1$ and
$i(\overline{e}) = N$, and \emph{descending} if $\overline{e}$ is
ascending. Note that every edge in $X$ is either ascending,
descending, or unimodular. Now every unimodular segment $\sigma$ of
length $> 1$ has one of the following forms: 
\begin{enumerate}
\item\label{c1} $\sigma_1 \sigma_2$ with $\sigma_1, \sigma_2$ unimodular 
\item\label{c2} $e_1 \tau e_2$ with $\tau$ unimodular and $e_1$
  ascending, $e_2$ descending 
\item\label{c3} $e_1 \tau e_2$ with $\tau$ unimodular and $e_1$
  descending, $e_2$ ascending.
\end{enumerate}

  Let $D$ denote the right hand side of \eqref{formula}. It is easily
verified that $D$ is closed under taking $\lcm$. We can now show that
every unimodular $\sigma$ has index in $D$ by induction on
length. Unimodular edges have index $n_j$ for some $j$, which are in
$D$. If $\sigma$ is of type \eqref{c1} then by Remark \ref{shortindex}
we have $i(\sigma) = \lcm(i(\sigma_1),i(\sigma_2)) \in D$. If $\sigma$
is of type \eqref{c2} with $i(\tau) = N^i n_j$, using Remark
\ref{shortindex} one finds that $i(\sigma) = N^{i-1}n_j$ if $i \geq 1$
and $i(\sigma) = 1$ if $i=0$. Hence $i(\sigma)\in D$. If $\sigma$ is
of type \eqref{c3} then $i(\sigma) = N i(\tau) \in D$.

Finally, consider a segment $\sigma = e_1 \dotsm e_i \tau e'_1
\dotsm e'_i$ where each $e_k$ is descending, $\tau$ is a unimodular
edge with index $n_j$, and each $e'_k$ is ascending. Then $i(\sigma) =
N^in_j$.  
\end{proof}

\begin{theorem}\label{mainthm}
If\/ $\gcd(k,n) \not= 1$ then the lattices $G_1, G_2 < \Aut(X_{k,kn})$
are not abstractly commensurable. 
\end{theorem}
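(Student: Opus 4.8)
The plan is to compute (or at least constrain) the depth profiles of $G_1$ and $G_2$, using the tools from Section~\ref{invariantsec}, and to show that these cannot agree up to the truncated-division equivalence. For $G_1$ we already have $\mathcal{D}(G_1,V_1) = \{\,n^i \mid i \in \N\cup\{0\}\,\}$ from Example~\ref{bsexample}, so the equivalence class of $\mathcal{D}(G_1,V_1)$ is characterised by the fact that, with finitely many exceptions, any two successive elements have ratio exactly $n$ (this is preserved under $S \mapsto S/r$, since dividing by $\gcd(r,n^i)$ only affects finitely many initial terms when $n>1$). So it suffices to show that $\mathcal{D}(G_2,V_2)$, for a suitable elliptic subgroup $V_2 < G_2$, does \emph{not} have this property --- equivalently, that $\mathcal{D}(G_2,V_2)/r$ fails to have ``eventual successive ratio $n$'' for every $r$.

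The route to $\mathcal{D}(G_2,V_2)$ goes through the finite-index subgroup $H_2 < G_2$ constructed above. By Theorem~\ref{invariant}\eqref{t1}, if $V_2$ is chosen to lie in $H_2$ then $\mathcal{D}(G_2,V_2) = \mathcal{D}(H_2,V_2)$. Now $H_2 \cong BS(1,n^2) \vee \bigvee_{k-1} BS(1,1) \vee \bigvee_{b(k-1)} BS(c,c)$ by \eqref{h2-final}, and this is exactly of the form handled by Proposition~\ref{wedgecomputation}: here $N = n^2$, and the multiset of ``loop moduli'' is $\{1\}$ (with multiplicity $k-1$) together with $\{c\}$ (with multiplicity $b(k-1)$), which is closed under $\lcm$ and contains $1$, and each divides $N = n^2$ since $c \mid n \mid n^2$ (recall $n = bc$). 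Provided $n^2 > 1$ and $b(k-1) \geq 1$ (which holds since $k > 1$), Proposition~\ref{wedgecomputation} gives
\[
\mathcal{D}(H_2, V) \ = \ \{\, (n^2)^i \mid i \in \N\cup\{0\}\,\} \ \cup \ \{\, (n^2)^i c \mid i \in \N\cup\{0\}\,\}.
\]
So with $V_2$ taken to be the vertex group of this wedge decomposition, $\mathcal{D}(G_2,V_2)$ equals this set. (One should double-check the edge case $c = 1$, i.e.\ $b = n$, $k = an$: then $\{1, n^2, n^4, \dots\}$ again has eventual successive ratio $n^2 \neq n$, so the argument still goes through; I'd fold this into the general argument below rather than treat it separately.)

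The final step is to compare. Write $S_2 = \mathcal{D}(G_2,V_2)$ as above. When sorted, $S_2$ has successive ratios alternating (eventually) between $c$ and $n^2/c$ --- more precisely, between the $i$-th and $(i{+}1)$-th power-of-$n^2$ block the elements are $\dots, (n^2)^i, (n^2)^i c, (n^2)^{i+1}, (n^2)^{i+1}c, \dots$ once $c < n^2$, i.e.\ always (since $c \mid n$ and $n > 1$ force $c < n^2$). Applying $\divideby_r$ changes only finitely many terms, so for every $r$ the set $S_2/r$ still has infinitely many consecutive ratios equal to $c$ and infinitely many equal to $n^2/c$. If $S_2/r = \mathcal{D}(G_1,V_1)/r'$ for some $r, r'$, the right side has all-but-finitely-many successive ratios equal to $n$; hence we'd need $c = n$ and $n^2/c = n$, forcing $c = n$ and thus $b = 1$, i.e.\ $\gcd(k,n) = 1$, contradicting the hypothesis. (If $c = 1$ the right side would need ratio $1$ infinitely often, again impossible for $n > 1$.) Therefore no such $r, r'$ exist, the equivalence classes of depth profiles differ, and by Theorem~\ref{invariant} the groups $G_1$ and $G_2$ are not abstractly commensurable.

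The main obstacle I expect is bookkeeping rather than conceptual: verifying that the hypotheses of Proposition~\ref{wedgecomputation} really are met for $H_2$ (in particular the ``closed under $\lcm$ and contains $1$'' condition for the loop moduli $\{1, c\}$, which is automatic, and the divisibility $c \mid n^2$), handling the degenerate parameter values $c = 1$ and $k = 1$ cleanly, and making the ``eventual successive ratio'' invariant robust enough to survive arbitrary $\divideby_r$. None of these is deep, but they all need to be stated carefully so the comparison at the end is airtight.
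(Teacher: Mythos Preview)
Your proposal is correct and follows essentially the same approach as the paper: compute $\mathcal{D}(G_2,V_2)$ via the finite-index subgroup $H_2$ using Proposition~\ref{wedgecomputation}, obtain the set $\{n^{2i}\}\cup\{n^{2i}c\}$, and compare eventual successive ratios with those of $\mathcal{D}(G_1,V_1)=\{n^i\}$ to conclude that $c\ne n$ forces inequivalence. Your treatment is, if anything, slightly more careful than the paper's about verifying the hypotheses of Proposition~\ref{wedgecomputation} and about the edge case $c=1$.
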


\begin{proof}
Consider the GBS structure \eqref{h2-final} for $H_2$ and let $V_2$ be
its vertex group. By Theorem \ref{invariant} and Proposition
\ref{wedgecomputation} we have 
\[ \mathcal{D}(G_2, V_2) \ = \ \mathcal{D}(H_2,V_2) \ = \ \{\, n^{2i}
\mid i \in \N\cup \{0\}\,\} \ \cup \ \{\, n^{2i}c \mid i \in \N\cup
\{0\}\,\}. \]
We also have
\[ \mathcal{D}(G_1, V_1) \ = \ \{\, n^i \mid i \in \N \cup \{0\}\,\}\]
as mentioned earlier. 
Enumerating the elements of $\mathcal{D}(G_i,V_i)$ in order, notice
that each element divides the next one. Taking the ratios of
successive elements one obtains the sequences $(n, n, n, \dotsc )$ for
$i=1$ and $(c, n^2/c, c, n^2/c, \dotsc )$ for $i=2$. The tails of
these ratio sequences are unchanged when passing from
$\mathcal{D}(G_i, V_i)$ to $\mathcal{D}(G_i,V_i)/r$ for any $r \in
\N$, because the values of $\gcd(r,n^i)$, $\gcd(r,n^{2i})$, and
$\gcd(r,n^{2i}c)$ stabilize as $i \to \infty$, all to the same
number. (This number is $r$ with all of its prime factors not
dividing $n$ removed.) Since $c\not= n$, the 
two tails will never agree, and so the two depth profiles are
inequivalent. 
\end{proof}

\begin{remark}\label{mainthm-remark}
If $\gcd(k,n) =1$ then $c = n$ and the depth profiles of $G_1$ and
$G_2$  coincide. However, the depth profile is not failing us as an
invariant, as it turns out the groups are commensurable in this
case. The labeled graph on the right hand side of Figure \ref{h2graph}
is an admissible branched covering of the labeled graph for $G_1$,
since $b=1$ and $k=a$. 
\end{remark}

\begin{example}
Figure \ref{easycase} illustrates $G_1$ and $G_2$ in the simplest
case, when $X_{k,kn} = X_{2,4}$. In this case the lattices are
commensurable to $BS(2,4)$ and $BS(4,16)$ respectively, which are
incommensurable by \cite{CRKZ}. The vertical maps are admissible
branched coverings and the horizontal arrows are elementary
deformations. A similar phenomenon occurs whenever $n=k>1$: in
$\Aut(X_{k,k^2})$ there are incommensurable lattices commensurable to
$BS(k,k^2)$ and $BS(k^2,k^4)$ respectively. 
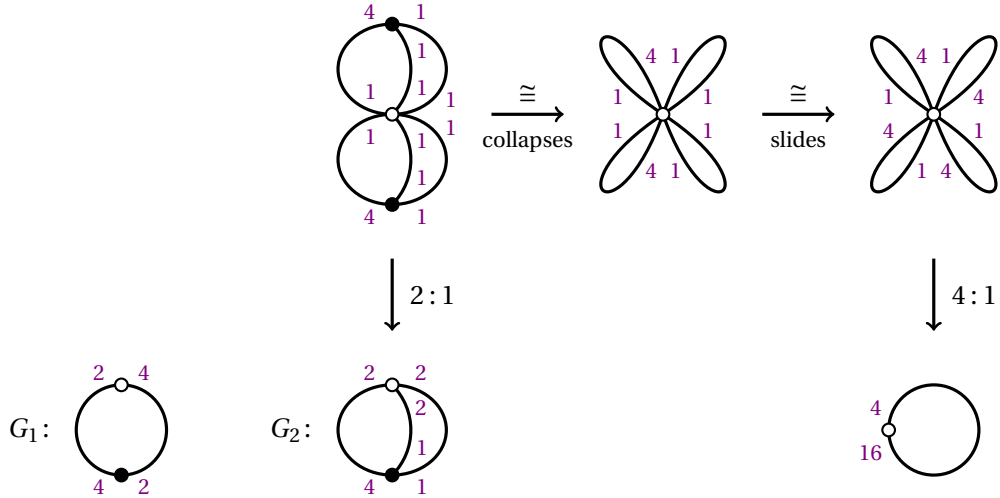
\begin{figure}[!ht]
\begin{tikzpicture}[scale=1.2]
\small

\draw[very thick] (2,1.5) circle (0.5);
\filldraw[fill=black,thick] (2,1) circle (.7mm);
\filldraw[fill=white,thick] (2,2) circle (.7mm);
\draw (1.4,1.5) node[anchor=east] {$G_1\colon$};

\draw[very thick] (5,1.5) ellipse (0.6 and 0.5);
\draw[very thick] (5,1) arc(-45:45:0.706063);
\filldraw[fill=black,thick] (5,1) circle (.7mm);
\filldraw[fill=white,thick] (5,2) circle (.7mm);
\draw (4.3,1.5) node[anchor=east] {$G_2\colon$};

\draw[very thick,->] (5,3.4) -- (5,2.6);

\draw[very thick] (5,4.5) ellipse (0.6 and 0.5);
\draw[very thick] (5,4) arc(-45:45:0.706063);
\draw[very thick] (5,5.5) ellipse (0.6 and 0.5);
\draw[very thick] (5,5) arc(-45:45:0.706063);
\filldraw[fill=black,thick] (5,4) circle (.7mm);
\filldraw[fill=white,thick] (5,5) circle (.7mm);
\filldraw[fill=black,thick] (5,6) circle (.7mm);

\draw[very thick,->] (6.1,5) -- (6.9,5);
\draw (6.5,5) node[anchor=south] {$\cong$};

\begin{scope}[rotate around={7:(8,5)}]
\draw[very thick] (8,5) .. controls (8.675,6.35) and (9.35,5.675) .. (8,5);
\draw[very thick] (8,5) .. controls (7.325,3.65) and (6.65,4.325) .. (8,5);
\end{scope}
\begin{scope}[rotate around={-7:(8,5)}]
\draw[very thick] (8,5) .. controls (9.35,4.325) and (8.675,3.65) .. (8,5);
\draw[very thick] (8,5) .. controls (6.65,5.675) and (7.325,6.35) .. (8,5);
\end{scope}
\filldraw[fill=white,thick] (8,5) circle (.7mm);

\draw[very thick,->] (9.1,5) -- (9.9,5);
\draw (9.5,5) node[anchor=south] {$\cong$};

\begin{scope}[rotate around={7:(11,5)}]
\draw[very thick] (11,5) .. controls (11.675,6.35) and (12.35,5.675) .. (11,5);
\draw[very thick] (11,5) .. controls (10.325,3.65) and (9.65,4.325) .. (11,5);
\end{scope}
\begin{scope}[rotate around={-7:(11,5)}]
\draw[very thick] (11,5) .. controls (12.35,4.325) and (11.675,3.65) .. (11,5);
\draw[very thick] (11,5) .. controls (9.65,5.675) and (10.325,6.35) .. (11,5);
\end{scope}
\filldraw[fill=white,thick] (11,5) circle (.7mm);

\draw[very thick,->] (11,3.4) -- (11,2.6);
\draw[very thick] (11,1.5) circle (0.5);
\filldraw[fill=white,thick] (10.5,1.5) circle (.7mm);

\draw (5.1,3) node[anchor=west] {$2:1$};
\draw (11.1,3) node[anchor=west] {$4:1$};

\scriptsize
\draw (6.5,4.9) node[anchor=north] {$\text{collapses}$};
\draw (9.5,4.9) node[anchor=north] {$\text{slides}$};

\draw[violet] (1.75,2.14) node {$2$};
\draw[violet] (2.25,2.14) node {$4$};
\draw[violet] (1.75,0.87) node {$4$};
\draw[violet] (2.25,0.87) node {$2$};

\draw[violet] (4.75,2.14) node {$2$};
\draw[violet] (4.75,0.87) node {$4$};
\draw[violet] (5.32,1.75) node {$2$};
\draw[violet] (5.33,1.3) node {$1$};
\draw[violet] (5.32,2.14) node {$2$};
\draw[violet] (5.33,0.87) node {$1$};

\draw[violet] (4.75,3.87) node {$4$};
\draw[violet] (5.33,3.87) node {$1$};
\draw[violet] (5.33,4.3) node {$1$};
\draw[violet] (5.33,4.71) node {$1$};
\draw[violet] (5.33,5.3) node {$1$};
\draw[violet] (5.33,5.71) node {$1$};

\draw[violet] (4.75,6.14) node {$4$};
\draw[violet] (5.32,6.14) node {$1$};
\draw[violet] (5.65,4.85) node {$1$};
\draw[violet] (5.65,5.16) node {$1$};
\draw[violet] (4.76,4.75) node {$1$};
\draw[violet] (4.76,5.25) node {$1$};

\draw[violet] (7.5,4.82) node {$1$};
\draw[violet] (7.5,5.19) node {$1$};
\draw[violet] (8.5,4.82) node {$1$};
\draw[violet] (8.5,5.19) node {$1$};
\draw[violet] (7.87,5.65) node {$4$};
\draw[violet] (8.14,5.65) node {$1$};
\draw[violet] (7.87,4.37) node {$4$};
\draw[violet] (8.14,4.37) node {$1$};

\draw[violet] (10.5,4.82) node {$4$};
\draw[violet] (10.5,5.19) node {$1$};
\draw[violet] (11.5,4.82) node {$1$};
\draw[violet] (11.5,5.19) node {$4$};
\draw[violet] (10.87,5.65) node {$4$};
\draw[violet] (11.14,5.65) node {$1$};
\draw[violet] (10.87,4.37) node {$1$};
\draw[violet] (11.14,4.37) node {$4$};

\draw[violet] (10.36,1.75) node {$4$};
\draw[violet] (10.3,1.25) node {$16$};
\end{tikzpicture}
\caption{Lattices $G_1, G_2$ in $\Aut(X_{2,4})$. $G_1$ is an index 2
  subgroup of $BS(2,4)$. $G_2$ contains an index 2 subgroup isomorphic
  to an index 4 subgroup of $BS(4,16)$ ($ \not\sim BS(2,4)$). For
  appropriate choices of elliptic subgroups $V_i < G_i$ the depth
  profiles are $\mathcal{D}(G_1,V_1) = \{\, 2^i \,\}$ and
  $\mathcal{D}(G_2,V_2) = \{\, 4^i \,\}$. 
  }\label{easycase}
\end{figure}
\end{example}

\begin{remark}\label{envelope}
Regarding Figure \ref{easycase}, notice that the finite-index subgroup
of $G_2$ is a lattice both in $\Aut(X_{2,4})$ and
$\Aut(X_{4,16})$, even though their ``standard'' lattices $BS(2,4)$
and $BS(4,16)$ are not commensurable. 
\end{remark}

\section{Further cases of $\Aut(X_{k,kn})$} \label{furthersec}

We return to the situation of Remark \ref{mainthm-remark},
when $\gcd(k,n) = 1$. 

\subsection{The lattice $G_3$}
Suppose that $p$ is a non-trivial divisor of $n$ (not necessarily
prime) such that $p < k$. Let $l = k-p$. The labeled graph below
defines a lattice $G_3 < \Aut(X_{k,kn})$ by Proposition
\ref{latticeprop}. It has vertices $v$ (white) and $u$ (black),
directed edges $e_1, \dotsc, e_k$ from $u$ to $v$, directed edges
$f_1, \dotsc, f_l$ from $v$ 
to $u$, and a directed edge $f_0$ from $v$ to $u$. The labels are
given by $\lambda(e_i) = 1$, $\lambda(\overline{e}_i) = n$,
$\lambda(f_0) = p$, $\lambda(\overline{f}_0) = pn$, and $\lambda(f_i)
= 1$, $\lambda(\overline{f}_i) = n$ ($i > 0$). 
\[G_3\colon \ \ \vcenter{\hbox{
\begin{tikzpicture}
\small
\draw[very thick] (2,2) ellipse (1.5 and 1);
\draw[very thick] (2,1) arc(-45:45:2.5 and 1.4142126);
\draw[very thick] (2,1) arc(225:135:2.5 and 1.4142126);
\draw[very thick] (2,1) arc(210:150:2);
\filldraw[fill=white,thick] (2,1) circle (.7mm);
\filldraw[fill=black,thick] (2,3) circle (.7mm);

\draw[very thick,->] (3.5,1.96) -- (3.5,1.95);
\draw[very thick,->] (2.735,1.96) -- (2.735,1.95);
\draw[very thick,->] (0.5,2.04) -- (0.5,2.05);
\draw[very thick,->] (1.265,2.04) -- (1.265,2.05);
\draw[very thick,->] (1.732,2.04) -- (1.732,2.05);
\draw[color=black] (0.9,1.97) node {$\dotsm$};
\draw[color=black] (3.15,1.97) node {$\dotsm$};

\scriptsize
\draw[color=black] (0.9, 2.01) node[anchor=south] {$l$}; 
\draw[color=black] (3.1, 2.01) node[anchor=south] {$k$}; 
\draw[violet] (0.82,2.9) node {$n$};
\draw[violet] (1.32,2.68) node {$n$};
\draw[violet] (2.1,2.58) node {$pn$};
\draw[violet] (2.68,2.68) node {$1$};
\draw[violet] (3.18,2.9) node {$1$};
\draw[violet] (2.06,1.37) node {$p$};
\draw[violet] (2.66,1.32) node {$n$};
\draw[violet] (3.16,1.1) node {$n$};
\draw[violet] (1.32,1.35) node {$1$};
\draw[violet] (0.82,1.13) node {$1$};
\end{tikzpicture}
\hspace*{.85cm}
}}\]

\begin{theorem}\label{g3thm}
  Suppose $n$ has a non-trivial divisor $p\not= n$ such that $p <
  k$. Then the lattices $G_1, G_3 < \Aut(X_{k,kn})$ are not abstractly
  commensurable. 
\end{theorem}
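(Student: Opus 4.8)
The plan is to compute a depth profile of $G_3$ and compare it with $\mathcal{D}(G_1,V_1)=\{\,n^i\mid i\in\N\cup\{0\}\,\}$, following the proof of Theorem~\ref{mainthm}. First I would put $G_3$ into a more convenient shape: collapsing the edge $e_1$ (which carries a label equal to $1$) merges the two vertices and turns the other edges into loops, and a short computation with the collapse rule of Proposition~\ref{gbsmoves} gives
\[
G_3\ \cong\ BS(p,pn^2)\ \vee\ \bigvee_{k-p}BS(1,n^2)\ \vee\ \bigvee_{k-1}BS(n,n).
\]
The obstruction to applying Proposition~\ref{wedgecomputation} directly is the summand $BS(p,pn^2)$, which has neither of the two permitted forms $BS(1,N)$, $BS(m,m)$.

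There are two natural ways around this. Since $G_3$ now acts transitively on the vertices of its Bass--Serre tree $X$, one can argue as in the proof of Proposition~\ref{wedgecomputation}: with $V_3<G_3$ a vertex stabilizer, $\mathcal{D}(G_3,V_3)$ is the set of indices of unimodular segments in $X$, so one repeats that proof's case analysis, now carrying a third kind of non-unimodular edge --- one with index profile $(p,pn^2)$ --- and evaluates indices with the closed formula of Remark~\ref{shortindex}. Alternatively, one passes to a finite-index subgroup $H_3<G_3$ defined by an explicit admissible branched covering of labeled graphs, in the spirit of the covering defining $H_2<G_2$, arranged so that the $BS(p,pn^2)$ loop is ``unwound'' --- every edge lying over it acquiring index profile $(1,N)$ --- so that after collapsing and sliding $H_3$ becomes a wedge $BS(1,N)\vee\bigvee_j BS(n_j,n_j)$ with each $n_j$ dividing $N$ and $\{n_1,\dots,n_r,N\}$ closed under $\lcm$ and containing $1$; Proposition~\ref{wedgecomputation} then applies to $H_3$, and Theorem~\ref{invariant} transfers the answer to $G_3$. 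I expect either route to give
\[
\mathcal{D}(G_3,V_3)\ =\ \{\,n^i\mid i\in\N\cup\{0\}\,\}\ \cup\ \{\,p\,n^{2i}\mid i\in\N\cup\{0\}\,\}
\]
up to the equivalence of Theorem~\ref{invariant}, with $N=n^2$. The main obstacle is the bookkeeping here: keeping the extended case analysis honest, or, on the covering route, producing a degree function $d$ with $d(e)=d(\overline e)$ that resolves the $BS(p,pn^2)$ loop (a cover concentrated at a single vertex cannot do this, since resolving that loop forces $p\mid d$, which is incompatible with admissibility along a $BS(1,n^2)$ loop, where one needs $\gcd(d,n^2)=1$).

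Granting the computation, incommensurability follows as in Theorem~\ref{mainthm}. Enumerating $\mathcal{D}(G_3,V_3)$ in increasing order and recording ratios of successive elements gives a sequence that is eventually periodic but not eventually constant: because $p\mid n$ and $1<p<n$, each $pn^{2i}$ lies strictly between $n^{2i}$ and $n^{2i+1}$, so the ratio $p$ recurs infinitely often. Passing from $\mathcal{D}(G_3,V_3)$ to $\mathcal{D}(G_3,V_3)/r$ for $r\in\N$ changes only finitely many terms, since $\gcd(r,n^i)$ and $\gcd(r,pn^{2i})$ both stabilise as $i\to\infty$ to the largest divisor of $r$ all of whose prime factors divide $n$ (this uses $p\mid n$); hence the tail of the ratio sequence depends only on the equivalence class of the depth profile. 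The corresponding ratio sequence for $\mathcal{D}(G_1,V_1)$ is constantly $n$, and $p\not=n$, so the two equivalence classes are distinct. Therefore $G_1$ and $G_3$ are not abstractly commensurable.
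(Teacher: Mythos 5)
Your overall strategy is the paper's: collapse and slide $G_3$ into a wedge, compute a depth profile, and compare tails of ratio sequences exactly as in Theorem~\ref{mainthm}. But as written there is a genuine gap at the central step: the depth profile of $G_3$ is never actually established. Collapsing $e_1$ absorbs $u$ into $v$ and turns $f_0$ into the loop $BS(p,pn^2)$, which, as you observe, blocks Proposition~\ref{wedgecomputation}; you then only sketch two workarounds and leave the computation as ``I expect either route to give\dots''. The first route amounts to proving a new, more general version of Proposition~\ref{wedgecomputation} with a third edge type, which you have not done; for the second route you yourself point out an obstruction to the natural covering. So the decisive equality for $\mathcal{D}(G_3,V_3)$ is asserted rather than proved, and everything after it is conditional on it.

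The missing idea is simply a better choice of collapse, and it is exactly what the paper does: collapse one of the edges $f_i$ with $i\geq 1$ (such an edge exists precisely because $p<k$). Then $v$ is absorbed into $u$, the labels at $v$ are multiplied by $n$, and $f_0$ becomes the \emph{unimodular} loop $BS(pn,pn)$ instead of $BS(p,pn^2)$; after $k-1$ slides,
\[
G_3\ \cong\ BS(1,n^2)\ \vee\ \bigvee_{k-1}BS(1,1)\ \vee \bigvee_{k-p-1}BS(n,n)\ \vee\ BS(pn,pn),
\]
to which Proposition~\ref{wedgecomputation} applies verbatim, since $1$, $n$, $pn$ all divide $n^2$ (using $p\mid n$) and the label set is closed under $\lcm$. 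This yields $\mathcal{D}(G_3,V_3)=\{\,n^{2i}\,\}\cup\{\,n^{2i+1}\,\}\cup\{\,pn^{2i+1}\,\}$, with the middle family absent when $p=k-1$, and the ratio tail is $3$--periodic $(n,p,n/p)$, respectively $2$--periodic $(np,n/p)$, never eventually constant $n$; your tail-stability and comparison argument then goes through and is essentially the paper's. For what it is worth, the set you guessed is equivalent to the true profile: dividing it by $n$ gives exactly the paper's computed set, consistent with Theorem~\ref{invariant}\eqref{t2}, since your base vertex group is $G_v$ while the paper's is $G_u\leq G_v$ of index $n$. So your endgame is sound; the gap is that the computation it rests on was left unproved, and the paper's collapse choice makes that computation immediate.
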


\begin{proof}
  Collapsing $f_1$ and performing $k-1$ slide moves, we obtain
  \begin{align*}
    G_3 \ &\cong \ \bigvee_k BS(1,n^2) \ \vee \! 
      \bigvee_{k-p-1} BS(n,n) \vee BS(pn,pn) \\
      &\cong \ BS(1,n^2) \ \vee \ \bigvee_{k-1} BS(1,1) \ \vee \! 
      \bigvee_{k-p-1} BS(n,n) \ \vee \ BS(pn, pn).
  \end{align*}
  Let $V_3$ be the vertex group. Proposition \ref{wedgecomputation}
  provides the depth profile:
  \[ \mathcal{D}(G_3, V_3) \ = \
\begin{cases}
  \{\, n^{2i}\, \} \cup \{\, n^{2i+1}\, \} \cup \{\, n^{2i+1} p\, \} &
  \text{ if } p<k-1 \\
  \{\, n^{2i}\, \} \cup \{\, n^{2i+1} p\, \} & \text{ if }
  p=k-1. \\
  \end{cases} \]
In both cases, enumerating the elements in order, each element divides
the next. Hence there is a sequence of successive ratios that can be
compared to the sequence $(n, n, n, \dotsc)$ arising from
$\mathcal{D}(G_1,V_1)$. Exactly as in the proof of Theorem
\ref{mainthm}, the tails of the sequences, modulo shifting, are
invariants of the equivalence classes of depth profiles.

The sequences of ratios are
\[\begin{array}{ll}
\text{the $3$--periodic sequence } n, p, n/p, \ \dotsc & \text{ if }
  p<k-1 \\
  \text{the $2$--periodic sequence } np, n/p, \ \dotsc & \text{ if }
  p=k-1. \\
\end{array}\]
Neither of these sequences eventually agree with $(n, n, n, \dotsc)$,
so the two depth profiles are inequivalent in both cases. 
\end{proof}

\subsection{The lattice $G_4$} Suppose $n<k$ and $k \equiv 1 \mod
n$. Let $l = (k-1)/n$. The labeled graph below defines a lattice $G_4
< \Aut(X_{k,kn})$. It is bipartite with vertices $v$ (white) and $u$
(black), has directed edges $e_1, \dotsc, e_k$ from $u$ to $v$ with
labels $\lambda(e_i) = 1$, $\lambda(\overline{e}_i) = n$, and directed
edges $f_0, \dotsc, f_l$ from $v$ to $u$ with labels $\lambda(f_0) =
1$, $\lambda(\overline{f}_0) = n$, $\lambda(f_i) = n$,
$\lambda(\overline{f}_i) = n^2$ ($i \geq 1$). 
\[ G_4\colon \ \ 
\vcenter{\hbox{
\begin{tikzpicture}
\small
\draw[very thick] (2,2) ellipse (1.5 and 1);
\draw[very thick] (2,1) arc(-45:45:2.5 and 1.4142126);
\draw[very thick] (2,1) arc(225:135:2.5 and 1.4142126);
\draw[very thick] (2,1) arc(210:150:2);
\filldraw[fill=white,thick] (2,1) circle (.7mm);
\filldraw[fill=black,thick] (2,3) circle (.7mm);

\draw[very thick,->] (3.5,1.96) -- (3.5,1.95);
\draw[very thick,->] (2.735,1.96) -- (2.735,1.95);
\draw[very thick,->] (0.5,2.04) -- (0.5,2.05);
\draw[very thick,->] (1.265,2.04) -- (1.265,2.05);
\draw[very thick,->] (1.732,2.04) -- (1.732,2.05);
\draw[color=black] (0.9,1.97) node {$\dotsm$};
\draw[color=black] (3.15,1.97) node {$\dotsm$};

\scriptsize
\draw[color=black] (0.9, 2.01) node[anchor=south] {$l$}; 
\draw[color=black] (3.1, 2.01) node[anchor=south] {$k$}; 
\draw[violet] (0.845,2.933) node {$n^2$};
\draw[violet] (1.345,2.713) node {$n^2$};
\draw[violet] (2.02,2.58) node {$n$};
\draw[violet] (2.68,2.68) node {$1$};
\draw[violet] (3.18,2.9) node {$1$};
\draw[violet] (2.02,1.4) node {$1$};
\draw[violet] (2.66,1.32) node {$n$};
\draw[violet] (3.16,1.1) node {$n$};
\draw[violet] (1.32,1.32) node {$n$};
\draw[violet] (0.82,1.1) node {$n$};
\end{tikzpicture}
\hspace*{.85cm}
}}\]

\begin{theorem}\label{g4thm}
Suppose $n < k$ and $k \equiv 1 \mod n$. Then the lattices $G_1, G_4 <
\Aut(X_{k,kn})$ are not abstractly commensurable. 
\end{theorem}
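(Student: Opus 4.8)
The plan is to follow the pattern of Theorems~\ref{mainthm} and~\ref{g3thm}: simplify the labeled graph of $G_4$ by an elementary deformation to a form covered by Proposition~\ref{wedgecomputation}, compute $\mathcal{D}(G_4,V_4)$, and compare it with $\mathcal{D}(G_1,V_1)=\{\,n^i\mid i\in\N\cup\{0\}\,\}$. First I would collapse the edge $f_0$. Since $\lambda(f_0)=1$, this absorbs $v$ into $u$ and multiplies the $v$-end label of every other edge at $v$ by $\lambda(\overline{f}_0)=n$; thus the edges $\overline{e}_1,\dots,\overline{e}_k$ become $k$ loops with labels $\{1,n^2\}$ and the edges $f_1,\dots,f_l$ become $l$ loops with labels $\{n^2,n^2\}$. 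Hence $G_4\cong\bigvee_k BS(1,n^2)\vee\bigvee_l BS(n^2,n^2)$, and $k-1$ slide moves (sliding all but one $BS(1,n^2)$ along the remaining one, exactly as in the earlier proofs) give
\[ G_4 \ \cong \ BS(1,n^2)\ \vee\ \bigvee_{k-1}BS(1,1)\ \vee\ \bigvee_{l}BS(n^2,n^2). \]

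Let $V_4$ be the vertex group. Here $l=(k-1)/n\geq 1$ since $n<k$, and $N:=n^2>1$ since $n>1$, while the parameter set $\{1,n^2\}$ is closed under $\lcm$ and contains $1$; so Proposition~\ref{wedgecomputation} applies and gives
\[ \mathcal{D}(G_4,V_4) \ = \ \{\,n^{2i}\mid i\in\N\cup\{0\}\,\}\ \cup\ \{\,n^{2i+2}\mid i\in\N\cup\{0\}\,\} \ = \ \{\,n^{2i}\mid i\in\N\cup\{0\}\,\}. \]
Now I would argue just as in the proof of Theorem~\ref{mainthm}. Enumerating $\mathcal{D}(G_1,V_1)$ and $\mathcal{D}(G_4,V_4)$ in increasing order, each term divides the next, and the sequences of successive ratios are $(n,n,n,\dots)$ and $(n^2,n^2,n^2,\dots)$. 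These ratio sequences have the same tails, up to a finite shift, after passing from $\mathcal{D}(G_i,V_i)$ to $\mathcal{D}(G_i,V_i)/r$, because $\gcd(r,n^i)$ and $\gcd(r,n^{2i})$ both stabilize as $i\to\infty$. Since $n>1$ gives $n\neq n^2$, the tails never coincide, so the depth profiles lie in different equivalence classes, and Theorem~\ref{invariant} implies that $G_1$ and $G_4$ are not abstractly commensurable.

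The step needing care is the first one: verifying that collapsing $f_0$ and performing the $k-1$ slides leaves $G_4$ exactly in the displayed normal form, with the hypotheses of Proposition~\ref{wedgecomputation} ($N>1$, $\lcm$-closure of $\{1,n^2\}$, and $l\geq 1$ so that at least one $BS(n^2,n^2)$ factor remains) all in force. Once that normal form is established, the depth-profile computation and the comparison with $G_1$ are routine and run parallel to Theorem~\ref{mainthm}.
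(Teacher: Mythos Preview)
Your proof is correct and follows essentially the same route as the paper. The only difference is cosmetic: the paper performs $2l+k-1$ slide moves to reach the slightly simpler normal form $BS(1,n^2)\vee\bigvee_{l+k-1}BS(1,1)$, whereas you stop after $k-1$ slides at $BS(1,n^2)\vee\bigvee_{k-1}BS(1,1)\vee\bigvee_{l}BS(n^2,n^2)$; both forms satisfy the hypotheses of Proposition~\ref{wedgecomputation} and yield the same depth profile $\{\,n^{2i}\,\}$, so the comparison with $\mathcal{D}(G_1,V_1)=\{\,n^{i}\,\}$ goes through identically.
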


\begin{proof}
Collapsing $f_0$ and then performing $2l+k-1$ slide moves yields 
  \begin{align*}
    G_4 \ &\cong \ \bigvee_l BS(n^2,n^2) \ \vee \ 
      \bigvee_{k} BS(1,n^2) \\
      &\cong \ BS(1,n^2) \ \vee \bigvee_{l+ k-1} BS(1,1).
  \end{align*}
The depth profile is $\{ \, n^{2i} \mid i \in \N \cup \{0\} \, \}$,
which is inequivalent to the depth profile of $G_1$. 
\end{proof}

\subsection{The remaining cases}\label{lastcase}
There are two remaining cases of $X_{k,kn}$ not covered by Theorems
\ref{mainthm}, \ref{g3thm}, and \ref{g4thm}. The first is when every
prime factor of $n$ is greater than $k$. The second is when $n$ is
prime, $n <k$, and $k \not\equiv 0,1 \mod n$ (for instance,
$X_{5,15}$). The constructions seen thus far all result in lattices
having equivalent depth profiles. Ad hoc arguments seem to indicate
that some of these examples (in the second case) are still
incommensurable, but a complete presentable proof is elusive so far.

\section{Cayley graphs}\label{cayleysec}

Given a finitely generated group $G$ let $S$ be a symmetric finite
generating set that does not contain $1$. The \emph{Cayley graph}
$\Cay(G,S)$ is a connected graph with edges labeled by elements of
$S$, defined as follows. The vertex set is $G$. For every $g\in G$ and
$s \in S$ there is a unique edge $e$ with $\partial_0(e) = g$ and
$\partial_1(e) = gs$. This edge is assigned the label $s$. Thus,
$\overline{e}$ has label $s^{-1}$. 

We say that $G_1$ and $G_2$ \emph{admit isomorphic Cayley graphs} if
there exist generating sets $S_1, S_2$ as above such that $\Cay(G_1,S_1)$
and $\Cay(G_2,S_2)$ are isomorphic as unlabeled graphs. Note that this
is not a transitive relation (see \cite{hainkescheele}). 

There are several interesting examples of groups that are quite
different from one another admitting isomorphic Cayley graphs. Most
involve torsion in an essential way, since finite groups of the same
cardinality always admit isomorphic Cayley graphs. If $A$ and $B$ are
two such finite groups, then $A \wr \Z$ and $B \wr \Z$ admit
isomorphic Cayley graphs, by \cite{dyubina}. (Such groups are never
finitely presented, however.) Similarly, if $G_A$ and $G_B$ are
extensions of $A$ and $B$ respectively by a group $Q$, then
$G_A$ and $G_B$ admit isomorphic Cayley graphs
(see \cite[proof of Corollary 1.13]{MMV} for instance). 

If one seeks incommensurable \emph{torsion-free} groups with
isomorphic Cayley graphs, then some well known examples can be found
among lattices in products of locally finite trees. This phenomenon
was first observed and discussed by Wise in \cite{wisethesis}; the
lattices of Burger and Mozes \cite{burgermozes} also provide
examples. In \cite{dergachevaklyachko} Dergacheva and Klyachko
constructed a pair of incommensurable torsion-free groups with
isomorphic Cayley graphs, via amalgams of Baumslag--Solitar
groups. Our lattices here provide new examples, which moreover are
also coherent. It is an open question whether there exist coherent
lattices in products of trees \cite[Problem 10.10]{wisecsc}. 

If a group acts cocompactly on a connected CW complex $X$, freely and
transitively on the vertices, then the $1$--skeleton $X^{(1)}$ is a
Cayley graph for $G$. This is the situation for the torsion-free
examples just mentioned. Unfortunately, our lattices $G_i$ don't act
in this way on $X_{k,kn}$. Instead we have the following result. 

\begin{proposition}\label{cayleyprop}
Suppose $G_1$ and $G_2$ act cocompactly on a connected graph $\Gamma$,
freely on the vertices, with a common vertex orbit. Then $G_1$ and
$G_2$ admit isomorphic Cayley graphs. 
\end{proposition}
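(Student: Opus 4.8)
The plan is to build a graph that serves simultaneously as a Cayley graph for $G_1$ and for $G_2$, by enlarging the common vertex orbit to a full set of $G_i$--orbits in a way that is symmetric between the two groups. Let $V_0 \subset V(\Gamma)$ be the common $G_1$-- and $G_2$--orbit of vertices. Fix a base vertex $x_0 \in V_0$; since both actions are free, there are bijections $G_i \to V_0$ given by $g \mapsto g x_0$, and we identify $V_0$ with $G_1$ and with $G_2$ via these. The idea is \emph{not} to use $\Gamma$ itself (whose vertices outside $V_0$ may lie in orbits behaving differently under the two groups), but to replace $\Gamma$ by a new connected graph $\Gamma'$ with vertex set exactly $V_0$, obtained by contracting: declare two vertices of $V_0$ adjacent in $\Gamma'$ whenever they are joined by an edge path in $\Gamma$ whose interior vertices all lie outside $V_0$, and record enough combinatorial data (which path, with what decorations) to make the edge set a disjoint union indexed in a $G_i$--equivariant way.

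First I would make the reduction to the case where the common orbit is \emph{all} of $V(\Gamma)$. Pick a connected fundamental domain for the $G_1$--action on $\Gamma$; since $G_1$ acts cocompactly and freely on vertices, there are finitely many vertex orbits $O_0 = V_0, O_1, \dots, O_r$, and finitely many edge orbits. Collapse a $G_1$--equivariant spanning forest that contracts each vertex of $\bigcup_{j\ge 1} O_j$ into $V_0$; concretely, one chooses for each $j\ge 1$ and each $y$ in a fixed orbit-transversal an edge path from $y$ to a vertex of $V_0$, and translates it around by $G_1$. The same collapse must be performed equivariantly for $G_2$. The key point, and the main obstacle, is that a priori the spanning forest used to contract $O_j$ for $G_1$ need not be $G_2$--invariant, so the resulting quotient graphs might differ. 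The way around this is to observe that we are free to pass to the \emph{vertex set} $V_0$ directly: define $\Gamma'$ to have vertex set $V_0$, and for each pair $x,y \in V_0$ put one edge of $\Gamma'$ for each edge of $\Gamma$ incident to $x$, with the other endpoint of the $\Gamma'$--edge being the vertex of $V_0$ ``nearest'' along that edge — made precise by: run along the edge of $\Gamma$ leaving $x$; if it lands in $V_0$, stop; otherwise this describes an edge of $\Gamma'$ whose label also records the connected component $C$ of $\Gamma \setminus V_0$ entered together with the germ of the edge. This construction is manifestly local at $x$, hence $G_i$--equivariant for \emph{both} $i$, since $G_i$ acts on $\Gamma$ and preserves $V_0$. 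One must check $\Gamma'$ is connected (it is, because $\Gamma$ is connected and every vertex of $\Gamma$ lies within bounded distance of $V_0$ by cocompactness, so any two vertices of $V_0$ are joined by a path that can be pushed onto $V_0$), and that each vertex of $\Gamma'$ has finite valence (from cocompactness + local finiteness, or from freeness of the actions and finitely many orbits).

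Next I would produce the generating sets. Since $G_i$ acts freely and transitively on $V(\Gamma') = V_0$, for each $G_i$--orbit of oriented edges $e$ of $\Gamma'$ there is a unique $s \in G_i$ with $e$ in the orbit of the edge from $x_0$ to $s x_0$; let $S_i \subset G_i$ be the (finite) set of all such $s$. Because the involution $e \mapsto \bar e$ is $G_i$--equivariant, $S_i$ is symmetric; because $\Gamma'$ has no loops at $x_0$ (an edge of $\Gamma'$ from $x$ to $x$ would force, via freeness, a nontrivial element fixing $x$ — impossible, as the corresponding ``short path'' in $\Gamma$ from $x$ to $x$ through a component $C$ either is trivial or yields a fixed vertex), we have $1 \notin S_i$; and $S_i$ generates $G_i$ because $\Gamma'$ is connected. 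Finally, the standard orbit argument shows $\Gamma'$, as an \emph{unlabeled} graph, is isomorphic to $\Cay(G_i, S_i)$: map the vertex $g x_0$ to $g \in G_i$, and map each edge of $\Gamma'$ to the Cayley edge determined by which $G_i$--orbit it lies in and which endpoint is its initial vertex; equivariance plus freeness make this a bijection on vertices and on edges preserving incidence. Since this identification works for $i=1$ and $i=2$ with the \emph{same} underlying graph $\Gamma'$, we get $\Cay(G_1,S_1) \cong \Gamma' \cong \Cay(G_2,S_2)$ as unlabeled graphs, which is the claim. The main delicate point throughout is the equivariance of the contraction $\Gamma \rightsquigarrow \Gamma'$ for both group actions simultaneously; handling it by a construction that is purely local at vertices of $V_0$ — rather than by choosing a spanning forest — is what makes it go through.
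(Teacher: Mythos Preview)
Your overall strategy matches the paper's exactly: build an auxiliary graph with vertex set $V_0$ on which both $G_1$ and $G_2$ act freely and transitively, then read off generating sets. The gap is in your construction of $\Gamma'$. Your first description---edges of $\Gamma'$ are paths in $\Gamma$ between points of $V_0$ whose interior vertices avoid $V_0$---need not give a locally finite graph. For a concrete failure, take $\Gamma$ to be the ladder $\Z \times \{0,1\}$ (two horizontal lines with vertical rungs) with $\Z$ acting by translation, and $V_0 = \Z \times \{0\}$. Then for every $n$ there is a path $(0,0) \to (0,1) \to (1,1) \to \dotsb \to (n,1) \to (n,0)$ with interior entirely outside $V_0$, so $(0,0)$ has infinite valence in $\Gamma'$. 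Your second description (``one edge of $\Gamma'$ for each edge of $\Gamma$ incident to $x$, with the other endpoint the vertex of $V_0$ nearest along that edge'') is not actually a definition: when the edge leaves $V_0$, you never say which vertex of $V_0$ is the terminal vertex---a component of $\Gamma \setminus V_0$ can meet $V_0$ in many places, and ``the germ'' does not pick one out.

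The paper's fix is to use \emph{shortest} connecting paths: let $P$ be the set of paths $\alpha \cdot e \cdot \beta$ where $e \in E(\Gamma)$ and $\alpha$, $\beta$ are shortest paths from $V_0$ to $\partial_0 e$ and from $\partial_1 e$ to $V_0$ respectively. Every such path has length at most $2C+1$ (where $C$ bounds the distance from any vertex to $V_0$, using cocompactness), so the resulting graph is locally finite. The key equivariance point---which you correctly identified as the crux---is handled because ``shortest path from $V_0$'' is a purely metric notion, preserved by any group acting on $\Gamma$ and stabilising $V_0$. Two smaller issues: your argument that $\Gamma'$ has no loops (``would force a nontrivial element fixing $x$'') is incorrect, since a closed path at $x$ requires no group element at all; and you do not address multi-edges. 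The paper simply deletes loops and duplicate edges before identifying the result with a Cayley graph.
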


Having a vertex orbit in common is important. For instance, two groups
may act on the same graph, with the same number of vertex orbits, and
that is not enough. In the group $G = \Z \times \Z/2$ with any Cayley
graph $\Gamma$, the subgroups $\Z \times \{0\}$ and $2\Z \times \Z/2$
both act on $\Gamma$ with two vertex orbits, but they do not admit
isomorphic Cayley graphs, by \cite{hainkescheele} (see also
\cite{loh}). 

\begin{proof}
The assumptions imply that $\Gamma$ is locally finite with bounded
valence. Also, there is a vertex $v_0 \in V(\Gamma)$ such that $G_1 v_0
= G_2 v_0$. Let $V_0$ denote this vertex orbit. Because the action of
$G_1$ is cocompact, there is a number $C$ such that every vertex of
$\Gamma$ has distance at most $C$ from $V_0$. 

Now let $P$ be the set of paths in $\Gamma$ of the form $\alpha \cdot
e \cdot \beta$ where 
\begin{itemize}
\item $e \in E(\Gamma)$
\item $\alpha$ is a shortest path in $\Gamma$ from $V_0$
  to $\partial_0 e$
\item $\beta$ is a shortest path in $\Gamma$ from $\partial_1 e$ to
  $V_0$. 
\end{itemize}
We build a new graph $\Delta'$ as follows: $V(\Delta')$ is the set
$V_0$ and $E(\Delta')$ is $P$. That is, each $p\in P$ has initial and
terminal endpoints $\partial_0 p$ and $\partial_1 p$ in $V_0$, which
defines an edge in $\Delta'$. (Note that $P$ is closed under the
involution $p \mapsto \overline{p}$.) Finally, define $\Delta$ from
$\Delta'$ by eliminating all loops and duplicate edges, if any exist. 

We claim that $\Delta$ is connected. Suppose $\gamma = (e_1, \dotsc,
e_n)$ is any path in $\Gamma$ with endpoints in $V_0$. For each $i$
let $\alpha_i$ be a shortest path from $V_0$ to $\partial_0 e_i$. Then
$(e_1 \cdot \overline{\alpha}_2) \cdot (\alpha_2 \cdot e_2 \cdot
\overline{\alpha}_3) \dotsm (\alpha_n \cdot e_n)$ is a concatenation
of paths in $P$ from $\partial_0 \gamma$ to $\partial_1 \gamma$. 

It is immediate that both $G_1$ and $G_2$ act on $\Delta$, freely and
transitively on the vertices. Next let 
\[S_i \  = \ \{\, g \in G_i \mid d_{\Delta}(v_0, gv_0) = 1\,\}\] 
for $i = 1,2$. These sets are finite because $d_{\Delta}(v_0, gv_0) =
1$ implies $d_{\Gamma}(v_0, gv_0) \leq 2C+1$ and $\Gamma$ is locally
finite. Finally, $\Cay(G_i, S_i) \cong \Delta$ since every path in $P$
has a unique translate under the action of $G_i$ with initial vertex
$v_0$. That is, every edge of $\Delta $ has a unique $G_i$--translate
which is an edge from $v_0$ to $s v_0$ for some $s \in S_i$. 
\end{proof}

\begin{corollary}\label{cayleycor}
The lattices $G_1, G_2, G_3, G_4 < \Aut(X_{k,kn})$ admit isomorphic Cayley
graphs. 
\end{corollary}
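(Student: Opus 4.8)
The plan is to deduce this from Proposition~\ref{cayleyprop}, applied with $\Gamma=X_{k,kn}^{(1)}$, the $1$--skeleton of $X_{k,kn}$. Observe first that the construction carried out in the proof of that proposition manufactures a single graph $\Delta$ out of $\Gamma$ and a chosen vertex orbit $V_0\subset V(\Gamma)$, and then produces, for each group $G$ acting on $\Gamma$ cocompactly and freely on the vertices with vertex orbit $V_0$, a finite generating set $S$ with $\Cay(G,S)\cong\Delta$. Hence it suffices to exhibit one subset $V_0\subset V(X_{k,kn})$ that is simultaneously a vertex orbit for all four lattices $G_1,G_2,G_3,G_4$. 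Cocompactness of the $G_i$--actions is clear since the $G_i$ are uniform lattices, and each $G_i$ acts freely on $V(X_{k,kn})$ because it is a torsion-free discrete subgroup of $\Aut(X_{k,kn})$ and therefore has trivial cell stabilizers. So the whole matter reduces to the coincidence of the vertex orbits of $G_1,\dots,G_4$.

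To handle that, I would first describe how each $G_i$ partitions $V(X_{k,kn})$. Each $G_i$ arises via Proposition~\ref{latticeprop} from a labeled graph $(A_i,\lambda_i)$, with the identification $X_{(A_i,\lambda_i)}\cong X_{k,kn}$ using the cell structure in which every vertex circle carries a single vertex; thus $X_{k,kn}/G_i$ is the fibered $2$--complex $Z_{(A_i,\lambda_i)}$, whose leaf space $A_i$ is naturally identified with $T_{k,kn}/\pi_*(G_i)$ (here $\pi_*$ is injective, as $k\neq kn$). Because each vertex circle $C_v$ has just one vertex, every vertex of a branching line $\pi^{-1}(w)$ maps to that single vertex under $X_{k,kn}\to X_{k,kn}/G_i$; consequently the $G_i$--orbit of a vertex $x$ is exactly $\pi^{-1}\bigl(\pi_*(G_i)\!\cdot\!\pi(x)\bigr)\cap V(X_{k,kn})$. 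In other words, the $G_i$--orbits of vertices of $X_{k,kn}$ are precisely the sets $\pi^{-1}(O)\cap V(X_{k,kn})$ with $O$ a $\pi_*(G_i)$--orbit of vertices of $T_{k,kn}$, equivalently a vertex of $A_i$.

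The key observation is then almost a triviality. Every one of the graphs $A_1,\dots,A_4$ has exactly two vertices and no loops, so each of its edges runs between its two vertices; this means the two $\pi_*(G_i)$--orbits on $V(T_{k,kn})$ are the two colour classes of a proper $2$--colouring of the tree $T_{k,kn}$. Since a tree admits a unique proper $2$--colouring up to swapping colours, this partition of $V(T_{k,kn})$ does not depend on $i$, and therefore --- pulling it back along $\pi$ --- neither does the partition of $V(X_{k,kn})$ into $G_i$--orbits. Taking $V_0$ to be either block of this common partition and feeding it into the construction from the proof of Proposition~\ref{cayleyprop} then yields a graph $\Delta$ and generating sets $S_i\subset G_i$ with $\Cay(G_i,S_i)\cong\Delta$ for all $i$, which is the assertion of Corollary~\ref{cayleycor}. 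The step I expect to require the most care is the one in the previous paragraph: verifying that the $G_i$--orbits on $X_{k,kn}$ are the $\pi$--preimages of the $\pi_*(G_i)$--orbits on $T_{k,kn}$, which is exactly where the choice of one--vertex vertex circles in Proposition~\ref{latticeprop} enters; once this is established, the loop-free bipartiteness of the graphs $A_i$ finishes the proof.
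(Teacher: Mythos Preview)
Your proposal is correct and follows essentially the same approach as the paper: both arguments note that the labeled graphs $A_i$ are bipartite on two vertices, lift the resulting $2$--colouring of $T_{k,kn}$ through $\pi$ to $X_{k,kn}$, observe that the colour classes are precisely the $G_i$--vertex orbits, and then invoke Proposition~\ref{cayleyprop}. Your explicit remark that the graph $\Delta$ built in the proof of Proposition~\ref{cayleyprop} depends only on $\Gamma$ and $V_0$ (and not on the particular group) is a worthwhile addition, since it is what legitimises applying a statement phrased for two groups to all four $G_i$ simultaneously.
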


\begin{proof}
Recall that the labeled graphs defining $G_i$ have two vertices, black
and white. Lifting this coloring to the common Bass--Serre tree
$T_{k,kn}$ we get a bipartite vertex coloring. Now lift this vertex
coloring to the vertices of $X_{k,kn}$ using $\pi$. Each branching
line has only vertices of one color, and every strip has opposite
vertex colors on its two sides. Now note that the white
vertices and the black vertices are exactly the vertex orbits under
any of the group actions. Hence Proposition \ref{cayleyprop} applies,
with $\Gamma$ the $1$--skeleton of $X_{k,kn}$.  
\end{proof}

\section{Questions}\label{questions}

The main question is the following:

\begin{question}
  For which pairs $(m,n)$ does $\Aut(X_{m,n})$ contain incommensurable
  lattices? 
\end{question}
The main open cases are the two cases of $\Aut(X_{k,kn})$ from Section
\ref{lastcase} and the case when $\gcd(m,n) \not= 1$, $m \nmid n$, and
$n \nmid m$. 

\begin{question}
  What are the uniform lattices in $\Aut(X_{m,n})$ with torsion? 
  Are there uniform lattices that are not virtually torsion-free? 
\end{question}

%
For any $(A,\lambda)$ there is a straightforward construction of a
lattice in $\Aut(X_{(A,\lambda)})$ with $2$--torsion, via a 
graph of infinite dihedral groups. These examples contain torsion-free
subgroups of index $2$.
It would be
much more interesting to find lattices with higher-order torsion. 

If $T$ is a locally finite tree then every uniform lattice in
$\Aut(T)$ is virtually torsion-free, by \cite{KPS}. On the other hand,
Hughes recently constructed a uniform lattice in $\Aut(T_1) \times
\Aut(T_2)$ that is not virtually torsion-free \cite{hughes}. The
reasoning used in \cite{hughes} seems unlikely to apply in our
setting, as it depends on the existence of a simple uniform lattice.

\begin{question}
  Does $\Aut(X_{m,n})$ contain non-uniform lattices? 
\end{question}

Carbone \cite{carbone} has shown that $\Aut(T)$ contains non-uniform lattices, for
any uniform locally finite tree $T$. R\'{e}my \cite{remy} has shown
that certain Kac-Moody groups over finite fields are irreducible
non-uniform lattices in $\Aut(T \times T)$. 


Note that $\Aut(X_{m,m}) \cong \Aut(T_{2m}) \times D_{\infty}$ for $m
> 1$, so it contains reducible non-uniform lattices, by
\cite{carbone}. Here $T_{2m}$ is the \emph{undirected} regular tree of
valence $2m$. 
Hence we are really asking about \emph{irreducible} non-uniform
lattices if $m=n$.

\begin{question}
  When do $\Aut(X_{m,n})$ and $\Aut(X_{m',n'})$ contain isomorphic
  lattices? 
\end{question}

We have seen that $\Aut(X_{k,k^2})$ and $\Aut(X_{k^2,k^4})$ contain
isomorphic lattices (namely, the finite-index subgroup from Figure
\ref{easycase}), even though their ``standard'' lattices $BS(k,k^2)$
and $BS(k^2,k^4)$ are not commensurable. This failure of rigidity for
lattices in $\Aut(X_{m,n})$ is intriguing. 

Finally, all of the above can be studied for the more general groups
$\Aut(X_{(A,\lambda)})$.


\providecommand{\bysame}{\leavevmode\hbox to3em{\hrulefill}\thinspace}
\providecommand{\MR}{\relax\ifhmode\unskip\space\fi MR }
\providecommand{\MRhref}[2]{%
  \href{http://www.ams.org/mathscinet-getitem?mr=#1}{#2}
}
\providecommand{\href}[2]{#2}

\end{document}